 \newtheorem{theorem}{Theorem}[section]
\newtheorem{lemma}[theorem]{Lemma}
\newtheorem{pro}[theorem]{Proposition}
\theoremstyle{definition}
\newtheorem{definition}[theorem]{Definition}
\theoremstyle{remark}
\newtheorem{remark}[theorem]{Remark}
\numberwithin{equation}{section}
\def\be {\begin{equation}}
\def\ee {\end{equation}}
\def\ba {\begin{eqnarray}}
\def\ea {\end{eqnarray}}
\newcommand{\f}{\frac}
\newcommand{\R}{{\mathbb R}}
\newcommand{\N}{{\mathbb N}}
\newcommand{\ds}{\displaystyle}
\begin{document}
\title{\Large\bf{Formulation of the normal forms of Turing-Hopf bifurcation in reaction-diffusion systems with time delay\thanks{Supported by the National Natural Science Foundation of China (No.11371112), US-NSF grant DMS-1715651.}}}
\author{{Weihua Jiang\thanks {Corresponding author. E-mail address: jiangwh@hit.edu.cn}, \; Qi An}\\\footnotesize {\em Department of Mathematics, Harbin Institute of Technology, Harbin 150001, P.R.China }\\
{Junping Shi}\\\footnotesize {\em Department of Mathematics, College of William and Mary, Williamsburg, Virginia, 23187-8795, USA}
            }

\date{}
	\maketitle
	\baselineskip=0.9\normalbaselineskip \vspace{-3pt}
	
\begin{abstract}
The normal forms up to the third order for a Hopf-steady state bifurcation of a general system of partial functional differential equations (PFDEs) is derived based on the center manifold and normal form theory of PFDEs. This is a codimension-two degenerate bifurcation with the characteristic equation having a pair of simple purely imaginary roots and a simple zero root, and the corresponding eigenfunctions may be spatially inhomogeneous. The PFDEs are reduced to a three-dimensional system of ordinary differential equations and precise dynamics near bifurcation point can be revealed by  two unfolding parameters. The normal forms are explicitly written as functions of the Fr\'echet derivatives up to the third orders and characteristic functions of the original PFDEs,  and they are presented in a concise matrix notation, which greatly eases the applications to the original PFDEs and  is convenient for computer implementation. This provides a user-friendly approach of showing the existence and stability of patterned stationary and time-periodic solutions with spatial heterogeneity when the parameters are near a Turing-Hopf bifurcation point, and it can also be applied to reaction-diffusion systems without delay and the retarded functional differential equations without diffusion.
		
\noindent
{\small {\bf Keywords:} Reaction-diffusion equations with time-delay; Hopf-steady state bifurcation; Turing-Hopf bifurcation; Normal form}
\end{abstract}
	
\section{Introduction}

In a dynamical mathematical model, the asymptotic behavior of the system often changes when some parameter  moves across certain threshold values and such phenomenon is called a bifurcation. The method of the normal forms is a standard and effective tool to analyze and simplify  bifurcation problems, see \cite{Chow1982,Guck1983,Wigg1990}.  The main idea is to transform the differential equations to a topologically conjugate normal form near the singularity. For ordinary differential equations (ODEs), the methods of computing the normal forms have been developed in, for example, \cite{Brjuno,Chow1982,Chow1994,Golubitsky1985,Guck1983},  and for functional differential equations (FDEs), similar methods have also been developed in, for example, \cite{Hale1977,Faria1,Faria2}. In FDEs, due to the effect of time-delays, Hopf bifurcations occur more frequently which destabilize a stable equilibrium and produce temporal oscillatory patterns \cite{Hale1977}.  Often a center manifold reduction reduces a higher dimensional problem to a lower dimensional one, and the normal form can be computed on the center manifold \cite{Carr1981,Hassard1981,Marsden1976,Vanderbauwhede1989}. 
The method  proposed in \cite{Faria1, Faria2} has been applied to the bifurcation problems in ODEs or FDEs, such as codimension-one Hopf bifurcation, and codimension-two Hopf-zero bifurcation and Bogdanov-Takens bifurcation etc., see    \cite{XiaoR2001,JiangYuan2007,CampYuan2008,JiangW2010,WangJ2010,YuanRJiang2015,NiuBJiang2013,WangC2008}.

The methods of center manifolds and normal forms have also been extended to many partial differential equations (PDEs). For example, the existence of center manifolds or other invariant manifolds for semilinear parabolic equations have been proved in, for example,  \cite{BJ1989,BLZ1998,ChowLu1988,Henry}. For parabolic equations with time-delay or functional partial differential equations (FPDEs), the existence and smoothness of center manifolds have also been established in \cite{Faria2000,Faria2001,Faria2002,Lin1992,Wu2004,Wu1996,Wu1998}.  In particular, the calculation of  normal form on the centre manifold of FPDEs was provided in \cite{Faria2000,Faria2001}. These theories can be applied to reaction-diffusion systems (with or without time-delays) which appear in many applications from physics, chemistry and biology. For example, the existence of Hopf bifurcations and associated stability switches have been considered in many recent work \cite{Guo2016,Hadeler2007,HuangW1996,Yi2009,Su2009,Yi2010,Su2010,YuanXP2010,Zuo2011,ChenS2012,YiFQ2013,Guo2015}. More recently with the integrated semigroup theory,  the center manifold and normal form theory for semilinear equations with non-dense domain have  also been developed \cite{Ruan2009,Liu2011,Liu2014,Liu2016}.

An important application of normal form theory for PDEs and FPDEs is the formation and bifurcation of spatiotemporal patterns in reaction-diffusion systems (with delays) from various physical, chemical and biological models. In the pioneer work or Turing \cite{Turing1952}, it was shown that  diffusion could destabilize an otherwise stable spatially homogeneous equilibrium of a reaction-diffusion system, which leads to the spontaneous formation of spatially inhomogeneous pattern. This phenomenon is often called the Turing instability or diffusion-driven instability, and associated Turing bifurcation could lead to spatially inhomogeneous steady states \cite{kondo2010reaction,Maini2017,Shi2009}. Such Turing type pattern formation mechanisms have been verified in several recent chemical or biological studies \cite{KondoAsai1995,lengyel1991modeling,muller2012differential,sheth2012hox}.

In many reaction-diffusion models, temporal oscillation caused by Hopf bifurcation and  spatial patterns from Turing mechanism can occur simultaneously to produce Turing-Hopf patterns which oscillate in both space and time \cite{Baurmann2007,DeWit1997,Ricard2009,Rovinsky1992,Maini1997}. Mathematically the  complex spatiotemporal Turing-Hopf patterns involves the interaction of the dynamical properties of  two Fourier modes, and it can be analyzed through unfolding a codimension-two Turing-Hopf bifurcation, see \cite{Holmes1997,SongYZou2014,Ruan2015,SongY2016} and references therein. It is also a Hopf-zero bifurcation with the zero eigenvalue corresponding to a spatially inhomogeneous eigenfunction.

The aim of the present paper is  to provide  the computation of the normal form up to the third order at a known steady state solution for a reaction-diffusion system with time-delay. This normal form can be used to unfold the complex spatiotemporal dynamics near a  Turing-Hopf bifurcation point with one of the unfolding parameter being the time-delay. We follow the framework of Faria \cite{Faria2000,Faria2002} to reduce the general PFDEs with perturbation parameters to a three-dimensional systems of ODEs up to third order, restricted on the local center manifold near a Hopf-steady state type of singularity, and the unfolding parameters can be expressed by those original perturbation parameters. Usually the third order normal form is sufficient for analyzing the bifurcation phenomena in most of the applications. The reduced three-dimensional ODE system  can be further transformed to a two-dimensional amplitude system and the bifurcation analysis can be carried out following \cite{Guck1983} to provide precise dynamical behavior of the system using the two-dimensional unfolding parameters. Furthermore we give an explicit formula of the coefficients in the truncated normal form up to third order for the Hopf-steady state bifurcation of delayed reaction-diffusion equations with Neumann boundary condition, which includes the important application to the Turing-Hopf bifurcation.

Our approach in this paper has several new features compared to previous work. First our basic setup of PFDE systems  follows the assumptions ($H1$)-($H4$) in \cite{Faria2000} with slight changes to fit our situation, but we remove the more restrictive assumption ($H5$) which was used in  \cite{Faria2000}. Hence our computation of normal forms can be applied to more general situations. Secondly the normal form formulas here are directly expressed  by the Fr\'echet derivatives up to the third orders and characteristic functions of the original PFDEs, not the reduced ODEs. Hence one can apply our results directly to the original PFDEs without the reduction steps. Also our formulas of the normal form are presented in a concise matrix notation which also eases the applications. Thirdly we neglect the higher order ($\geq 2$) dependence of the perturbation parameters on the system, as in practical application, the influence of the small perturbation parameter on the dynamics of the system is mainly  linear. This again simplifies the normal form but still fulfills the need in most applications. Finally we remark that the normal form formulas developed in this paper for PFDEs are also applicable to the general reaction-diffusion equations without delay (PDEs) and  the delay differential equations without diffusion (FDEs) with obvious adaption, and the unfolding parameters are not necessarily the time-delay or diffusion coefficients.

Because the coefficients of  computed normal form can be explicitly expressed using the information from the original system, our algorithm enables us to draw conclusions on the impact of original system parameters on the dynamical behavior near the Turing-Hopf singularity. To illustrate our normal form computation and application algorithm, we apply our methods to the Turing-Hopf bifurcation in a diffusive Schnakenberg type chemical reaction system with gene expression time delay proposed in \cite{SER}. Turing and Hopf bifurcations for this system have been considered in \cite{Chen2013JNS,YGLM}, and Turing-Hopf bifurcation for the system in a different set of parameters was recently considered in \cite{JWC2017}.

The rest of the paper is organized as follows. In Section 2, the framework of  the system of PFDEs at a Hopf-steady state singularity and the  phase space  decomposition are given,  and the reduction of the original equations to a three-dimensional ODE system is introduced. The formulas of normal form up to third order are presented in Section 3 while the proof is postponed to Section \ref{proof}. In Section 4, the precise formulas of the normal forms with the Neumann boundary condition and the spatial domain $\Omega=(0,l\pi)$ are given. The application of abstract formulas to the example of diffusive Schnakenberg system with gene expression time delay  is shown in Section 5, and some concluding remarks are given in Section 7.

\section{Reduction based on phase space decomposition}
In this section, we discuss the reduction and the normal forms for a system of PFDEs subject to homogeneous Neumann or Dirichlet boundary conditions at a  Hopf-steady state singularity  with original perturbation parameters following the methods in \cite{Faria1,Faria2}. We will show that the system of PFDEs  can be reduced to a three-dimensional system of  ordinary differential equations defined on its center manifold.

Assume that $\Omega$ is a bounded open subset of $\mathbb{R}^n$ with smooth boundary, and $X$ is a Hilbert space of complex-valued functions defined on $\bar{\Omega}$ with inner product $\langle\cdot,\cdot\rangle$. Denote by $\mathbb{N}_B$  the set of nonnegative integers or positive integers,  depending on the boundary condition:
$$
\mathbb{N}_B=\left\{
\begin{array}{ll}
\mathbb{N}\cup\{0\},& \mathrm{for~ homogeneous~ Neumann~
	boundary ~conditions},\\
\mathbb{N},& \mathrm{for~ homogeneous~ Dirichlet~
	boundary ~conditions}.
\end{array}
\right.
$$
Let $\{\mu_k:k\in \mathbb{N}_B \}$ be the set of eigenvalues of $-\Delta$ on $\Omega$ subject to homogeneous Neumann  or Dirichlet boundary conditions, that is
$$\Delta\beta_k+\mu_k\beta_k=0,\;\;x\in \Omega, \;\; \frac{\partial u}{\partial n}=0 \; \text{ or } \; u=0, \;\; x\in \partial\Omega.$$
Then we have
\begin{equation*}
  \begin{split}
     &0=\mu_0<\mu_1\leq \cdots\leq\mu_k\leq \cdots\rightarrow \infty,~\mathrm{for~Neumann
	~boundary~conditions, or} \\
      &0<\mu_1<\mu_2\leq \cdots\leq\mu_k\leq \cdots\rightarrow \infty,~\mathrm{for~Dirichlet
	~boundary~conditions},
  \end{split}
\end{equation*}
and the corresponding eigenfunctions $\{\beta_k:k\in \mathbb{N}_B\}$  form an orthonormal basis of $X$. Fixing $m\in {\mathbb N}$ and $r>0$, define $\mathcal{C}=C([-r,0];X^m)\ (r>0)$ to be the Banach space of
	continuous maps from $[-r,0]$ to $X^m$ with the sup norm.
We consider an abstract PFDE with parameters in the phase space $\mathcal{C}$ defined as
\begin{equation}\label{eq301}
\dot{u}(t) = D(\alpha)\Delta u(t)+L(\alpha) u_t + G(u_t,\alpha),
\end{equation}
where $u_t\in \mathcal{C}$ is defined by $u_t(\theta)=u(t+\theta)$ for $-r\leq\theta\leq 0$, $D(\alpha)=\mathrm{diag}(d_1(\alpha),d_2(\alpha),\ldots,d_m(\alpha))$ with  $d_i(0)>0$ for $1\le i\le m$; the domain of $\Delta u(t)$ is defined by $dom(\Delta)=Y^m \subseteq X^m$ where $Y$ is defined as
 \begin{equation*}
   \begin{split}
      &Y=\ds\left\{u\in W^{2,2}(\Omega): \frac{\partial u}{\partial n}=0,\; x\in \partial\Omega\right\}~\mathrm{for~Neumann
	~boundary~conditions, or} \\
       &Y=\ds\left\{u\in W^{2,2}(\Omega): u=0,\; x\in \partial\Omega\right\}~\mathrm{for~Dirichlet
	~boundary~conditions};
   \end{split}
 \end{equation*}
the parameter vector $~\alpha=(\alpha_1, \alpha_2)$ is in a neighborhood $V\subset \R^2$ of $(0,0)$,
$L:~V\rightarrow L(\mathcal{C},\R^m)$ (the set of linear mappings) is $C^1$ smooth, $G: \mathcal{C}\times V\rightarrow
	\R^m$ is $C^k$ smooth for $k\geq 3$,  $G(0,0)=0$, and the Jacobian matrix $D_{\varphi}G(0,0)=0$ with $\varphi\in \mathcal{C}$.
	
Let $L_0=L(0)$ and $ D_0=D(0)$. Then the linearized equation about the zero equilibrium  of  \eqref{eq301} can be written as
\begin{equation}\label{eq302}
\dot{u}(t) = D_0\Delta u(t)+L_0 u_t.
\end{equation}
We impose the following hypotheses (similar to \cite{Faria2000}):

\begin{enumerate}
  \item[{\bf (H1)}] $D_0\Delta$ generates a $C_0$ semigroup $\{T(t)\}_{t\geq 0}$ on $X^m$ with $|T(t)| \leq Me^{\omega t}$ for all $t \geq 0$, where  $M \geq 1$, $\omega \in \mathbb{R}$, and $T(t)$ is a compact operator for each $t > 0$;
  \item[{\bf (H2)}] $L_0$ can be extended to a bounded linear operator from $B\mathcal{C}$ to $X^m$, where
	$B\mathcal{C} = \{\psi \in {\mathcal C}: \ds\lim_{\theta \rightarrow 0^-} \psi(\theta) \text{ exists} \}$ with the
	sup norm.
  \item[{\bf (H3)}] the subspaces $\mathcal{B}_k=\{\langle v(\cdot),\beta_k \rangle \beta_k: v\in \mathcal{C}\}\subset\mathcal{C}$ ($k\in \N_B$) satisfy
$L_0(\mathcal{B}_k)\subseteq span \{e_i\beta_k:1\le i\le m\}$,
where $\{e_i: 1\le i\le m\}$ is the canonical basis of $\R^m$, and
$$\langle v,\beta_k \rangle=(\langle v_1,\beta_k \rangle,~\langle v_2,\beta_k \rangle,~\ldots,~\langle v_m, \beta_k \rangle)^{\mathrm{T}},~k\in \mathbb{N}_B,
~\mathrm{for}~v=(v_1,v_2,\ldots,v_m)^{\mathrm{T}}\in\mathcal{C}.$$
\end{enumerate}
	
Let $A$ be the infinitesimal generator associated with the semiflow of the linearized equation \eqref{eq302}. It is known that $A$ is given by
$$(A\phi)(\theta)=\dot{\phi}(\theta),~dom(A)=\{\phi\in\mathcal{C}:\dot{\phi}\in\mathcal{C},\phi(0)\in dom(\Delta),\dot{\phi}(0)=D_0\Delta\phi(0)+L_0\phi\},$$
and the spectrum $\sigma(A)$ of $A$ coincides with its point spectrum $\sigma_P(A)$. Moreover $\lambda\in\mathbb{C}$ is in $\sigma_P(A)$ if and only if there exists $y\in dom(\Delta)\setminus \{0\}$ such that $\lambda$ satisfies
\begin{equation}\label{eq304}
\triangle(\lambda)y = 0, \ \ \mbox{with}\ \ \triangle(\lambda)=\lambda
I-D_0\Delta-L_0(e^{\lambda \cdot}I).
\end{equation}
	
	
	
By using the decomposition of $X$ by $\{\beta_k\}_{k\in \mathbb{N}_B}$ and $\mathcal{B}_k$, the equation $\triangle(\lambda)y = 0$, for some $y\in dom(\Delta)\setminus \{0\}$, is equivalent to
a sequence of characteristic equations
\begin{equation}\label{eq304+1}
\mathrm{det}\triangle_k(\lambda)=0, \ \ \mbox{with}\ \
\triangle_k(\lambda)=\lambda I-\mu_k D_0-L_0(e^{\lambda \cdot}I), \;\; k\in \mathbb{N}_B.
\end{equation}
Here $L_0:C \rightarrow \mathbb{C}^m $ where $C\triangleq C([-r,0];\mathbb {C}^m)$. Then for any $k\in \mathbb{N}_B$, on $\mathcal{B}_k$, the linear equation \eqref{eq302} is equivalent to a Functional Differential Equation (FDE)
on $\mathbb{C}^m$:
\begin{equation}\label{eq302+}
\dot{z}(t) = -\mu_k D_0z(t)+L_0 z_t,
\end{equation}
with characteristic equation \eqref{eq304+1},  where $z_t(\cdot)=\langle u_t(\cdot),\beta_k\rangle \in C$. For any $k\in \mathbb{N}_B$, we also denote by $\eta_k\in BV([-r,0],\mathbb{C}^m)$ to be the $m\times m$ matrix-valued function of bounded
variation defined on $[-r,0]$ such that
\begin{equation}\label{eq303}
-\mu_kD_0 \psi(0)+L_0\psi = \int_{-r}^0 \mathrm{d}\eta_k(\theta)\psi(\theta),\;\; ~\psi\in C.
\end{equation}
The adjoint bilinear form on $C^\ast\times C$, where $C^\ast \triangleq C([0,r];~ \mathbb{C}^{m\ast})$, is defined by
\begin{equation}\label{eq5}
(\psi,\ \varphi)_{k}=\psi(0)\varphi(0)-\int^0_{-r}\int^\theta_0\psi(\xi-\theta)\mathrm{d}\eta_{k}(\theta)\varphi(\xi)\mathrm{d}\xi,\ \ \psi\in C^\ast,\ \varphi\in C.
	\end{equation}

We make the following basic assumption on a Hopf-steady state bifurcation point:
\begin{enumerate}
\item[{\bf (H4)}] There exists a neighborhood $V\subset {\mathbb R}^2$ of zero such that for $\alpha := (\alpha_1,\alpha_2) \in V$, the characteristic equation \eqref{eq304+1}  with $k=k_1\in \N_B$ has a simple real
    eigenvalue $\gamma(\alpha)$ with  $\gamma(0)=0$, $\ds\frac{\partial\gamma}{\partial \alpha_2}(0)\neq 0$,
    and \eqref{eq304+1} with $k=k_2\in \N_B$ has a pair of simple complex  conjugate eigenvalues $\nu(\alpha)\pm \mathrm{i}\omega (\alpha)$ with $\nu(0)=0$, $\omega(0)=\omega_0>0$, $\ds\frac{\partial \nu}{\partial \alpha_1}(0)\neq 0$, all other eigenvalues of \eqref{eq304} have non-zero real part for $\alpha \in V$.
\end{enumerate}

\begin{definition}\label{def2.1}
We say that a $(k_1,k_2)-$mode  Hopf-steady state bifurcation occurs for \eqref{eq301} near the trivial equilibrium at $\alpha=(0,0)$ if assumptions {\bf (H1)}-{\bf (H4)} are satisfied, or briefly,  a Hopf-steady state bifurcation occurs. Moreover, if $k_1\neq 0$, we say that a $(k_1,k_2)-$mode  Turing-Hopf bifurcation occurs, or briefly, a Turing-Hopf bifurcation occurs.
\end{definition}

Let $\Lambda_1=\{0\},~\Lambda_2=\{\pm \mathrm{i} \omega_0\},$ and $\Lambda=\Lambda_1\cup\Lambda_2$.  Then the phase space $C$ is decomposed by $\Lambda_i$:
$$C = P_i\oplus Q_i,$$
where
$Q_i = \{\varphi \in C : (\psi, \varphi )_{k_i}= 0,~\text{for all}~\psi \in P_i^*\},~i=1,2$.
We choose the basis
\begin{equation}\label{eq5+3}
\Phi_1=\phi_1,~\Psi_1=\psi_1,~\Phi_2=(\phi_2,\bar{\phi_2}),~\Psi_2=\left(
\begin{array}{c}
\psi_2\\
\bar{\psi_2}
\end{array}\right)
\end{equation}
in $P_1,~P_1^\ast,~P_2,~P_2^\ast$ respectively, such that $(\Psi_i,~\Phi_i)_{k_i}=I,~i=1,2,$ ($I$ is the identity matrix), and 
	$$
	\dot{\Phi }_i=\Phi_i B_i ~~~~\textrm{and}~~~~-\dot{\Psi_i }=B_i\Psi_i ,~i=1,2,
	\quad \textrm{with} \quad B_1=0,~B_2=\mathrm{diag}(\mathrm{i}\omega_0,-\mathrm{i}\omega_0)\,.
	$$
	We know from \cite{Hale1977}  that
\begin{equation}\label{eq5+6}
\begin{split}
	\phi_1(\theta)&\equiv\phi_1(0), \;\; \phi_2(\theta)=\phi_2(0)e^{\mathrm{i}\omega_0\theta}, \;\; \theta \in[-r,0],\\
	\psi_1(\theta)&\equiv\psi_1(0), \;\; \psi_2(s)=\psi_2(0)e^{-\mathrm{i}\omega_0s}, \;\; s \in[0,r].
    \end{split}
	\end{equation}
Now we use the definitions above to decompose $\mathcal{C}$ by $\Lambda$:
	 $$\mathcal{C}=\mathcal{P}\oplus\mathcal{Q},~~\mathcal{P}=\textrm{Im}\pi,~~\mathcal{Q}=\textrm{Ker}\pi,$$
	where $\mathrm{dim}\mathcal{P}=3$ and $\pi:\mathcal{C}\rightarrow \mathcal{P}$ is the projection defined by
	\begin{equation}\label{eq5+}
	\pi\phi=\sum_{i=1,2}\Phi_i(\Psi_i,\langle \phi(\cdot),\beta_{k_i}\rangle)_{k_i}\beta_{k_i}\,.
	\end{equation}
	
We project the infinite-dimensional flow on $\mathcal{C}$ to the one on a finite-dimensional manifold $\mathcal{P}$. Following the ideas in
\cite{Faria2000}, we consider the enlarged phase space $B\mathcal{C}$ introduced in ${\bf (H2)}$. This space can be identified as $\mathcal{C}\times X^m,$ with elements  in the form $\phi =\varphi +X_0c$, where $\varphi \in \mathcal{C},~c\in \mathbb{R}^n$,
and $X_0$ is the $m\times m$ matrix-valued function defined by $X_0(\theta)=0$ for $\theta\in [-r,~0)$ and $X_0(0)=I.$
	
	In $B\mathcal{C}$, we consider an extension of the infinitesimal generator, still denoted by $A$,
\begin{equation}\label{eq5+1}
A:\mathcal{C}_0^1\subset B\mathcal{C}\rightarrow B\mathcal{C},~~A\phi=\dot{\phi}+X_0[L_0\phi +D_0\Delta\phi(0)-\dot{\phi}(0)],\end{equation}
defined on $\mathcal{C}_0^1\triangleq \{\phi\in\mathcal{C}:~\dot{\phi}\in\mathcal{C},~\phi(0)\in \text{dom}(\Delta)\}$, and $(\cdot, \cdot)_{k_i}$ can be continuously defined by the same expression \eqref{eq5}, $i=1,2$, on $C^\ast \times BC$, where
$$BC = \left\{\psi : [-r, 0]\rightarrow \mathbb{C}^m |\psi \; \text{ is continuous on }\; [-r, 0), ~ \ds\lim_{\theta \rightarrow 0-} \psi(\theta) \; \text{exists}  \right\}.$$
Thus it is easy to see that $\pi$, as defined in \eqref{eq5+}, is extended to a continuous
projection (which we still denote by $\pi$) $\pi : B\mathcal{C} \rightarrow \mathcal{P}$. In particular, for $c \in X^m$ we have
\begin{equation}\label{eq5++}
\pi(X_0c)=\sum_{i=1,2}\Phi_i\Psi_i(0)\langle c,\beta_{k_i}\rangle\beta_{k_i}\,.
\end{equation}
The projection $\pi$ leads to the topological decomposition
\begin{equation}\label{eq5+2}
B\mathcal{C}=\mathcal{P}\oplus\text{Ker}\pi,
\end{equation}
with the property $\mathcal{Q}\subsetneqq \text{Ker}\pi$.
	
In the space $B\mathcal{C},$ \eqref{eq301} becomes an abstract ODE
\begin{equation}\label{eq6}
\frac{\mathrm{d}v}{\mathrm{d}t} = Av + X_0F(v,\alpha),
\end{equation}
where
\begin{equation}\label{F}
F(v, \alpha )= (L_{\alpha}- L_0)v+(D(\alpha)- D(0))\Delta v(0)  + G(v, \alpha ),
\end{equation}
for $v\in \mathcal{C}$, $\alpha\in V$.
We decompose $v\in\mathcal{C}_0^1$ according to \eqref{eq5+2} by
\begin{equation*}
  v(t)=\phi_1z_1(t)\beta_{k_1}+(\phi_2z_2(t)+\bar{\phi}_2\bar{z}_2(t))\beta_{k_2}+y(t),
\end{equation*}
where $z_i(t)=(\psi_i,\langle v(t)(\cdot),\beta_{k_i}\rangle)_{k_i}$ ($i=1,2$), and $y(t)\in\mathcal{C}_0^1\cap\text{Ker}\pi=\mathcal{C}_0^1\cap\mathcal{Q}\triangleq\mathcal{Q}^1$.
	
Since $\pi$ commutes with $A$ in $\mathcal{C}_0^1$, we see that in $B\mathcal{C}$, the abstract ODE \eqref{eq6} is equivalent to a system of ODEs:
	\begin{equation} \label{eq7}
\begin{split}
	\dot{z}_1  &=  \psi_1 (0)\langle F(\phi_1z_1\beta_{k_1}+(\phi_2z_2+\bar{\phi}_2\bar{z}_2)\beta_{k_2}+y, \alpha ),\beta_{k_1}\rangle , \\
	\dot{z}_2  &=  i\omega_0z_2+\psi_2 (0)\langle F(\phi_1z_1\beta_{k_1}+(\phi_2z_2+\bar{\phi}_2\bar{z}_2)\beta_{k_2}+y, \alpha ),\beta_{k_2}\rangle,\\
	\dot{\bar{z}}_2  &=  -i\omega_0\bar{z}_2+\bar{\psi}_2 (0)\langle F(\phi_1z_1\beta_{k_1}+(\phi_2z_2+\bar{\phi}_2\bar{z}_2)\beta_{k_2}+y, \alpha ),\beta_{k_2}\rangle, \\
	\dfrac{\mathrm{d}}{\mathrm{d}t}y  &= A_{1}y+ (I-\pi )X_0F(\phi_1z_1\beta_{k_1}+(\phi_2z_2+\bar{\phi}_2\bar{z}_2)\beta_{k_2}+y, \alpha),
\end{split}
\end{equation}
for $z=(z_1,z_2,\bar{z}_2)\in P\subset\mathbb{C}^3$,  $y \in \mathcal{Q}^1 \subset \mathrm{Ker}\pi$, where $A_{1}$ is the restriction of $A$ as an operator from $\mathcal{Q}^1$ to the Banach space $\mathrm{Ker}\pi$: $A_1:\mathcal{Q}^1\subset \text{Ker}\pi\rightarrow \text{Ker}\pi,~A_1\phi=A\phi$ for $\phi\in \mathcal{Q}^1$.
	
\section{The formulas of second  and third terms in the normal forms}
		
In this section, we present the formulas of the second order and third order terms of the normal form of \eqref{eq301}, and the proofs will be postponed to  Section \ref{proof}.

 First we neglect the dependence on the higher order ($\geq2$) terms of small parameters
$\alpha_1, \alpha_2$ in the third order terms of  the normal forms of \eqref{eq301}. By doing the Taylor expansion formally for the operator $L(\alpha)$ and diagonal matrix $D(\alpha)$ at $\alpha=0$, we have
\begin{equation}\label{eq6+-}
\begin{split}
\ds L(\alpha)v=&L(0)v+\frac{1}{2}L_1(\alpha)v+\cdots,\;\;\; \text{for}~v\in\mathcal{C},\\
\ds D(\alpha)=&D(0)+\frac{1}{2}D_1(\alpha)+\cdots,
\end{split}
\end{equation}
where $L_1:~V\rightarrow L(\mathcal{C},\mathbb{R}^m)$, and $D_1:V\rightarrow \mathbb{R}^m\times \mathbb{R}^m$ are linear.
As in \cite{Hassard1981}, we write $G$  in \eqref{F} in the form of
\begin{equation}\label{eq6+}
G(v,0)=\frac{1}{2!} Q(v,v)+\frac{1}{3!} C(v,v,v)+O(|v|^4),~v\in\mathcal{C},
\end{equation}
where $Q,C$ are symmetric multilinear forms. For simplicity, we  also write $Q(X,Y)$ as $Q_{XY}$, $Q_XY$ or $Q_YX$, and $C(X,Y,Z)$ as $C_{XYZ}$.

	The formulas of the third order normal form of \eqref{eq301} are given in the following theorem, and the detailed proof of the theorem is provided in Section 6.	
\begin{theorem}\label{g3}
Assume that {\bf (H1)}-{\bf (H4)} are satisfied. Ignoring the effect of the perturbation parameters in high-order items $(\geq 3)$,  then the normal forms of \eqref{eq301} restricted  on the center manifold up to the third order  is
\begin{equation}\label{eq441}
\dot{z}=Bz+\frac{1}{2}g_2^1(z,0,\alpha)+\frac{1}{3!}g_3^1(z,0,\alpha)+h.o.t.,
\end{equation}
or equivalently
\begin{equation}\label{third}
\begin{split}
\dot{z}_1=&a_{1}(\alpha)z_1+a_{11}z_1^2+a_{23}z_2\bar{z}_2++a_{111}z_1^3+ a_{123}z_1z_2\bar{z}_2+h.o.t.,\\
\dot{z}_2=&\mathrm{i}\omega_0z_2+b_{2}(\alpha)z_2+b_{12}z_1z_2+b_{112}z_1^2z_2+b_{223}z_2^2\bar{z}_2+ h.o.t.,\\ \dot{\bar{z}}_2=&-\mathrm{i}\omega_0\bar{z}_2+\overline{b_{2}(\alpha)}\bar{z}_2+\overline{b_{12}}z_1\bar{z}_2
+\overline{b_{112}}z_1^2\bar{z}_2+\overline{b_{223}}z_2\bar{z}_2^2+h.o.t.,
\end{split}
\end{equation}
Here $a_{ij}$, $b_{ij}$, $a_{ijk}$, $b_{ijk}$ are given by
\begin{equation}\label{eq03-5+++}
\begin{split}
	a_{11}=&\frac{1}{2}\psi_1 (0)Q_{\phi_1\phi_1}\langle\beta_{k_1}^2,\beta_{k_1}\rangle,\;
	a_{23}=\psi_1 (0)Q_{\phi_2\bar{\phi}_2}\langle\beta_{k_2}^2,\beta_{k_1}\rangle,\;
	b_{12}=\psi_2 (0)Q_{\phi_1\phi_2}\langle\beta_{k_1}\beta_{k_2},\beta_{k_2}\rangle,
\end{split}
	\end{equation}
\begin{equation}\label{eq03-5++++}
\begin{split}
	a_{111}=&\ds\frac{1}{6}\psi_1 (0)C_{\phi_1\phi_1\phi_1}\langle\beta_{k_1}^3,\beta_{k_1}\rangle+\psi_1 (0)\langle Q_{\phi_1 h_{200}}\beta_{k_1},\beta_{k_1}\rangle\\
&+\frac{1}{2i\omega_0}\psi_1 (0)[-Q_{\phi_1\phi_2}\psi_2 (0)+Q_{\phi_1\bar{\phi}_2}\bar{\psi}_2 (0)]Q_{\phi_1\phi_1}\langle\beta_{k_1}\beta_{k_2},\beta_{k_1}\rangle
	\langle\beta_{k_1}^2,\beta_{k_2}\rangle,\\
	a_{123}=&\ds\psi_1 (0)C_{\phi_1\phi_2\bar{\phi}_2}\langle\beta_{k_1}\beta_{k_2}^2,\beta_{k_1}\rangle+
	\frac{1}{i\omega_0}\psi_1 (0)\{[-Q_{\phi_1\phi_2}\psi_2 (0)+Q_{\phi_1\bar{\phi}_2}\bar{\psi}_2 (0)]Q_{\phi_2\bar{\phi}_2}\langle\beta_{k_1}\beta_{k_2},\beta_{k_1}\rangle\\
	&\ds\langle\beta_{k_2}^2,\beta_{k_2}\rangle
	+\frac{1}{2}[-Q_{\phi_2\phi_2}\psi_2 (0)Q_{\phi_1\bar{\phi}_2}+Q_{\bar{\phi}_2\bar{\phi}_2}\bar{\psi}_2 (0)Q_{\phi_1\phi_2}]\langle\beta_{k_1}\beta_{k_2},\beta_{k_2}\rangle\langle\beta_{k_2}^2,\beta_{k_1}\rangle]\}\\
	&+\psi_1 (0)(\langle Q_{\phi_1 h_{011}}\beta_{k_1},\beta_{k_1}\rangle +
	\langle Q_{\phi_2 h_{101}}\beta_{k_2},\beta_{k_1}\rangle+\langle Q_{\bar{\phi}_2 h_{110}}\beta_{k_2},\beta_{k_1}\rangle),
\end{split}
	\end{equation}
\begin{equation}\label{eq03-5+++++}
\begin{split}
	b_{112}=&\ds\frac{1}{2}\psi_2 (0)C_{\phi_1\phi_1\phi_2}\langle\beta_{k_1}^2\beta_{k_2},\beta_{k_2}\rangle+ \frac{1}{2i\omega_0}\psi_2(0)\{[2Q_{\phi_1\phi_1}\psi_1(0)\langle\beta_{k_1}\beta_{k_2},\beta_{k_1}\rangle\langle\beta_{k_1}^2,\beta_{k_2}\rangle\\
	&+Q_{\phi_1\bar{\phi}_2}\bar{\psi}_2 (0)\langle\beta_{k_1}\beta_{k_2},\beta_{k_2}\rangle^2]Q_{\phi_1\phi_2} + [-Q_{\phi_2\phi_2}\psi_2(0)+Q_{\phi_2\bar{\phi}_2}\bar{\psi}_2(0)]Q_{\phi_1\phi_1}\langle\beta_{k_1}^2,\beta_{k_2}\rangle\langle\beta_{k_2}^2,\beta_{k_2}\rangle \}\\
&+\psi_2 (0)(\langle Q_{\phi_1 h_{110}}\beta_{k_1},\beta_{k_2}\rangle+
	\langle Q_{\phi_2 h_{200}}\beta_{k_2},\beta_{k_2}\rangle),\\
b_{223}=&\frac{1}{2}\psi_2 (0)C_{\phi_2\phi_2\bar{\phi}_2}\langle\beta_{k_2}^3 ,\beta_{k_2}\rangle+
	\frac{1}{4i\omega_0}\psi_2 (0)\{Q_{\phi_1\bar{\phi}_2}\psi_1 (0)Q_{\phi_2\phi_2}\langle\beta_{k_1}\beta_{k_2},\beta_{k_2}\rangle\langle\beta_{k_2}^2,\beta_{k_1}\rangle +\\&
	\frac{2}{3}Q_{\bar{\phi}_2\bar{\phi}_2}\bar{\psi}_2 (0)Q_{\phi_2\phi_2}\langle\beta_{k_2}^2,\beta_{k_2}\rangle^2
	+[-2Q_{\phi_2\phi_2}\psi_2 (0)+4Q_{\phi_2\bar{\phi}_2}\bar{\psi}_2 (0)]Q_{\phi_2\bar{\phi}_2}\langle\beta_{k_2}^2,\beta_{k_2}\rangle^2\}\\
	&+\psi_2 (0) (\langle Q_{\phi_2 h_{011}}\beta_{k_2},\beta_{k_2}\rangle+\langle Q_{\bar{\phi}_2 h_{020}}\beta_{k_2},\beta_{k_2}\rangle).
	\end{split}
	\end{equation}
respectively,	and $h_{ijk}~(i+j+k=2,~i,j,k\in \mathbb{N}_0)$ are determined by \eqref{eq16+2} and \eqref{eq16+3-}.
\end{theorem}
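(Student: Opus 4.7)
Proof proposal. The plan is to follow Faria's normal form procedure adapted to the PFDE setting: starting from the system \eqref{eq7} on the center manifold coordinates $z=(z_1,z_2,\bar z_2)\in\mathbb{C}^3$ and $y\in\mathcal{Q}^1$, I would Taylor expand the nonlinearity $F(v,\alpha)$ using \eqref{eq6+-} and \eqref{eq6+}, then apply successive near-identity polynomial changes of variables $z\mapsto z+\tfrac{1}{j!}U_j^1(z,y,\alpha)$ of degrees $j=2,3$ to eliminate all non-resonant monomials. The resonant monomials, namely those in the kernel of the homological operator $M_j^1$ acting on polynomials with values in $\mathbb{C}^3$ and carrying the spectrum $\{0,\mathrm{i}\omega_0,-\mathrm{i}\omega_0\}$, are exactly $z_1^2$ and $z_2\bar z_2$ in the first component together with $z_1z_2$ in the second at order two, and $z_1^3,\,z_1z_2\bar z_2$ in the first with $z_1^2z_2,\,z_2^2\bar z_2$ in the second at order three. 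These are precisely the monomials appearing in \eqref{third}.

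For the second order terms the computation is direct: projecting $f_2^1(z,0,\alpha)$ onto the resonant subspace gives each coefficient as $\tfrac{1}{k!}\Psi_i(0)$ applied to the appropriate symmetric quadratic form $Q$ evaluated at the basis elements $\phi_1,\phi_2,\bar\phi_2$, and the spatial projection \eqref{eq5+}--\eqref{eq5++} contributes the inner products $\langle\beta_{k_i}\beta_{k_j},\beta_{k_\ell}\rangle$. This reproduces the formulas for $a_{11},a_{23},b_{12}$ in \eqref{eq03-5+++}.

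For the cubic coefficients there are three contributions to combine. First, the direct cubic part $\tfrac{1}{3!}\Psi_i(0)C_{\phi\phi\phi}\langle\beta^3,\beta\rangle$ from the Taylor expansion of $G$. Second, cross terms produced when the non-resonant part of the order-two change $U_2^1$ is substituted back into the quadratic term: the denominators $1/(\mathrm{i}\omega_0)$ and $1/(2\mathrm{i}\omega_0)$ appearing in \eqref{eq03-5++++}--\eqref{eq03-5+++++} are precisely the reciprocal eigenvalue differences that arise when inverting $M_2^1$ on the non-resonant quadratic monomials in $\mathbb{C}^3$. Third, contributions from the second order center-manifold approximation $h_{ijk}(\theta)(x)\in\mathcal{Q}^1$, which generate the $\langle Q_{\phi_j h_{ijk}}\beta_{k_j},\beta_{k_\ell}\rangle$ terms; the $h_{ijk}$ themselves are obtained by solving the inhomogeneous linear problems \eqref{eq16+2}--\eqref{eq16+3-} coming from the $y$-equation in \eqref{eq7} restricted to $\mathrm{Ker}\,\pi$.

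The main obstacle will be carefully organising the spatial projection through $\pi$: because the eigenfunctions $\beta_{k_1},\beta_{k_2}$ are spatially inhomogeneous, products $\beta_{k_1}^a\beta_{k_2}^b$ arising in the nonlinearity generally lie outside $\mathrm{span}\{\beta_{k_1},\beta_{k_2}\}$, and only their inner products with $\beta_{k_1}$ or $\beta_{k_2}$ feed into the normal form. Tracking these inner products while simultaneously inverting $M_2^1$ monomial-by-monomial, and keeping quadratic-form subscripts aligned across the three contribution types above, is the bookkeeping-heavy step; I expect the remaining algebra of matching Faria's abstract output to the concrete formulas \eqref{eq03-5+++}--\eqref{eq03-5+++++} to be routine once this bookkeeping is encoded in the concise matrix notation advertised in the introduction.
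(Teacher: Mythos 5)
Your proposal follows essentially the same route as the paper's own proof in Section~\ref{proof}: Taylor expansion of \eqref{eq7}, identification of the resonant monomials spanning $(\mathrm{Im}(M_j^1))^c$, the quadratic projection yielding $a_{11},a_{23},b_{12}$, and the decomposition of the cubic coefficients into the direct term $f_3^1$, the feedback of $U_2^1$ into the quadratic part, and the center-manifold contribution through $U_2^2=2h$ solved from \eqref{eq16+2}--\eqref{eq16+3-}. The only item your accounting omits is the term $-D_zU_2^1(z)\,g_2^1(z,0,0)$ produced by the change of variables \eqref{eq17} (see \eqref{eq19} and the paper's $g_{32}$ in \eqref{eq44+}); a complete argument must check its projection onto $(\mathrm{Im}(M_3^1))^c$, which the paper verifies vanishes in \eqref{g32} because $U_2^1\in\mathrm{Ker}(M_2^1)^c$ and $g_2^1\in\mathrm{Im}(M_2^1)^c$ combine into an element of $\mathrm{Im}(M_3^1)$. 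Since that term contributes nothing, your outline leads to the same formulas \eqref{eq03-5+++}--\eqref{eq03-5+++++}.
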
	

\begin{remark}
For the normal form up to the third order, we only need to calculate the eigenvectors which are given by \eqref{eq5+3}, the linear parts $L_1(\alpha)$ and $D_1(\alpha)$ in \eqref{eq6+-}, and the multilinear forms $Q$ and $C$ which are given in \eqref{eq6+}.
\end{remark}

For the reduced system \eqref{third}, the bifurcation structure can be distinguished into the two main types: Hopf-transcritical type and Hopf-pitchfork type, which we will discuss separately below.
\subsection{Hopf-transcritical type}
Following \cite{JiangW2010}, we have
\begin{definition}\label{def3.3}
Assume that {\bf (H1)}-{\bf (H4)}
are satisfied, $a_{11}\ne 0$, $a_{23}\ne 0$, ${\rm Re}(b_{12})\ne 0$, and $a_{11}-{\rm Re}(b_{12})\ne 0$.
Then we say that a  Hopf-steady state bifurcation with Hopf-transcritical type  occurs for \eqref{eq301} (or referred as a Hopf--transcritical bifurcation) at the trivial equilibrium when $\alpha=0$.
\end{definition}
Moreover we adopt the same coordinate transformation (23) and (25) in
\cite{JiangW2010}, then \eqref{third} can be reduced to the planar system (see \cite{JiangW2010})
\begin{equation}\label{eq473}
\begin{split}
\dot{r}=& r(\varepsilon_1(\alpha)+az+cr^2+dz^2),\\
\dot{z}=& \varepsilon_2(\alpha)z+br^2-z^2+er^2z+fz^3,
\end{split}
\end{equation}
where
\begin{equation*}
\begin{split}
&\varepsilon_1(\alpha)=\mathrm{Re}(b_2(\alpha)),~\varepsilon_2(\alpha)=a_1(\alpha),~
a=-\frac{\mathrm{Re}(b_{12})}{a_{11}}, ~b=-\mathrm{sign}(a_{11}a_{23}),\\
&c=\frac{\mathrm{Re}(b_{223})}{|a_{11}a_{23}|},~d=
\frac{\mathrm{Re}(b_{112})}{{a_{11}}^2},~e=\frac{a_{123}}{|a_{11}a_{23}|},~f
=\frac{a_{111}}{{a_{11}}^2}.
\end{split}
\end{equation*}
Now \eqref{eq473}  has the same form as (36) in \cite{JiangW2010}. By \cite{Guck1983} and \cite{JiangW2010},  there are four different topological structure for \eqref{eq473} with the Hopf-transcritical bifurcation depending on the signs of $a$ and $b$:
	$$
	\begin{array}{llll}
	\mathrm{Case ~I}: &b=1, a>0; &   ~~\mathrm{ Case ~II}: &b=1, a<0;\\
	\mathrm{Case~III}: &b=-1, a>0; & ~~\mathrm{Case ~IV}: &b=-1, a<0.\\
	\end{array}
	$$
The results  in \cite{JiangW2010} can be directly applied to analyze the
equation \eqref{eq473} and  the dynamical properties for original system \eqref{eq301} can be revealed with the help of the analysis in \cite [Section 4] {AJ}.
\subsection{Hopf-pitchfork type}
Following \cite{Guck1983}, we have
\begin{definition}
Assume that {\bf (H1)}-{\bf (H4)}
are satisfied, $a_{11}=a_{23}=b_{12}=0$,  $a_{111}\ne 0$, $a_{123}\ne 0$, $\mathrm{Re}(b_{112})\ne 0$, $\mathrm{Re}(b_{223})\neq 0$, and $a_{111}\mathrm{Re}(b_{223})-a_{123}\mathrm{Re}(b_{112})\neq 0$. Then  we say that a Hopf-steady state bifurcation with Hopf-pitchfork type  occurs for
\eqref{eq301} at the trivial equilibrium when $\alpha=0$.
\end{definition}
By using the same coordinate transformation in
(23) of \cite{JiangW2010}  and the coordinate transformation
\begin{equation}\label{tran-2}
\sqrt{|\mathrm{Re}b_{223}|}~r\rightarrow r, ~\sqrt{|a_{111}|}~z \rightarrow z,~\mathrm{sign}(\mathrm{Re}(b_{223})) t\rightarrow t,\end{equation}
we obtain a planar system (see \cite{Guck1983} and \cite{WangJ2010}):
\begin{equation}\label{eq473-2}
\begin{split}
\dot{r}=& r(\varepsilon_1(\alpha)+r^2+b_0z^2),\\
\dot{z}=& z(\varepsilon_2(\alpha)+c_0r^2+d_0z^2),
\end{split}
\end{equation}
where
\begin{equation*}
  \begin{split}
&\varepsilon_1(\alpha)=\mathrm{Re}(b_2(\alpha))\mathrm{sign}(\mathrm{Re}(b_{223})),
~\varepsilon_2(\alpha)=a_1(\alpha)\mathrm{sign}(\mathrm{Re}(b_{223})),\\
&b_0=\ds\frac{\mathrm{Re}(b_{112})}{|a_{111}|}\mathrm{sign}(\mathrm{Re}(b_{223})),
~c_0=\frac{a_{123}}{|\mathrm{Re}(b_{223})|}\mathrm{sign}(\mathrm{Re}(b_{223})),
~d_0=\mathrm{sign}(a_{111}\mathrm{Re}(b_{223})).
\end{split}
\end{equation*}

For system \eqref{eq473-2}, there are possibly four equilibrium points as follows:
\begin{equation}
\begin{array}{ll}
E_1 = (0,0),  
&\mathrm{for~all}\;\; \varepsilon_1, \varepsilon_2,\\
E_2 = (\sqrt{-\varepsilon_1},0),
&\mathrm{for}\; \varepsilon_1<0,\\
E_3^{\pm} = (0,{\pm}\ds\sqrt{-\frac{\varepsilon_2}{d_0}}),
&\mathrm{for}\;\; \ds\frac{\varepsilon_2}{d_0}<0,\\
E_4^{\pm} = (\ds\sqrt{\frac{b_0\varepsilon_2-d_0\varepsilon_1}{d_0-b_0c_0}},\ds{\pm}\sqrt{\frac{c_0\varepsilon_1-\varepsilon_2}{d_0-b_0c_0}}),
&\mathrm{for}\;\ds\frac{b_0\varepsilon_2-d_0\varepsilon_1}{d_0-b_0c_0},\ds\frac{c_0\varepsilon_1-\varepsilon_2}{d_0-b_0c_0}>0.
\end{array}
\end{equation}
Based on \cite [\S7.5]{Guck1983}, by the different signs of
$b_0,c_0,d_0,d_0-b_0c_0$ in Table \ref{tab1},  Eq. \eqref{eq473-2} has twelve distinct types of unfoldings, which are twelve essentially distinct types of phase portraits and bifurcation diagrams. The results  in \cite{Guck1983} can be directly applied to analyze the
equation \eqref{eq473-2} and  the dynamical properties for original system \eqref{eq301} can be revealed with the help of the analysis in \cite [Section 4] {AJ}. For the convenience of application to the example in Section 5, we show the bifurcation digram in parameters $(\varepsilon_1,\varepsilon_2)$  and phase portraits of Case $\mathrm{III}$ of Hopf-pitchfork type (see Table \ref{tab1}) in Figure \ref{fig1}.

\begin{table}
\begin{center}
\begin{tabular}{|c|cccccccccccc|}
\hline
~~~Case~~~ &$\mathrm{I}a$ &$ \mathrm{I}b$ & $\mathrm{II} $
&  $ \mathrm{III}$ &$ \mathrm{IV}a$&$ \mathrm{IV}b$& $ \mathrm{V}$
&$ \mathrm{VI}a$ &  $ \mathrm{VI}b$ & $ \mathrm{VII}a$ & $ \mathrm{VII}b$ &$ \mathrm{VIII}$\\
\hline $d_0$ & $+1$ & $+1$ & $+1$ & $+1$ & $+1$ & $+1$ & $-1$& $-1$& $-1$& $-1$& $-1$& $-1$\\
 $ b_0$ & $+$ & $+$& $+$& $-$& $-$& $-$& $+$& $+$& $+$& $-$ & $-$& $-$\\
$c_0$ & $+$& $+$& $-$& $+$& $-$& $-$& $+$& $-$& $-$& $+$& $+$& $-$\\
$ d_0-b_0c_0$ & $+$& $-$& $+$& $+$& $+$& $-$& $-$& $+$& $-$& $+$& $-$& $-$\\
\hline
\end{tabular}
\end{center}
\caption{The twelve unfoldings of \eqref{eq473-2} \cite {Guck1983}}\label{tab1}
\end{table}

\begin{figure}[htbp]
	\centering
\subfigure[]{
		\begin{minipage}{0.4\linewidth}
			\centering
			\includegraphics[scale=0.22]{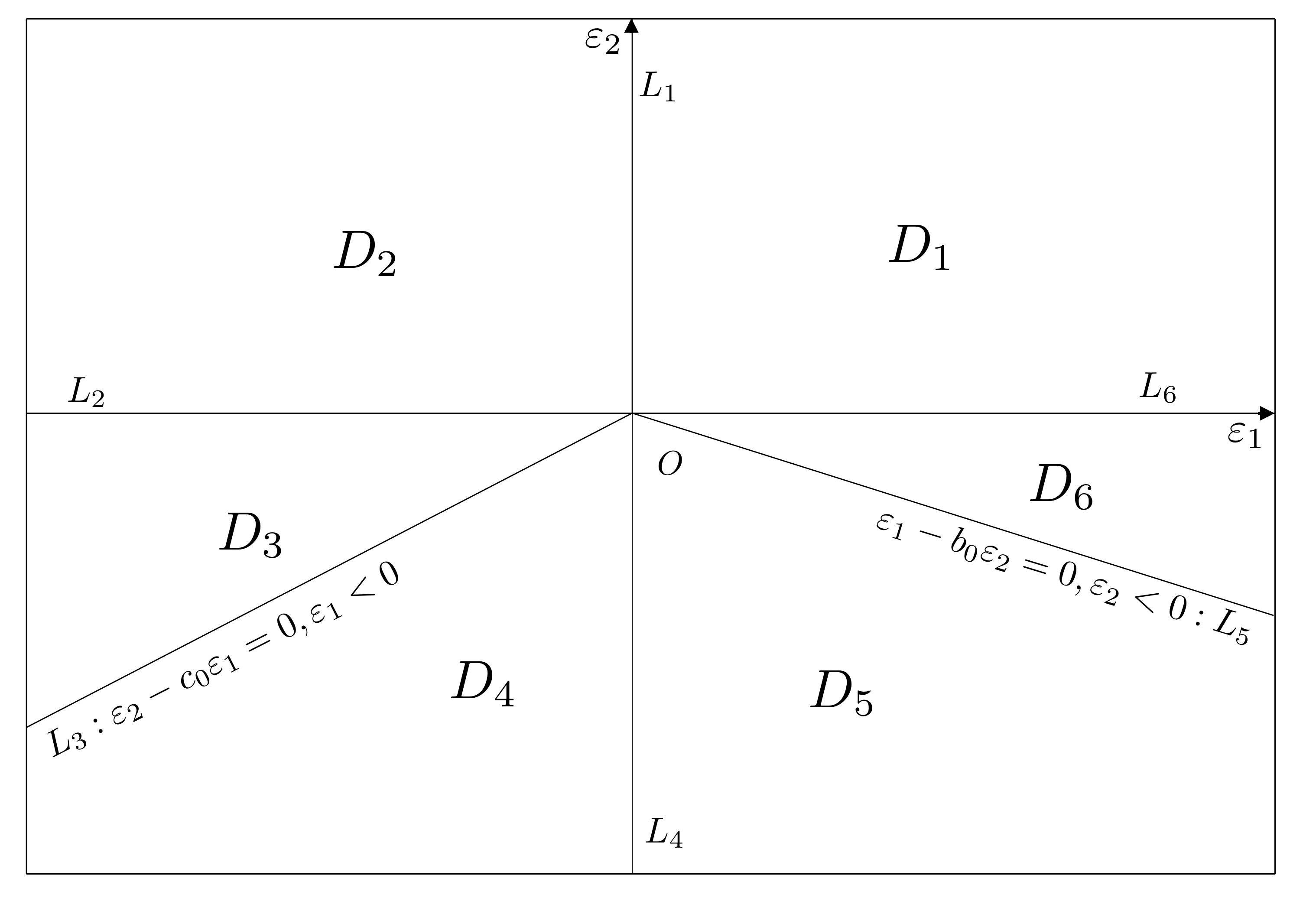}
	\end{minipage}}
\subfigure[]{
		\begin{minipage}{0.56\linewidth}
			\centering
			\includegraphics[scale=0.34]{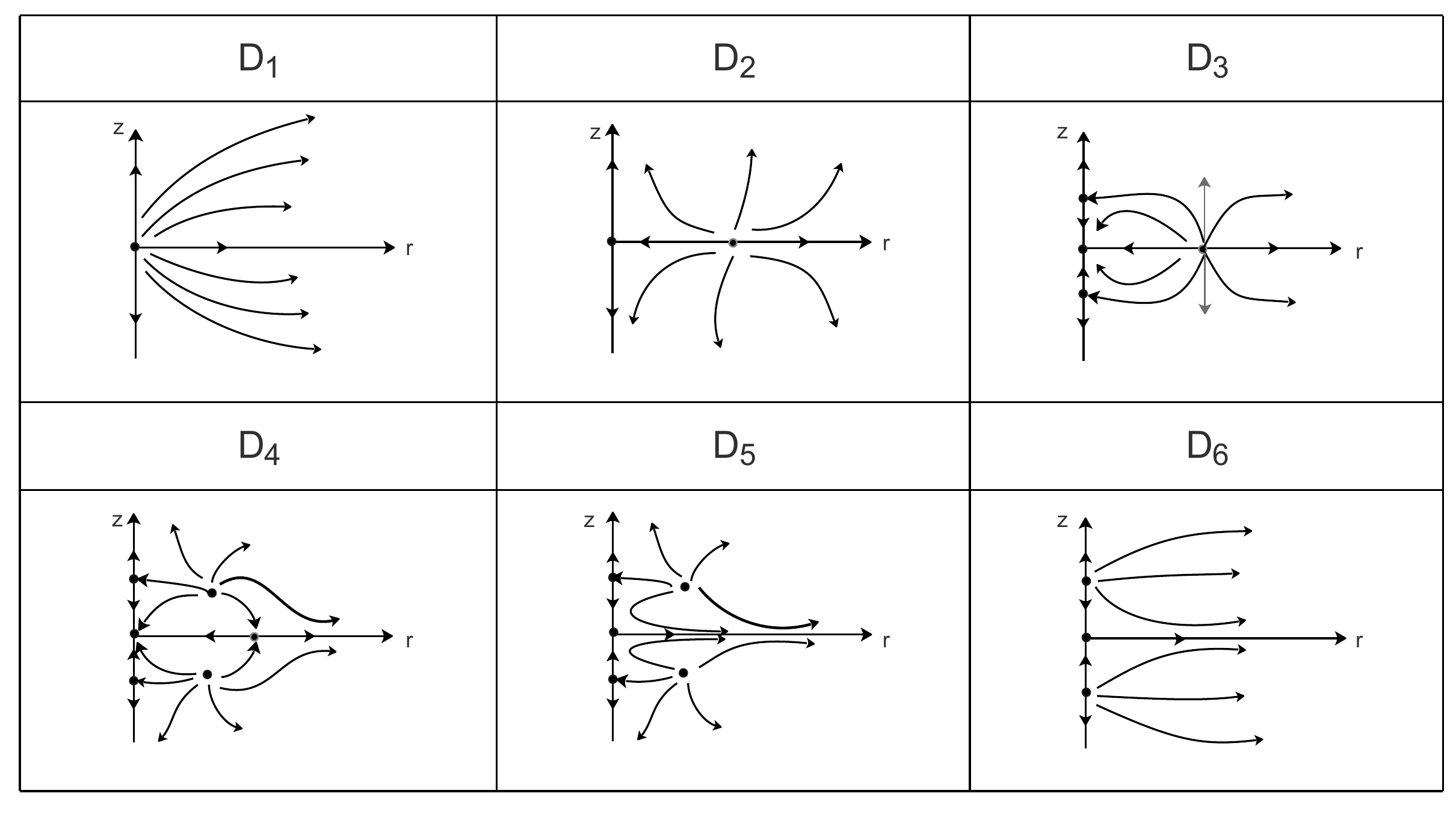}
		\end{minipage}}
\caption{The bifurcation set (a) and the phase portraits (b) for Case $ \mathrm{III}$ of Hopf-pitchfork type. \label{fig1} }
\end{figure}

In the next section, we will give the exact expressions of  $h_{ijk}$ in \eqref{eq03-5+++}, \eqref{eq03-5++++} and \eqref{eq03-5+++++} under the Neumann boundary condition and $\Omega =(0,l\pi)$ for $l>0$.

\section{Explicit formulas for Neumann boundary conditions}
	
In this section, we give more explicit formulas of coefficients $h_{ijk}$ in \eqref{eq03-5+++}, \eqref{eq03-5++++} and \eqref{eq03-5+++++} of the normal form \eqref{third} restricted on the centre manifold in the case of spatial dimension $n=1$ and $\Omega = (0,l\pi)$ for some $l>0$, and we consider \eqref{eq301} with Neumann boundary condition.
	
It is well known that the eigenvalue problem
$$
\beta''+\mu\beta=0,~~~x\in(0,l\pi),~~~\beta'(0)=\beta'(l\pi)=0,$$
has eigenvalues $\mu_n=n^2/l^2$ for $n\in \N_0$ with corresponding normalized eigenfuctions
$$\beta_0(x)=1,\;\;\beta_n(x)=\sqrt{2}\cos\frac{n}{l}x,~~~n\in \N,$$
	where $\langle\beta_m(x),\beta_n(x) \rangle =\frac{1}{l\pi}\int_0^{l\pi}\beta_m(x)\beta_n(x)\mathrm{d}x=\delta_{mn}$.  In what follows,  we denote  $$h_q^{k}=\langle h_q,\beta_{k} \rangle,~~q\in \mathbb{N}_0^3,~~|q|=2,~~k\in\mathbb{N}_0.$$
	
According to the different situations of $k_1,~k_2$ in ($\bf{H4}$), for the convenience of applications, we will give the exact formulas of $a_{11},~a_{23},~a_{111},~a_{123},~b_{12},~b_{112},~b_{223}$ in \eqref{third} for the following five cases.
	
\noindent{\bf Case (1) $k_1=k_2=0$.}
	
In this case,
$$\begin{array}{rl}
\langle\beta_{k_1}^2,\beta_{k_1}\rangle=& \langle\beta_{k_2}^2,\beta_{k_1} \rangle=\langle\beta_{k_1}\beta_{k_2},\beta_{k_2} \rangle=\langle\beta_{0},\beta_{0}\rangle=1,\\
\langle\beta_{k_1}^2,\beta_{k_2}\rangle=& \langle\beta_{k_2}^2,\beta_{k_2} \rangle=\langle\beta_{k_1}\beta_{k_2},\beta_{k_1} \rangle=\langle\beta_{0},\beta_{0}\rangle=1,\\
\langle\beta_{k_1}^3,\beta_{k_1}\rangle=& \langle\beta_{k_2}^3,\beta_{k_2} \rangle=\langle\beta_{k_1}\beta_{k_2}^2,\beta_{k_1} \rangle=\langle\beta_{k_1}^2\beta_{k_2},\beta_{k_2} \rangle=\langle\beta_{0},\beta_{0}\rangle=1,\\
\langle Q_{\phi h_q}\beta_{k_i},\beta_{k_j} \rangle=&Q_{\phi} h_q^0,~~ \phi \in \{ \phi_1, \phi_2, \bar{\phi}_2 \},~~i,j=1,2,~~q\in \mathbb{N}_0^3,~~|q|=2.
\end{array}$$
By \eqref{eq16+2} and \eqref{eq16+3-}, $h_q^{k_i}=h_q^0$ for $i=1,2$ and $q\in \mathbb{N}_0^3$ with $|q|=2$, which is needed in \eqref{eq03-5+++}, \eqref{eq03-5++++} and \eqref{eq03-5+++++}, are as follows:
\begin{equation}\label{eq03-1}
\begin{array}{rlc}
	 h_{200}^0(\theta)=&\frac{1}{2}[\theta\phi_1(0)\psi_1(0)+\frac{1}{i\omega_0}(\phi_2(\theta)\psi_2(0)-\bar{\phi}_2(\theta)\bar{\psi}_2(0))]Q_{\phi_1\phi_1}+E_{200},\\
	 h_{011}^0(\theta)=&[\theta\phi_1(0)\psi_1(0)+\frac{1}{i\omega_0}(\phi_2(\theta)\psi_2(0)-\bar{\phi}_2(\theta)\bar{\psi}_2(0))]Q_{\phi_2\bar{\phi}_2}+E_{011},\\
	 h_{020}^0(\theta)=&-\frac{1}{2i\omega_0}[\frac{1}{2}\phi_1(0)\psi_1(0)+\phi_2(\theta)\psi_2(0)+\frac{1}{3}\bar{\phi}_2(\theta)\bar{\psi}_2(0))]Q_{\phi_2\phi_2}+E_{020}e^{2i\omega_0\theta},\\
	 h_{110}^0(\theta)=&\frac{1}{i\omega_0}[-\phi_1(0)\psi_1(0)+i\omega_0\theta\phi_2(\theta)\psi_2(0)-\frac{1}{2}\bar{\phi}_2(\theta)\bar{\psi}_2(0))]Q_{\phi_1\phi_2}+E_{110}e^{i\omega_0\theta},\\
	 h_{002}^0(\theta)=&\overline{h_{020}^0(\theta)},~~~h_{101}^0(\theta)=\overline{h_{110}^0(\theta)},
\end{array}
\end{equation}
where $\theta\in[-r,0]$ and the constant vectors $E_q$ for $q\in \mathbb{N}_0^3$ with $|q|=2$, satisfy the following conditions
\begin{equation}\label{eq03-1+}
\begin{array}{rlc}
[\int_{-r}^0\mathrm{d}\eta_0(\theta)]E_{200}=&\frac{1}{2}[-I+(I-\int_{-r}^0\theta \mathrm{d}\eta_0(\theta))\phi_1(0)\psi_1(0)]Q_{\phi_1\phi_1},\\

[\int_{-r}^0 \mathrm{d} \eta_0(\theta)]E_{011}=&[-I+(I-\int_{-r}^0\theta \mathrm{d} \eta_0(\theta))\phi_1(0)\psi_1(0)]Q_{\phi_2\bar{\phi}_2},\\

E_{020}=&\frac{1}{2}[2i\omega_0I-\int_{-r}^0e^{2i\omega_0\theta} \mathrm{d} \eta_0(\theta)]^{-1}Q_{\phi_2\phi_2},\\

[i\omega_0I-\int_{-r}^0e^{i\omega_0\theta}\mathrm{d}\eta_0(\theta)]E_{110}=&[I-\phi_2(0)\psi_2(0)+\int_{-r}^0\theta \mathrm{d}\eta_0(\theta)\phi_2(\theta)\psi_2(0)]Q_{\phi_1\phi_2}.
\end{array}
\end{equation}
Thus we have the following result.

\begin{pro}\label{pro:4.1} For $k_1=k_2=0$ and Neumann boundary condition on spatial domain $\Omega =(0,l\pi),~l>0$, the parameters $a_{11},~a_{23},~a_{111},~a_{123},~b_{12},~b_{112},~b_{223}$ in \eqref{third} are given by
	\begin{equation}\label{eq03-1++}
	\begin{array}{rlc}
	a_{11}=&\frac{1}{2}\psi_1 (0)Q_{\phi_1\phi_1},\;\;
	a_{23}=\psi_1 (0)Q_{\bar{\phi}_2\phi_2},\;\;
	b_{12}=\psi_2 (0)Q_{\phi_1\phi_2},\\
	
	a_{111}=&\frac{1}{6}\psi_1 (0)C_{\phi_1\phi_1\phi_1}+\frac{1}{\omega_0}\psi_1 (0)\mathrm{Re}(\mathrm{i}Q_{\phi_1\phi_2}\psi_2 (0))Q_{\phi_1\phi_1}+\psi_1 (0) Q_{\phi_1} h_{200}^0,\\
	a_{123}=&\psi_1 (0)C_{\phi_1\phi_2\bar{\phi}_2}+\frac{2}{\omega_0}\psi_1 (0)\mathrm{Re}(\mathrm{i}Q_{\phi_1\phi_2}\psi_2 (0))Q_{\phi_2\bar{\phi}_2}\\
	
	&+\frac{1}{\omega_0}\psi_1 (0)\mathrm{Re}(\mathrm{i}Q_{\phi_2\phi_2}\psi_2 (0)Q_{\phi_1\bar{\phi}_2})+\psi_1 (0) (Q_{\phi_1} h_{011}^0+Q_{\phi_2} h_{101}^0+Q_{\bar{\phi}_2} h_{110}^0),\\
	
	b_{112}=&\frac{1}{2}\psi_2 (0)C_{\phi_1\phi_1\phi_2}+\frac{1}{2i\omega_0}\psi_2 (0)\{[2Q_{\phi_1\phi_1}\psi_1 (0) +Q_{\phi_1\bar{\phi}_2}\bar{\psi}_2 (0)]Q_{\phi_1\phi_2}\\
	
	&+[-Q_{\phi_2\phi_2}\psi_2 (0)+Q_{\phi_2\bar{\phi}_2}\bar{\psi}_2 (0)]Q_{\phi_1\phi_1} \}+\psi_2 (0) (Q_{\phi_1} h_{110}^0+Q_{\phi_2} h_{200}^0),\\
	
	b_{223}=&\frac{1}{2}\psi_2 (0)C_{\phi_2\phi_2\bar{\phi}_2}+\frac{1}{4i\omega_0}\psi_2 (0)\{Q_{\phi_1\bar{\phi}_2}\psi_1 (0)Q_{\phi_2\phi_2} +\frac{2}{3}Q_{\bar{\phi}_2\bar{\phi}_2}\bar{\psi}_2 (0)Q_{\phi_2\phi_2}\\&
	
	+[-2Q_{\phi_2\phi_2}\psi_2 (0)+4Q_{\phi_2\bar{\phi}_2}\bar{\psi}_2 (0)]Q_{\phi_2\bar{\phi}_2}\}+\psi_2 (0) (Q_{\phi_2} h_{011}^0+Q_{\bar{\phi}_2} h_{020}^0).
\end{array}
\end{equation}
\end{pro}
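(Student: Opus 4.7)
The strategy is to specialise the abstract formulas \eqref{eq03-5+++}--\eqref{eq03-5+++++} of Theorem \ref{g3} to the case $k_1=k_2=0$ on the interval $\Omega=(0,l\pi)$ with Neumann boundary condition, and then to solve explicitly for the quadratic centre-manifold coefficients $h_q^0$ using the defining equations \eqref{eq16+2} and \eqref{eq16+3-}.

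First I would compute all the spatial integrals. Since $\beta_0\equiv 1$, every inner product of the form $\langle \beta_{k_i}\beta_{k_j},\beta_{k_l}\rangle$ or $\langle \beta_{k_i}\beta_{k_j}\beta_{k_l},\beta_{k_m}\rangle$ collapses to $\langle 1,1\rangle=1$, so every spatial factor drops out of \eqref{eq03-5+++}--\eqref{eq03-5+++++}. Moreover, because the quadratic forcing $Q(\phi_i,\phi_j)\beta_{k_i}\beta_{k_j}$ projects entirely onto the zero Fourier mode, the projections $\langle Q_{\phi h_q}\beta_{k_i},\beta_{k_j}\rangle$ reduce to $Q_{\phi}h_q^0$. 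Plugging these simplifications in and using the conjugation identity $\overline{Q_{\phi_1\phi_2}\psi_2(0)}=Q_{\phi_1\bar\phi_2}\bar\psi_2(0)$ to combine the two complex-conjugate terms inside $\tfrac{1}{2\mathrm{i}\omega_0}[-Q_{\phi_1\phi_2}\psi_2(0)+Q_{\phi_1\bar\phi_2}\bar\psi_2(0)]$ into $\tfrac{1}{\omega_0}\mathrm{Re}(\mathrm{i}Q_{\phi_1\phi_2}\psi_2(0))$ (and similarly for the companion terms), one reads off the claimed expressions \eqref{eq03-1++} directly.

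Next I would determine $h_q^0(\theta)$ for $|q|=2$. From \eqref{eq16+2}, each $h_q$ satisfies a linear non-homogeneous FDE of the form $\dot h_q(\theta)=B h_q(\theta)+P_q(\theta)$ on $[-r,0)$ together with a boundary condition at $\theta=0$ coming from \eqref{eq16+3-}; here $P_q$ is a known combination built from $\phi_1,\phi_2,\bar\phi_2$ via the projection of the quadratic forcing onto the centre directions, and the $\Delta$-part of the boundary condition drops out because $\Delta\beta_0=0$. Using \eqref{eq5+6} (so the $\phi_j$'s are exponentials in $\theta$) and solving by the method of variation of parameters yields \eqref{eq03-1}. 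The polynomial-in-$\theta$ terms in $h_{200}^0$ and $h_{011}^0$, and the linear-in-$\theta$ term in $h_{110}^0$, arise as the secular parts of the particular solution in the resonant frequencies $\lambda=0$ and $\lambda=\mathrm{i}\omega_0$, while $h_{020}^0$ is purely exponential since $2\mathrm{i}\omega_0$ lies in the resolvent of $\Delta_0$ by (H4). The compatibility conditions at $\theta=0$ then determine the constant vectors $E_q$ via the linear systems \eqref{eq03-1+}.

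The main obstacle is a careful handling of the resonant coefficients $h_{200}^0,h_{011}^0,h_{110}^0$: at $\lambda=0$ and $\lambda=\mathrm{i}\omega_0$ the characteristic matrices $\Delta_0(0)$ and $\Delta_0(\mathrm{i}\omega_0)$ are singular, so one cannot invert them directly. The trick is to split the particular solution into (i) a secular piece of the form $\theta\,\phi_j(\theta)\psi_j(0)$ times the quadratic forcing, which exactly absorbs the projection of $P_q$ onto the kernel $\mathrm{span}\{\phi_j\}$, and (ii) a complementary piece living in the range of the characteristic operator, whose constant-in-$\theta$ or $e^{\mathrm{i}\omega_0\theta}$ coefficient $E_q$ is then pinned down by the remaining non-degenerate equation in \eqref{eq03-1+}. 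Once the bookkeeping for these resonances is set up, every other step is a direct substitution, and the five cases of $(k_1,k_2)$ mentioned after the proposition can be handled by the same template with the appropriate $\langle\beta_{k_i}\beta_{k_j},\beta_{k_l}\rangle$ substituted.
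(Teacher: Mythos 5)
Your proposal is correct and follows essentially the same route as the paper: specialize the general formulas of Theorem \ref{g3} using that all spatial inner products equal $1$ when $\beta_{k_1}=\beta_{k_2}=\beta_0\equiv 1$ (so $\langle Q_{\phi h_q}\beta_{k_i},\beta_{k_j}\rangle=Q_\phi h_q^0$), rewrite the conjugate pairs via $\tfrac{1}{2\mathrm{i}\omega_0}(-w+\bar w)=\tfrac{1}{\omega_0}\mathrm{Re}(\mathrm{i}w)$, and obtain the $h_q^0$ and the constants $E_q$ by solving \eqref{eq16+2}--\eqref{eq16+3-}, with the secular $\theta$-terms handling the resonances at $0$ and $\mathrm{i}\omega_0$ exactly as in \eqref{eq03-1}--\eqref{eq03-1+}. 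No gaps.
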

	
	\noindent{\bf Case (2) $k_1=k_2\neq0$.}
	
Here we have
	$$
	\begin{array}{rl}
	\langle\beta_{k_1}^2,\beta_{k_1}\rangle=& \langle\beta_{k_2}^2,\beta_{k_1} \rangle=\langle\beta_{k_1}\beta_{k_2},\beta_{k_2} \rangle=0,\\
	\langle\beta_{k_1}^2,\beta_{k_2}\rangle=& \langle\beta_{k_2}^2,\beta_{k_2} \rangle=\langle\beta_{k_1}\beta_{k_2},\beta_{k_1} \rangle=0,\\
	\langle\beta_{k_1}^3,\beta_{k_1}\rangle=& \langle\beta_{k_2}^3,\beta_{k_2} \rangle=\langle\beta_{k_1}\beta_{k_2}^2,\beta_{k_1} \rangle=\langle\beta_{k_1}^2\beta_{k_2},\beta_{k_2} \rangle=\frac{3}{2},\\
	\langle Q_{\phi h_q}\beta_{k_i},\beta_{k_j} \rangle=&Q_{\phi}( h_q^0+\frac{1}{\sqrt{2}}h_q^{2k_1}),~~ \phi \in \{ \phi_1, \phi_2, \bar{\phi}_2 \},~~i=1,2,~~q\in \mathbb{N}_0^3,~~|q|=2.
	\end{array}
	$$
	It also follows from \eqref{eq16+2} and \eqref{eq16+3-} that $h_q^{k_i}=h_q^0$ for $i=1,2$, $q\in \mathbb{N}_0^3$ with $|q|=2$, are given by
	\begin{equation}\label{eq03-2}
	\begin{array}{rlc}
	h_{200}^0(\theta)\equiv &-\frac{1}{2}[\int_{-r}^0\mathrm{d}\eta _0(\theta)]^{-1}Q_{\phi_1\phi_1},\;\;
	h_{200}^{2k_1}(\theta)\equiv -\frac{1}{2\sqrt{2}}[\int_{-r}^0\mathrm{d}\eta _{2k_1}(\theta)]^{-1}Q_{\phi_1\phi_1},\\
	h_{011}^0(\theta)\equiv &-[\int_{-r}^0\mathrm{d}\eta _0(\theta)]^{-1}Q_{\phi_2\bar{\phi}_2},\;\;
	h_{011}^{2k_1}(\theta)\equiv -\frac{1}{\sqrt{2}}[\int_{-r}^0\mathrm{d}\eta _{2k_1}(\theta)]^{-1}Q_{\phi_2\bar{\phi}_2},\\
	
	 h_{020}^{0}(\theta)=&\frac{1}{2}[2i\omega_0I-\int_{-r}^0e^{2i\omega_0\theta}\mathrm{d}\eta _{0}(\theta)]^{-1}Q_{\phi_2\phi_2}e^{2i\omega_0\theta},\\
	 h_{020}^{2k_1}(\theta)=&\frac{1}{2\sqrt{2}}[2i\omega_0I-\int_{-r}^0e^{2i\omega_0\theta}\mathrm{d}\eta _{2k_1}(\theta)]^{-1}Q_{\phi_2\phi_2}e^{2i\omega_0\theta},\\
	
	h_{110}^0(\theta)=&[i\omega_0I-\int_{-r}^0e^{i\omega_0\theta}\mathrm{d}\eta _{0}(\theta)]^{-1}Q_{\phi_1\phi_2}e^{i\omega_0\theta},\\
	 h_{110}^{2k_1}(\theta)=&\frac{1}{\sqrt{2}}[i\omega_0I-\int_{-r}^0e^{i\omega_0\theta}\mathrm{d}\eta _{2k_1}(\theta)]^{-1}Q_{\phi_1\phi_2}e^{i\omega_0\theta},\\
	
	h_{002}^0(\theta)=&\overline{h_{020}^0(\theta)},~~~
	 h_{002}^{2k_1}(\theta)=\overline{h_{020}^{2k_1}(\theta)},~~~h_{101}^0(\theta)=\overline{h_{110}^0(\theta)},~~~
	h_{101}^{2k_1}(\theta)=\overline{h_{110}^{2k_1}(\theta)}.
	\end{array}
	\end{equation}
	where $\theta\in[-r,0]$.
	Thus we have the following result.

\begin{pro}\label{pro:4.2} For $k_1=k_2\neq0$ and Neumann boundary condition on spatial domain $\Omega =(0,l\pi),~l>0$, the parameters $a_{11},~a_{23},~a_{111},~a_{123},~b_{12},~b_{112},~b_{223}$ in \eqref{third} are
	\begin{equation}\label{eq03-2++}
	\begin{array}{rlc}
	a_{11}=&a_{23}=b_{12}=0,\\
	
	a_{111}=&\frac{1}{4}\psi_1 (0)C_{\phi_1\phi_1\phi_1}+\psi_1 (0) Q_{\phi_1}( h_{200}^0+\frac{1}{\sqrt{2}}h_{200}^{2k_1}),\\
	
	a_{123}=&\frac{3}{2}\psi_1 (0)C_{\phi_1\phi_2\bar{\phi}_2}+\psi_1 (0) [Q_{\phi_1} (h_{011}^0+\frac{1}{\sqrt{2}}h_{011}^{2k_1})+Q_{\phi_2} (h_{101}^0+\frac{1}{\sqrt{2}}h_{101}^{2k_1})+Q_{\bar{\phi}_2}( h_{110}^0+\frac{1}{\sqrt{2}}h_{110}^{2k_1})],\\
	
	b_{112}=&\frac{3}{4}\psi_2 (0)C_{\phi_1\phi_1\phi_2}+\psi_2 (0) [Q_{\phi_1} (h_{110}^0+\frac{1}{\sqrt{2}}h_{110}^{2k_1})+Q_{\phi_2} (h_{200}^0+\frac{1}{\sqrt{2}}h_{200}^{2k_1})],\\
	
	b_{223}=&\frac{3}{4}\psi_2 (0)C_{\phi_2\phi_2\bar{\phi}_2}+\psi_2 (0) [Q_{\phi_2} (h_{011}^0+\frac{1}{\sqrt{2}}h_{011}^{2k_1})+Q_{\bar{\phi}_2} (h_{020}^0+\frac{1}{\sqrt{2}}h_{020}^{2k_1})].
	\end{array}
	\end{equation}
    \end{pro}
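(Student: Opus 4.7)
The plan is a direct specialization of Theorem \ref{g3} to the geometry imposed by $k_1=k_2\neq 0$, Neumann boundary conditions, and $\Omega=(0,l\pi)$. The work splits naturally into two parts: (i) reducing the inner products $\langle \beta_{k_1}^p\beta_{k_2}^q,\beta_{k_i}\rangle$ that appear as combinatorial coefficients in the normal form, and (ii) computing the mode expansion of the quadratic centre-manifold functions $h_{ijk}$ from \eqref{eq16+2}--\eqref{eq16+3-}.

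For part (i), I would use the elementary product-to-sum identities
\begin{equation*}
2\cos^{2}\!\bigl(\tfrac{k_1 x}{l}\bigr)=1+\cos\!\bigl(\tfrac{2k_1 x}{l}\bigr),\qquad 4\cos^{3}\!\bigl(\tfrac{k_1 x}{l}\bigr)=3\cos\!\bigl(\tfrac{k_1 x}{l}\bigr)+\cos\!\bigl(\tfrac{3k_1 x}{l}\bigr),
\end{equation*}
which translate into the eigenfunction expansions $\beta_{k_1}^{2}=\beta_{0}+\tfrac{1}{\sqrt{2}}\beta_{2k_1}$ and $\beta_{k_1}^{3}=\tfrac{3}{2}\beta_{k_1}+\tfrac{1}{2}\beta_{3k_1}$. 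By orthonormality, every quadratic inner product appearing in \eqref{eq03-5+++}, such as $\langle\beta_{k_1}^{2},\beta_{k_1}\rangle$, $\langle\beta_{k_1}\beta_{k_2},\beta_{k_2}\rangle$, $\langle\beta_{k_2}^{2},\beta_{k_1}\rangle$, vanishes because $k_1\neq 0$, which instantly gives $a_{11}=a_{23}=b_{12}=0$. The cubic inner products $\langle\beta_{k_1}^{3},\beta_{k_1}\rangle$, $\langle\beta_{k_1}\beta_{k_2}^{2},\beta_{k_1}\rangle$, and $\langle\beta_{k_1}^{2}\beta_{k_2},\beta_{k_2}\rangle$ all collapse to $\tfrac{3}{2}$. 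All the ``mixed'' third-order summands in \eqref{eq03-5++++} and \eqref{eq03-5+++++} that are not of the form $\tfrac{1}{2}\psi_j(0)C_{\ast\ast\ast}$ contain at least one vanishing quadratic inner product as a factor, so they drop out.

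For part (ii), I would observe that every quadratic forcing term in the defining equations for $h_{ijk}$ has the spatial profile $\beta_{k_i}\beta_{k_j}$ with $k_i,k_j\in\{k_1,k_2\}=\{k_1\}$, so the forcing lies entirely in the two-dimensional span $\langle \beta_{0},\beta_{2k_1}\rangle$ thanks to $\beta_{k_1}^{2}=\beta_{0}+\tfrac{1}{\sqrt{2}}\beta_{2k_1}$. Consequently $h_{q}(\theta)(x)=h_{q}^{0}(\theta)\beta_{0}(x)+h_{q}^{2k_1}(\theta)\beta_{2k_1}(x)$ is exact. Because $k_1\neq 0$, the modes $\beta_{0}$ and $\beta_{2k_1}$ are non-critical under (H4), so the operators $\lambda I-\int_{-r}^{0}e^{\lambda\theta}\,\mathrm{d}\eta_{k}(\theta)$ for $k\in\{0,2k_1\}$ and $\lambda\in\{0,\pm i\omega_{0},\pm 2i\omega_{0}\}$ are all invertible; no resonance correction in $\theta$ is needed and each component is a pure exponential in $\theta$ with constant prefactor obtained by inverting the corresponding operator. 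This produces precisely the formulas in \eqref{eq03-2}. Finally, since $k_i=k_j=k_1$ in Case (2), one has $\langle Q_{\phi h_{q}}\beta_{k_i},\beta_{k_j}\rangle=Q_{\phi}\langle h_{q},\beta_{k_1}^{2}\rangle=Q_{\phi}\bigl(h_{q}^{0}+\tfrac{1}{\sqrt{2}}h_{q}^{2k_1}\bigr)$.

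Combining the surviving terms of Theorem \ref{g3} with the inner-product values of part (i) and the mode-expansion formula of part (ii) yields \eqref{eq03-2++}. The main obstacle I anticipate is bookkeeping: tracking the $\sqrt{2}$ normalization factors throughout the cubic coefficients and verifying term-by-term that each vanishing-inner-product simplification indeed applies to every occurrence in the long expressions \eqref{eq03-5++++}, \eqref{eq03-5+++++}. A secondary subtlety is the non-resonance claim for $h_{q}^{0}$ and $h_{q}^{2k_1}$: one has to invoke (H4) to rule out accidental coincidences of $0$ or $\pm i\omega_{0}$ (and $\pm 2i\omega_{0}$) with eigenvalues of $\triangle_{0}(\lambda)$ or $\triangle_{2k_1}(\lambda)$, which is exactly what (H4) forbids.
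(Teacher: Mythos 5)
Your proposal is correct and follows essentially the same route as the paper: the paper likewise specializes the general formulas \eqref{eq03-5+++}--\eqref{eq03-5+++++} by computing the eigenfunction products $\beta_{k_1}^2=\beta_0+\tfrac{1}{\sqrt{2}}\beta_{2k_1}$ and $\beta_{k_1}^3=\tfrac{3}{2}\beta_{k_1}+\tfrac12\beta_{3k_1}$ (so the quadratic inner products vanish and the cubic ones give $\tfrac32$), and then solves \eqref{eq16+2}--\eqref{eq16+3-} mode by mode on $\mathrm{span}\{\beta_0,\beta_{2k_1}\}$ to obtain \eqref{eq03-2}, with invertibility of the relevant operators guaranteed by {\bf (H4)}. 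Your observations about the surviving terms, the $\sqrt{2}$ normalizations, and the non-resonance of the $0$ and $2k_1$ modes all match the paper's computation.
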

	
	\noindent{\bf Case (3) $k_2=0$, $k_1\neq 0$.}
	
Here we have
	$$
	\begin{array}{rl}
	\langle\beta_{k_1}^2,\beta_{k_1}\rangle=& \langle\beta_{k_2}^2,\beta_{k_1} \rangle=\langle\beta_{k_1}\beta_{k_2},\beta_{k_2} \rangle=0,\\
	\langle\beta_{k_1}^2,\beta_{k_2}\rangle=& \langle\beta_{k_2}^2,\beta_{k_2} \rangle=\langle\beta_{k_1}\beta_{k_2},\beta_{k_1} \rangle=1,\\
	
	\langle\beta_{k_1}^3,\beta_{k_1}\rangle=&\frac{3}{2},~ \langle\beta_{k_2}^3,\beta_{k_2} \rangle=\langle\beta_{k_1}\beta_{k_2}^2,\beta_{k_1} \rangle=\langle\beta_{k_1}^2\beta_{k_2},\beta_{k_2} \rangle=1,\\
	
	\langle Q_{\phi_1 h_{200}}\beta_{k_1},\beta_{k_1}\rangle=&Q_{\phi_1} ( h_{200}^{0}+\frac{1}{\sqrt{2}}h_{200}^{2k_1}),~~~\langle Q_{\phi_1 h_{011}}\beta_{k_1},\beta_{k_1}\rangle=Q_{\phi_1} ( h_{011}^{0}+\frac{1}{\sqrt{2}}h_{011}^{2k_1}),\\
	
	\langle Q_{\phi_2 h_{101}}\beta_{k_2},\beta_{k_1}\rangle=&Q_{\phi_2}  h_{101}^{k_1},~~~~~\langle Q_{\bar{\phi}_2 h_{110}}\beta_{k_2},\beta_{k_1}\rangle=Q_{\bar{\phi}_2}  h_{110}^{k_1},\\
	
	\langle Q_{\phi_1 h_{101}}\beta_{k_1},\beta_{k_2}\rangle=&Q_{\phi_1}  h_{101}^{k_1},~~~~~\langle Q_{\phi_1 h_{110}}\beta_{k_1},\beta_{k_2}\rangle=Q_{\phi_1}  h_{110}^{k_1},\\
	
	\langle Q_{\phi_2 h_{200}}\beta_{k_2},\beta_{k_2}\rangle=&Q_{\phi_2}  h_{200}^{0},~~~~~\langle Q_{\phi_2 h_{011}}\beta_{k_2},\beta_{k_2}\rangle=Q_{\phi_2}  h_{011}^{0},\\
	
	\langle Q_{\bar{\phi}_2 h_{200}}\beta_{k_2},\beta_{k_2}\rangle=&Q_{\bar{\phi}_2}  h_{200}^{0},~~~~~\langle Q_{\bar{\phi}_2 h_{011}}\beta_{k_2},\beta_{k_2}\rangle=Q_{\bar{\phi}_2}  h_{011}^{0},\\
	
	\langle Q_{\bar{\phi}_2 h_{020}}\beta_{k_2},\beta_{k_2}\rangle=&Q_{\bar{\phi}_2}  h_{020}^{0},~~~~~\langle Q_{\phi_2 h_{002}}\beta_{k_2},\beta_{k_2}\rangle=Q_{\phi_2}  h_{002}^{0},
	\end{array}
	$$
and
	\begin{equation}\label{eq03-3}
	\begin{array}{rlc}
	h_{200}^0(\theta)=&-\frac{1}{2}[\int_{-r}^0\mathrm{d}\eta _0(\theta)]^{-1}Q_{\phi_1\phi_1}+\frac{1}{2i\omega_0}(\phi_2(\theta)\psi_2(0)-\bar{\phi}_2(\theta)\bar{\psi}_2(0))]Q_{\phi_1\phi_1},\\
	
	h_{200}^{2k_1}(\theta)\equiv &-\frac{1}{2\sqrt{2}}[\int_{-r}^0\mathrm{d}\eta _{2k_1}(\theta)]^{-1}Q_{\phi_1\phi_1},\\
	
	h_{011}^0(\theta)=&-[\int_{-r}^0\mathrm{d}\eta _0(\theta)]^{-1}Q_{\phi_2\bar{\phi}_2}+\frac{1}{i\omega_0}(\phi_2(\theta)\psi_2(0)-\bar{\phi}_2(\theta)\bar{\psi}_2(0))]Q_{\phi_2\bar{\phi}_2},\\
	
	h_{011}^{2k_1}(\theta)=&0,\\
	
	 h_{020}^0(\theta)=&\frac{1}{2}[2i\omega_0I-\int_{-r}^0e^{2i\omega_0\theta}\mathrm{d}\eta _{0}(\theta)]^{-1}Q_{\phi_2\phi_2}e^{2i\omega_0\theta}-\frac{1}{2i\omega_0}[\phi_2(\theta)\psi_2(0)+\frac{1}{3}\bar{\phi}_2(\theta)\bar{\psi}_2(0)]Q_{\phi_2\phi_2},\\
	
	 h_{110}^{k_1}(\theta)=&[i\omega_0I-\int_{-r}^0e^{i\omega_0\theta}\mathrm{d}\eta _{k_1}(\theta)]^{-1}Q_{\phi_1\phi_2}e^{i\omega_0\theta}-\frac{1}{i\omega_0}\phi_1(0)\psi_1(0)Q_{\phi_1\phi_2},\\
	
	 h_{002}^0(\theta)=&\overline{h_{020}^0(\theta)},~~~h_{101}^{k_1}(\theta)=\overline{h_{110}^{k_1}(\theta)}.
	\end{array}
	\end{equation}
	where $\theta\in[-r,0]$.
	Then we have the following result.
	
\begin{pro}\label{pro:4.3} For $k_2=0$, $k_1\neq 0$ and Neumann boundary condition on spatial domain $\Omega =(0,l\pi),~l>0$, the parameters $a_{11},~a_{23},~a_{111},~a_{123},~b_{12},~b_{112},~b_{223}$ in \eqref{third} are
	\begin{equation}\label{eq03-3++}
	\begin{array}{rlc}
	a_{11}=&a_{23}=b_{12}=0,\\
	
	a_{111}=&\frac{1}{4}\psi_1 (0)C_{\phi_1\phi_1\phi_1}+\frac{1}{\omega_0}\psi_1 (0)\mathrm{Re}(\mathrm{i}Q_{\phi_1\phi_2}\psi_2 (0))Q_{\phi_1\phi_1}+\psi_1 (0) Q_{\phi_1} ( h_{200}^{0}+\frac{1}{\sqrt{2}}h_{200}^{2k_1}),\\
	
	a_{123}=&\psi_1 (0)C_{\phi_1\phi_2\bar{\phi}_2}+\frac{2}{\omega_0}\psi_1 (0)\mathrm{Re}(\mathrm{i}Q_{\phi_1\phi_2}\psi_2 (0))Q_{\phi_2\bar{\phi}_2}+\\
	&\psi_1 (0) [Q_{\phi_1} ( h_{011}^{0}+\frac{1}{\sqrt{2}}h_{011}^{2k_1})
	+Q_{\phi_2}  h_{101}^{k_1}
	+Q_{\bar{\phi}_2}  h_{110}^{k_1}],\\
	
	b_{112}=&\frac{1}{2}\psi_2 (0)C_{\phi_1\phi_1\phi_2}+\frac{1}{2i\omega_0}\psi_2 (0)\{2Q_{\phi_1\phi_1}\psi_1 (0) Q_{\phi_1\phi_2} +[-Q_{\phi_2\phi_2}\psi_2 (0)\\

    &+Q_{\phi_2\bar{\phi}_2}\bar{\psi}_2 (0)]Q_{\phi_1\phi_1} \}+\psi_2 (0) (Q_{\phi_1}  h_{110}^{k_1}+Q_{\phi_2}  h_{200}^{0}),\\
	
	b_{223}=&\frac{1}{2}\psi_2 (0)C_{\phi_2\phi_2\bar{\phi}_2}+\frac{1}{4i\omega_0}\psi_2 (0)\{
	\frac{2}{3}Q_{\bar{\phi}_2\bar{\phi}_2}\bar{\psi}_2 (0)Q_{\phi_2\phi_2}+[-2Q_{\phi_2\phi_2}\psi_2 (0)\\&
	
	+4Q_{\phi_2\bar{\phi}_2}\bar{\psi}_2 (0)]Q_{\phi_2\bar{\phi}_2}\}+\psi_2 (0) (Q_{\phi_2} h_{011}^0+Q_{\bar{\phi}_2} h_{020}^0).
	\end{array}
	\end{equation}
	\end{pro}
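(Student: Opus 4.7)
The plan is to specialize the abstract formulas \eqref{eq03-5+++}--\eqref{eq03-5+++++} of Theorem \ref{g3} to the one-dimensional Neumann setting with $k_2=0$ and $k_1\ne 0$. The proof reduces to three ingredients: (i) evaluating the scalar products $\langle\beta_{k_i}^a\beta_{k_j}^b,\beta_{k_\ell}\rangle$ that appear as coefficients; (ii) expanding $\langle Q_{\phi h_q}\beta_{k_i},\beta_{k_j}\rangle$ via the Fourier decomposition $h_q(x,\theta)=\sum_k h_q^k(\theta)\beta_k(x)$; and (iii) solving the defining equations \eqref{eq16+2} and \eqref{eq16+3-} for the relevant components $h_q^k$.

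First I would record the needed inner products. With $\beta_0\equiv 1$ and $\beta_{k_1}=\sqrt{2}\cos(k_1 x/l)$, a direct calculation yields $\langle\beta_{k_1}^2,\beta_{k_1}\rangle=\langle\beta_0^2,\beta_{k_1}\rangle=\langle\beta_{k_1}\beta_0,\beta_0\rangle=0$, which immediately forces $a_{11}=a_{23}=b_{12}=0$. The remaining nonzero pairings are $\langle\beta_{k_1}^2,\beta_0\rangle=\langle\beta_{k_1}\beta_0,\beta_{k_1}\rangle=1$, $\langle\beta_{k_1}^3,\beta_{k_1}\rangle=3/2$, and $\langle\beta_{k_1}\beta_{2k_1},\beta_{k_1}\rangle=1/\sqrt{2}$; the last of these produces the $1/\sqrt{2}$ factors accompanying every $2k_1$-mode in \eqref{eq03-3++}.

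Next I would identify, for each multi-index $q$ with $|q|=2$, which Fourier modes $h_q^k$ can be nonzero. The quadratic forcing for $h_q$ comes from products of the basis functions $\phi_1\beta_{k_1}$, $\phi_2\beta_0$, $\bar\phi_2\beta_0$, so the spatial factor is $\beta_{k_1}^2=\beta_0+\tfrac{1}{\sqrt{2}}\beta_{2k_1}$ for $h_{200}$, it is $\beta_0$ alone for $h_{011},h_{020},h_{002}$, and it is $\beta_{k_1}$ for $h_{110},h_{101}$. Feeding these truncated expansions into the general formulas and applying the inner products above reduces every bilinear bracket $\langle Q_{\phi h_q}\beta_{k_i},\beta_{k_j}\rangle$ to the combinations listed immediately before \eqref{eq03-3}.

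The main technical obstacle will be computing the $\theta$-dependent functions $h_{200}^0$, $h_{011}^0$, $h_{020}^0$ and $h_{110}^{k_1}$. Each of these sits in a wavenumber at which the linearization carries a center eigenvalue (either the Hopf pair $\pm\mathrm{i}\omega_0$ at wavenumber $0$, or the simple zero at wavenumber $k_1$), so the naive particular solution must be augmented by a correction of the form $\theta\phi(0)\psi(0)$ or $\phi(\theta)\psi(0)/(\mathrm{i}\omega_0)$ to cancel the unwanted projection onto the center subspace and force $h_q\in\mathcal{Q}$, exactly as in the $\phi_2,\bar\phi_2$-corrections seen in Case (1) and the analogous $\phi_1$-correction at the $k_1$-mode. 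By contrast $h_{200}^{2k_1}$ lives in a strictly non-resonant wavenumber and is recovered simply by inverting $\int_{-r}^0\mathrm{d}\eta_{2k_1}(\theta)$, as in Case (2); while $h_{011}^{2k_1}=0$ because $Q_{\phi_2\bar\phi_2}$ supplies no $\beta_{2k_1}$-component. Substituting the resulting expressions for $h_q^k$ back into \eqref{eq03-5+++}--\eqref{eq03-5+++++} and using the identity $\psi_2(0)Q_{XY}+\bar\psi_2(0)\overline{Q_{XY}}=2\mathrm{Re}(\psi_2(0)Q_{XY})$ to combine complex-conjugate pairs then yields \eqref{eq03-3++} after routine algebraic bookkeeping.
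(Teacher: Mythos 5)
Your proposal is correct and follows essentially the same route as the paper's Case (3) computation: evaluate the mode-coupling inner products (which kill $a_{11},a_{23},b_{12}$ and produce the $1/\sqrt{2}$ factors on the $2k_1$-modes), identify the nonzero Fourier components of each $h_q$ from the spatial factors $\beta_{k_1}^2=\beta_0+\tfrac{1}{\sqrt2}\beta_{2k_1}$, $\beta_0$, $\beta_{k_1}$, solve \eqref{eq16+2}--\eqref{eq16+3-} with the resonant corrections at wavenumbers $0$ and $k_1$, and substitute into \eqref{eq03-5+++}--\eqref{eq03-5+++++}. The only nitpick is that the conjugate pairs actually combine as $\tfrac{1}{2\mathrm{i}\omega_0}(-z+\bar z)=\tfrac{1}{\omega_0}\mathrm{Re}(\mathrm{i}z)$ rather than via the identity you quote, but this is exactly the routine bookkeeping you defer to and does not affect the argument.
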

	
	\noindent{\bf Case (4) $k_1=0$, $k_2\neq 0$.}
	
Here we have
	$$
	\begin{array}{rl}
	\langle\beta_{k_1}^2,\beta_{k_1}\rangle=& \langle\beta_{k_2}^2,\beta_{k_1} \rangle=\langle\beta_{k_1}\beta_{k_2},\beta_{k_2} \rangle=1,\\
	\langle\beta_{k_1}^2,\beta_{k_2}\rangle=& \langle\beta_{k_2}^2,\beta_{k_2} \rangle=\langle\beta_{k_1}\beta_{k_2},\beta_{k_1} \rangle=0,\\
	\langle\beta_{k_1}^3,\beta_{k_1}\rangle=& 1,~\langle\beta_{k_2}^3,\beta_{k_2} \rangle=\frac{3}{2},~\langle\beta_{k_1}\beta_{k_2}^2,\beta_{k_1} \rangle=\langle\beta_{k_1}^2\beta_{k_2},\beta_{k_2} \rangle=1,\\
	
	\langle Q_{\phi_1 h_{200}}\beta_{k_1},\beta_{k_1}\rangle=&Q_{\phi_1} h_{200}^{0},~~~~~\langle Q_{\phi_1 h_{011}}\beta_{k_1},\beta_{k_1}\rangle=Q_{\phi_1}  h_{011}^{0},\\
	
	\langle Q_{\phi_2 h_{101}}\beta_{k_2},\beta_{k_1}\rangle=&Q_{\phi_2}  h_{101}^{k_2},~~~~~\langle Q_{\bar{\phi}_2 h_{110}}\beta_{k_2},\beta_{k_1}\rangle=Q_{\bar{\phi}_2}  h_{110}^{k_2},\\
	
	\langle Q_{\phi_1 h_{101}}\beta_{k_1},\beta_{k_2}\rangle=&Q_{\phi_1}  h_{101}^{k_2},~~~~~\langle Q_{\phi_1 h_{110}}\beta_{k_1},\beta_{k_2}\rangle=Q_{\phi_1}  h_{110}^{k_2},\\
	
	\langle Q_{\phi_2 h_{200}}\beta_{k_2},\beta_{k_2}\rangle=&Q_{\phi_2} ( h_{200}^{0}+\frac{1}{\sqrt{2}}h_{200}^{2k_2}),~~~\langle Q_{\phi_2 h_{011}}\beta_{k_2},\beta_{k_2}\rangle=Q_{\phi_2} ( h_{011}^{0}+\frac{1}{\sqrt{2}}h_{011}^{2k_2}),\\
	
	\langle Q_{\bar{\phi}_2 h_{200}}\beta_{k_2},\beta_{k_2}\rangle=&Q_{\bar{\phi}_2} ( h_{200}^{0}+\frac{1}{\sqrt{2}}h_{200}^{2k_2}),~~~\langle Q_{\bar{\phi}_2 h_{011}}\beta_{k_2},\beta_{k_2}\rangle=Q_{\bar{\phi}_2} ( h_{011}^{0}+\frac{1}{\sqrt{2}}h_{011}^{2k_2}),\\
	
	\langle Q_{\bar{\phi}_2 h_{020}}\beta_{k_2},\beta_{k_2}\rangle=&Q_{\bar{\phi}_2} ( h_{020}^{0}+\frac{1}{\sqrt{2}}h_{020}^{2k_2}),~~~\langle Q_{\phi_2 h_{002}}\beta_{k_2},\beta_{k_2}\rangle=Q_{\phi_2}  ( h_{002}^{0}+\frac{1}{\sqrt{2}}h_{002}^{2k_2}),
	\end{array}
	$$
and	
	\begin{equation}\label{eq03-4}
	\begin{array}{rlc}
	 h_{200}^0(\theta)=&\frac{1}{2}\theta\phi_1(0)\psi_1(0)Q_{\phi_1\phi_1}+E_{200},\;\;
	h_{200}^{2k_2}(\theta)\equiv 0,\\
	
	 h_{011}^0(\theta)=&\theta\phi_1(0)\psi_1(0)Q_{\phi_2\bar{\phi}_2}+E_{011},\;\;
	h_{011}^{2k_2}(\theta)\equiv -\frac{1}{\sqrt{2}}[\int_{-r}^0\mathrm{d}\eta _{2k_2}(\theta)]^{-1}Q_{\phi_2\bar{\phi}_2},\\
	
	 h_{020}^0(\theta)=&-\frac{1}{4i\omega_0}\phi_1(0)\psi_1(0)Q_{\phi_2\phi_2}+E_{020}e^{2i\omega_0\theta},\\
	h_{020}^{2k_2}(\theta)= &\frac{1}{2\sqrt{2}}[2i\omega_0I-\int_{-r}^0e^{2i\omega_0\theta}\mathrm{d}\eta _{2k_2}(\theta)]^{-1}Q_{\phi_2\phi_2}e^{2i\omega_0\theta},\\
	
	 h_{110}^{k_2}(\theta)=&\frac{1}{i\omega_0}[i\omega_0\theta\phi_2(\theta)\psi_2(0)-\frac{1}{2}\bar{\phi}_2(\theta)\bar{\psi}_2(0))]Q_{\phi_1\phi_2}+E_{110}e^{i\omega_0\theta},\\
	
	h_{002}^0(\theta)=&\overline{h_{020}^0(\theta)},~~~
	h_{002}^{2k_2}(\theta)= \overline{h_{020}^{2k_2}(\theta)},~~~h_{101}^{k_2}(\theta)=\overline{h_{110}^{k_2}(\theta)},
	\end{array}
	\end{equation}
	where $\theta\in[-r,0]$ and the constant vectors $E_q^0$ for $q\in \mathbb{N}_0^3$ with $|q|=2$, satisfy the following equations
	\begin{equation}\label{eq03-4+}
	\begin{array}{rlc}
	 [\int_{-r}^0\mathrm{d}\eta_0(\theta)]E_{200}=&\frac{1}{2}[-I+(I-\int_{-r}^0\theta \mathrm{d}\eta_0(\theta))\phi_1(0)\psi_1(0)]Q_{\phi_1\phi_1},\\
	
	[\int_{-r}^0 \mathrm{d} \eta_0(\theta)]E_{011}=&[-I+(I-\int_{-r}^0\theta \mathrm{d} \eta_0(\theta))\phi_1(0)\psi_1(0)]Q_{\phi_2\bar{\phi}_2},\\
	
	E_{020}=&\frac{1}{2}[2i\omega_0I-\int_{-r}^0e^{2i\omega_0\theta} \mathrm{d} \eta_0(\theta)]^{-1}Q_{\phi_2\phi_2},\\
	
	 [i\omega_0I-\int_{-r}^0e^{i\omega_0\theta}\mathrm{d}\eta_{k_2}(\theta)]E_{110}=&[I-\phi_2(0)\psi_2(0)+\int_{-r}^0\theta \mathrm{d}\eta_{k_2}(\theta)\phi_2(\theta)\psi_2(0)]Q_{\phi_1\phi_2}.
	\end{array}
	\end{equation}
	Then we have the following result.
	
\begin{pro}\label{pro:4.4} For $k_1=0$, $k_2\neq 0$ and Neumann boundary condition on spatial domain $\Omega =(0,l\pi),~l>0$, the parameters $a_{11},~a_{23},~a_{111},~a_{123},~b_{12},~b_{112},~b_{223}$ in \eqref{third} are
	\begin{equation}\label{eq03-4++}
	\begin{array}{rlc}
	a_{11}=&\frac{1}{2}\psi_1 (0)Q_{\phi_1\phi_1},\;\;
	a_{23}=\psi_1 (0)Q_{\bar{\phi}_2\phi_2},\;\;
	b_{12}=\psi_2 (0)Q_{\phi_1\phi_2},\\
	
	a_{111}=&\frac{1}{6}\psi_1 (0)C_{\phi_1\phi_1\phi_1}+\psi_1 (0) Q_{\phi_1} h_{200}^0,\\
	
	a_{123}=&\psi_1 (0)C_{\phi_1\phi_2\bar{\phi}_2}+\frac{1}{\omega_0}\psi_1 (0)\mathrm{Re}(\mathrm{i}Q_{\phi_2\phi_2}\psi_2 (0)Q_{\phi_1\bar{\phi}_2})+\psi_1 (0) (Q_{\phi_1} h_{011}^0+Q_{\phi_2} h_{101}^{k_2}+Q_{\bar{\phi}_2} h_{110}^{k_2}),\\
	
	b_{112}=&\frac{1}{2}\psi_2 (0)C_{\phi_1\phi_1\phi_2}+\frac{1}{2i\omega_0}\psi_2 (0)Q_{\phi_1\bar{\phi}_2}\bar{\psi}_2 (0)Q_{\phi_1\phi_2}
	+\psi_2 (0) [Q_{\phi_1} h_{110}^{k_2}+Q_{\phi_2} ( h_{200}^{0}+\frac{1}{\sqrt{2}}h_{200}^{2k_2})],\\
	
	b_{223}=&\frac{3}{4}\psi_2 (0)C_{\phi_2\phi_2\bar{\phi}_2}+\frac{1}{4i\omega_0}\psi_2 (0)Q_{\phi_1\bar{\phi}_2}\psi_1 (0)Q_{\phi_2\phi_2} +\psi_2 (0) [Q_{\phi_2} ( h_{011}^{0}+\frac{1}{\sqrt{2}}h_{011}^{2k_2})\\
	&+Q_{\bar{\phi}_2} ( h_{020}^{0}+\frac{1}{\sqrt{2}}h_{020}^{2k_2})].
	\end{array}
	\end{equation}
	\end{pro}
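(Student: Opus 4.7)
The result specializes Theorem \ref{g3} to the case $k_1=0$, $k_2\neq 0$ under Neumann boundary conditions on $\Omega=(0,l\pi)$, and my plan is to (i) compute all needed inner products of the eigenbasis, (ii) solve mode-by-mode for the auxiliary functions $h_{ijk}$, and (iii) substitute into the master formulas \eqref{eq03-5+++}--\eqref{eq03-5+++++}.

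For step (i), since $\beta_0\equiv 1$ and $\beta_n(x)=\sqrt{2}\cos(nx/l)$, the cosine product-to-sum identities give $\beta_{k_2}^2=\beta_0+\tfrac{1}{\sqrt 2}\beta_{2k_2}$ and $\beta_{k_2}^3=\tfrac32\beta_{k_2}+\tfrac12\beta_{3k_2}$. From these, every bracket $\langle\beta_{k_i}^a\beta_{k_j}^b,\beta_{k_\ell}\rangle$ appearing in Theorem \ref{g3} reduces to one of the displayed values listed just above \eqref{eq03-4}; in particular $\langle\beta_{k_1}\beta_{k_2},\beta_{k_2}\rangle=1$ and $\langle\beta_{k_1}^2,\beta_{k_2}\rangle=0$, so that products of the form $\langle Q_{\phi h_q}\beta_{k_i},\beta_{k_j}\rangle$ collapse to finite sums over only the modes $m\in\{0,k_2,2k_2\}$ that actually appear in the expansion of $h_q$.

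For step (ii), I expand $h_{ijk}(\theta,x)=\sum_m h_{ijk}^m(\theta)\beta_m(x)$ so that the defining equations \eqref{eq16+2} and \eqref{eq16+3-} decouple across Fourier modes. At every non-resonant mode, the relevant characteristic matrix is invertible and $h_{ijk}^m$ is the unique pure-exponential particular solution; this produces the $E_{020}e^{2\mathrm i\omega_0\theta}$, $h_{020}^{2k_2}$, and $h_{011}^{2k_2}$ entries of \eqref{eq03-4}. At the resonant modes — namely mode $0$ at frequency $0$ for $h_{200}^0$ and $h_{011}^0$, and mode $k_2$ at frequency $\mathrm i\omega_0$ for $h_{110}^{k_2}$ — the solvability condition forces extra contributions linear in $\theta$ and proportional to the center eigenfunctions, giving the $\tfrac12\theta\phi_1(0)\psi_1(0)Q_{\phi_1\phi_1}$, $\theta\phi_1(0)\psi_1(0)Q_{\phi_2\bar\phi_2}$ and $\mathrm i\omega_0\theta\phi_2(\theta)\psi_2(0)Q_{\phi_1\phi_2}$ pieces. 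The matching conditions at $\theta=0$ then determine $E_{200},E_{011},E_{020},E_{110}$ via \eqref{eq03-4+}, while $h_{002}$ and $h_{101}$ are obtained by conjugation.

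Finally, inserting the bracket values from (i) and the $h_{ijk}^m$ from (ii) into \eqref{eq03-5+++}--\eqref{eq03-5+++++} and simplifying yields \eqref{eq03-4++}: $a_{11},a_{23},b_{12}$ pick up only the brackets $\langle\beta_0^2,\beta_0\rangle=\langle\beta_{k_2}^2,\beta_0\rangle=\langle\beta_0\beta_{k_2},\beta_{k_2}\rangle=1$; and combinations such as $-Q_{\phi_1\phi_2}\psi_2(0)+Q_{\phi_1\bar\phi_2}\bar\psi_2(0)$ in the third-order coefficients are rewritten as $2\mathrm i\,\mathrm{Re}(\mathrm iQ_{\phi_1\phi_2}\psi_2(0))$, yielding the $\tfrac{1}{\omega_0}\mathrm{Re}(\cdots)$ terms in $a_{111}$, $a_{123}$, and $b_{223}$. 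The main obstacle is purely bookkeeping: tracking precisely which Fourier modes appear in each $Q_{\phi h_q}$ contribution, carrying the $1/\sqrt 2$ normalizations correctly, and enforcing the conjugacy relations $h_{002}^m=\overline{h_{020}^m}$ and $h_{101}^m=\overline{h_{110}^m}$ consistently so the final formulas display the conjugate symmetry mandated by the reality of the original system.
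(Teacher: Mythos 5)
Your overall strategy is exactly the paper's: compute the inner products of the $\beta_k$, solve \eqref{eq16+2} and \eqref{eq16+3-} mode by mode for the $h_q^m$ (with the resonant/center modes producing the $\theta$-linear pieces and \eqref{eq03-4+} fixing the constants $E_q$), and substitute into \eqref{eq03-5+++}--\eqref{eq03-5+++++}. Steps (i) and (ii) are sound, including the identification of the relevant modes $\{0,k_2,2k_2\}$ and of the resonances at mode $0$ (eigenvalue $0$) and mode $k_2$ (eigenvalue $\mathrm{i}\omega_0$).

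However, your description of step (iii) contains a concrete error that would produce wrong formulas if carried out as written. In the master expressions, the combination $-Q_{\phi_1\phi_2}\psi_2(0)+Q_{\phi_1\bar\phi_2}\bar\psi_2(0)$ is multiplied by $\langle\beta_{k_1}\beta_{k_2},\beta_{k_1}\rangle\langle\beta_{k_1}^2,\beta_{k_2}\rangle$ in $a_{111}$ and by $\langle\beta_{k_1}\beta_{k_2},\beta_{k_1}\rangle\langle\beta_{k_2}^2,\beta_{k_2}\rangle$ in $a_{123}$; for $k_1=0$, $k_2\neq 0$ both products vanish (since $\langle\beta_0,\beta_{k_2}\rangle=0$ and $\langle\beta_{k_2}^2,\beta_{k_2}\rangle=0$), so that combination contributes nothing here. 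Consequently $a_{111}$ in \eqref{eq03-4++} has \emph{no} $\frac{1}{\omega_0}\mathrm{Re}(\cdots)$ term at all; the only surviving $\mathrm{Re}$-type term is the one in $a_{123}$, and it comes from the different pairing $-Q_{\phi_2\phi_2}\psi_2(0)Q_{\phi_1\bar\phi_2}+Q_{\bar\phi_2\bar\phi_2}\bar\psi_2(0)Q_{\phi_1\phi_2}$ weighted by $\langle\beta_{k_1}\beta_{k_2},\beta_{k_2}\rangle\langle\beta_{k_2}^2,\beta_{k_1}\rangle=1$. Likewise the second-order corrections that survive in $b_{112}$ and $b_{223}$ are the single terms $\frac{1}{2\mathrm{i}\omega_0}\psi_2(0)Q_{\phi_1\bar\phi_2}\bar\psi_2(0)Q_{\phi_1\phi_2}$ and $\frac{1}{4\mathrm{i}\omega_0}\psi_2(0)Q_{\phi_1\bar\phi_2}\psi_1(0)Q_{\phi_2\phi_2}$, which are not of the $\mathrm{Re}(\mathrm{i}\,\cdot)$ form because their conjugate partners are killed by $\langle\beta_{k_2}^2,\beta_{k_2}\rangle=0$. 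A smaller imprecision: on the center modes the solutions are not ``pure exponential'' even off resonance --- e.g.\ $h_{020}^0$ carries the constant piece $-\frac{1}{4\mathrm{i}\omega_0}\phi_1(0)\psi_1(0)Q_{\phi_2\phi_2}$ because $f_2^2=(I-\pi)X_0F_2$ subtracts $\Phi\Psi(0)\langle F_2,\beta_{k_i}\rangle\beta_{k_i}$ on all of $[-r,0]$, not only at $\theta=0$. You appear to have pattern-matched the $\mathrm{Re}$-structure from Cases (1) and (3) (where $k_2=0$) instead of tracking which brackets vanish in the present case; redoing the substitution with the correct bracket values recovers \eqref{eq03-4++}.
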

	\noindent{\bf Case (5) $k_1\neq k_2$, $k_1,k_2\neq0$.}
	
Here we have
	$$
	\begin{array}{rl}
	\langle\beta_{k_1}^2,\beta_{k_1}\rangle=&0,~ \langle\beta_{k_2}^2,\beta_{k_1} \rangle=\langle\beta_{k_1}\beta_{k_2},\beta_{k_2} \rangle=\frac{1}{\sqrt{2}}\delta(k_1-2k_2),\\
	
	\langle\beta_{k_2}^2,\beta_{k_2} \rangle=&0,~\langle\beta_{k_1}^2,\beta_{k_2}\rangle=\langle\beta_{k_1}\beta_{k_2},\beta_{k_1} \rangle=\frac{1}{\sqrt{2}}\delta(k_2-2k_1),\\
	
	\langle\beta_{k_1}^3,\beta_{k_1}\rangle=&\langle\beta_{k_2}^3,\beta_{k_2} \rangle=\frac{3}{2},~\langle\beta_{k_1}\beta_{k_2}^2,\beta_{k_1} \rangle=\langle\beta_{k_1}^2\beta_{k_2},\beta_{k_2} \rangle=1,\\
	
	\langle Q_{\phi_1 h_{200}}\beta_{k_1},\beta_{k_1}\rangle=&\frac{1}{\sqrt{2}}Q_{\phi_1}h_{200}^{2k_1}+Q_{\phi_1}h_{200}^{0},\;\;

\langle Q_{\phi_1 h_{011}}\beta_{k_1},\beta_{k_1}\rangle=\frac{1}{\sqrt{2}}Q_{\phi_1}h_{011}^{2k_1}+Q_{\phi_1}h_{011}^{0},\\
	
	\langle Q_{\phi_2 h_{101}}\beta_{k_2},\beta_{k_1}\rangle=&Q_{\phi_2} (\frac{1}{\sqrt{2}} h_{101}^{|k_1-k_2|}+\frac{1}{\sqrt{2}} h_{101}^{k_1+k_2}+h_{101}^{0}),\\
	
	\langle Q_{\bar{\phi}_2 h_{110}}\beta_{k_2},\beta_{k_1}\rangle=&Q_{\bar{\phi}_2}  (\frac{1}{\sqrt{2}} h_{110}^{|k_1-k_2|}+\frac{1}{\sqrt{2}} h_{110}^{k_1+k_2}+h_{110}^{0}),\\
	
	\langle Q_{\phi_1 h_{101}}\beta_{k_1},\beta_{k_2}\rangle=&Q_{\phi_1} (\frac{1}{\sqrt{2}} h_{101}^{|k_1-k_2|}+\frac{1}{\sqrt{2}} h_{101}^{k_1+k_2}+h_{101}^{0}),\\
	
	\langle Q_{\phi_1 h_{110}}\beta_{k_1},\beta_{k_2}\rangle=&Q_{\phi_1}  (\frac{1}{\sqrt{2}} h_{110}^{|k_1-k_2|}+\frac{1}{\sqrt{2}} h_{110}^{k_1+k_2}+h_{110}^{0}),\\
	
	\langle Q_{\phi_2 h_{200}}\beta_{k_2},\beta_{k_2}\rangle=&\frac{1}{\sqrt{2}}Q_{\phi_2}  h_{200}^{2k_2}+Q_{\phi_2}  h_{200}^{0},\;\;

\langle Q_{\phi_2 h_{011}}\beta_{k_2},\beta_{k_2}\rangle=\frac{1}{\sqrt{2}}Q_{\phi_2}  h_{011}^{2k_2}+Q_{\phi_2}  h_{011}^{0},\\
	
	\langle Q_{\bar{\phi}_2 h_{200}}\beta_{k_2},\beta_{k_2}\rangle=&\frac{1}{\sqrt{2}}Q_{\bar{\phi}_2}  h_{200}^{2k_2}+Q_{\bar{\phi}_2}  h_{200}^{0},\;\;

\langle Q_{\bar{\phi}_2 h_{011}}\beta_{k_2},\beta_{k_2}\rangle=\frac{1}{\sqrt{2}}Q_{\bar{\phi}_2}  h_{011}^{2k_2}+Q_{\bar{\phi}_2}  h_{011}^{0},\\
	
	\langle Q_{\bar{\phi}_2 h_{020}}\beta_{k_2},\beta_{k_2}\rangle=&\frac{1}{\sqrt{2}}Q_{\bar{\phi}_2}  h_{020}^{2k_2}+Q_{\bar{\phi}_2}  h_{020}^{0},\;\;

\langle Q_{\phi_2 h_{002}}\beta_{k_2},\beta_{k_2}\rangle=\frac{1}{\sqrt{2}}Q_{\phi_2}  h_{002}^{2k_2}+Q_{\phi_2}  h_{002}^{0},
	\end{array}
	$$
	where $\delta(x)=1,~\mathrm{for}~x=0~\mathrm{and}~\delta(x)=0,~\mathrm{for}~x\neq0 $. 
And $ h_{200}^{2k_1},~ h_{200}^{2k_2},~h_{011}^{2k_1},~h_{011}^{2k_2},~h_{101}^{0},~h_{101}^{|k_1-k_2|}$, $h_{101}^{k_1+k_2},h_{110}^{0},~h_{110}^{|k_1-k_2|},~h_{110}^{k_1+k_2},~h_{020}^{2k_2},~h_{002}^{2k_2}$ are given by
	\begin{equation}\label{eq03-5}
	\begin{array}{rlc}
	h_{200}^0(\theta)=&-\frac{1}{2}[\int_{-r}^0\mathrm{d}\eta _0(\theta)]^{-1}Q_{\phi_1\phi_1}+\frac{1}{2i\omega_0}(\phi_2(\theta)\psi_2(0)-\bar{\phi}_2(\theta)\bar{\psi}_2(0))]Q_{\phi_1\phi_1},\\
	
	h_{200}^{2k_1}(\theta)\equiv &-\frac{1}{2\sqrt{2}}[\int_{-r}^0\mathrm{d}\eta _{2k_1}(\theta)]^{-1}Q_{\phi_1\phi_1},\\
	h_{200}^{2k_2}(\theta)\equiv &0,~~~h_{011}^{2k_1}\equiv 0,\\
	h_{011}^{2k_2}(\theta)=&-\frac{1}{\sqrt{2}}[\int_{-r}^0\mathrm{d}\eta _{2k_2}(\theta)]^{-1}Q_{\phi_2\bar{\phi}_2},\\

	 h_{020}^{2k_2}(\theta)=&\frac{1}{2\sqrt{2}}[2i\omega_0I-\int_{-r}^0e^{2i\omega_0\theta}\mathrm{d}\eta _{2k_2}(\theta)]^{-1}Q_{\phi_2\phi_2}e^{2i\omega_0\theta},\\
	
	 h_{110}^{|k_1-k_2|}(\theta)=&\frac{1}{\sqrt{2}}[i\omega_0I-\int_{-r}^0e^{i\omega_0\theta}\mathrm{d}\eta _{|k_1-k_2|}(\theta)]^{-1}Q_{\phi_1\phi_2}e^{i\omega_0\theta},\\
	
	 h_{110}^{k_1+k_2}(\theta)=&\frac{1}{\sqrt{2}}[i\omega_0I-\int_{-r}^0e^{i\omega_0\theta}\mathrm{d}\eta _{k_1+k_2}(\theta)]^{-1}Q_{\phi_1\phi_2}e^{i\omega_0\theta},\\
	
	h_{110}^{0}(\theta)\equiv &0,~~~
	h_{101}^{0}(\theta)\equiv 0,\\
	
	 h_{002}^{2k_2}(\theta)=&\overline{h_{020}^{2k_2}(\theta)},~~~h_{101}^{|k_1-k_2|}(\theta)=\overline{h_{110}^{|k_1-k_2|}(\theta)},~~~
	
	h_{101}^{k_1+k_2}(\theta)=\overline{h_{110}^{k_1+k_2}(\theta)}.
	\end{array}
	\end{equation}
	where $\theta\in[-r,0]$.
	Thus we have the following result.

\begin{pro}\label{pro:4.5} For $k_1\neq k_2$, $k_1,k_2\neq0$ and Neumann boundary condition on spatial domain $\Omega =(0,l\pi),~l>0$, the parameters $a_{11},~a_{23},~a_{111},~a_{123},~b_{12},~b_{112},~b_{223}$ in \eqref{third} are
	\begin{equation}\label{eq03-5++}
	\begin{array}{rlc}
	a_{11}=&0,\;\;
	a_{23}=\frac{1}{\sqrt{2}}\delta(k_1-2k_2)\psi_1 (0)Q_{\bar{\phi}_2\phi_2},\;\;
	b_{12}=\frac{1}{\sqrt{2}}\delta(k_1-2k_2)\psi_2 (0)Q_{\phi_1\phi_2},\\
	
	a_{111}=&\frac{1}{4}\psi_1 (0)C_{\phi_1\phi_1\phi_1}+\frac{1}{2\omega_0}\delta(k_2-2k_1)\psi_1 (0)\mathrm{Re}(\mathrm{i}Q_{\phi_1\phi_2}\psi_2 (0))Q_{\phi_1\phi_1}+\frac{1}{\sqrt{2}}\psi_1 (0) Q_{\phi_1}h_{200}^{2k_1},\\
	
	a_{123}=&\psi_1 (0)C_{\phi_1\phi_2\bar{\phi}_2}+\frac{1}{2\omega_0}\psi_1 (0)\delta(k_1-2k_2)\mathrm{Re}(\mathrm{i}Q_{\phi_2\phi_2}\psi_2 (0)Q_{\phi_1\bar{\phi}_2})+\psi_1 (0) [\frac{1}{\sqrt{2}}Q_{\phi_1} h_{011}^{2k_1}+\\
	
	&Q_{\phi_2} (\frac{1}{\sqrt{2}} h_{101}^{|k_1-k_2|}+\frac{1}{\sqrt{2}} h_{101}^{k_1+k_2}+h_{101}^{0})+Q_{\bar{\phi}_2}  (\frac{1}{\sqrt{2}} h_{110}^{|k_1-k_2|}+\frac{1}{\sqrt{2}} h_{110}^{k_1+k_2}+h_{110}^{0})],\\
	
	b_{112}=&\frac{1}{2}\psi_2 (0)C_{\phi_1\phi_1\phi_2}+\frac{1}{4i\omega_0}\psi_2 (0)[2\delta(k_2-2k_1)Q_{\phi_1\phi_1}\psi_1 (0) +\delta(k_1-2k_2)Q_{\phi_1\bar{\phi}_2}\bar{\psi}_2 (0)]Q_{\phi_1\phi_2} \\&
	
	+\psi_2 (0) [Q_{\phi_1} (\frac{1}{\sqrt{2}} h_{110}^{|k_1-k_2|}+\frac{1}{\sqrt{2}} h_{110}^{k_1+k_2}+h_{110}^{0})+\frac{1}{\sqrt{2}}Q_{\phi_2}  h_{200}^{2k_2}],\\
	
	b_{223}=&\frac{3}{4}\psi_2 (0)C_{\phi_2\phi_2\bar{\phi}_2}+\frac{1}{8i\omega_0}\delta(k_1-2k_2)\psi_2 (0)Q_{\phi_1\bar{\phi}_2}\psi_1 (0)Q_{\phi_2\phi_2} \\&
+\psi_2 (0) [Q_{\phi_2}  (h_{011}^{0}+ \frac{1}{\sqrt{2}}h_{011}^{2k_2})+Q_{\bar{\phi}_2}  (h_{020}^{0}+\frac{1}{\sqrt{2}}h_{020}^{2k_2})].
	\end{array}
	\end{equation}
	\end{pro}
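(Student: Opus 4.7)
The plan is to specialize the general normal form formulas of Theorem \ref{g3} to the specific situation $k_1 \neq k_2$ with both $k_1, k_2 > 0$ on $\Omega = (0, l\pi)$ subject to Neumann boundary conditions. Since the expressions in \eqref{eq03-5+++}, \eqref{eq03-5++++}, \eqref{eq03-5+++++} involve only (i) Fréchet derivatives of the nonlinearity, (ii) the eigenfunctions $\phi_i, \psi_i$, (iii) inner products of triple products of basis functions against a single basis function, and (iv) the Fourier components $h_{ijk}^k$ of the center-manifold correction, it suffices to evaluate items (iii) and (iv) explicitly in this geometric setting.

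First I would tabulate all the required triple inner products. Using $\beta_0 \equiv 1$ and $\beta_n(x) = \sqrt{2}\cos(nx/l)$, together with the product-to-sum identity $2\cos A\cos B = \cos(A-B) + \cos(A+B)$, one gets $\beta_m\beta_n = \frac{1}{\sqrt{2}}(\beta_{|m-n|} + \beta_{m+n})$ whenever $m, n \geq 1$ (with the convention that $\beta_{|m-n|}$ becomes $\sqrt{2}\beta_0$ when $m = n$). Expanding each $\beta_{k_i}^2$, $\beta_{k_1}\beta_{k_2}$, $\beta_{k_1}\beta_{k_2}^2$, $\beta_{k_1}^2\beta_{k_2}$, $\beta_{k_i}^3$ and then applying orthonormality $\langle \beta_p, \beta_q \rangle = \delta_{pq}$ yields exactly the inner-product values listed just before \eqref{eq03-5}. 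The resonance indicators $\delta(k_1 - 2k_2)$ and $\delta(k_2 - 2k_1)$ appear precisely because $\langle \beta_{k_2}^2, \beta_{k_1}\rangle = \frac{1}{\sqrt{2}}\delta(k_1 - 2k_2)$ and $\langle \beta_{k_1}^2, \beta_{k_2}\rangle = \frac{1}{\sqrt{2}}\delta(k_2 - 2k_1)$, while $\langle \beta_{k_i}^2, \beta_{k_i}\rangle = 0$ for $k_i \neq 0$.

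Next I would compute the Fourier components $h_{ijk}^p$. These are determined by \eqref{eq16+2}--\eqref{eq16+3-}, which, after projecting onto the mode $\beta_p$, become linear algebraic equations (for time-independent modes) or simple linear ODEs with exponential forcing (for $h_{020}, h_{110}$ and their conjugates). Because $\beta_{k_1}^2 = \frac{1}{\sqrt{2}}\beta_{2k_1} + \beta_0$, the forcing $Q_{\phi_1\phi_1}\beta_{k_1}^2$ only excites modes $0$ and $2k_1$, producing $h_{200}^0$ and $h_{200}^{2k_1}$ with the coefficients given in \eqref{eq03-5}; similarly $h_{011}$ only sees modes $0$ and $2k_2$; $h_{110}$ only sees $|k_1 - k_2|$ and $k_1 + k_2$; $h_{020}$ only sees $0$ and $2k_2$. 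In each case the relevant linear operator is $-\mu_p D_0 - L_0(I)$ (for steady components), $2i\omega_0 I - \mu_p D_0 - L_0(e^{2i\omega_0\cdot}I)$, or $i\omega_0 I - \mu_p D_0 - L_0(e^{i\omega_0\cdot}I)$, which by \eqref{eq303} become the matrices $\int_{-r}^0 d\eta_p(\theta)$, etc.; invertibility of these matrices on the modes appearing follows from assumption $(\mathbf{H4})$ that no other eigenvalues lie on the relevant lines.

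Finally I would substitute the computed inner products and the $h_{ijk}^p$ expressions \eqref{eq03-5} back into the general formulas of Theorem \ref{g3}. The vanishing of $a_{11}$ is immediate from $\langle \beta_{k_1}^2,\beta_{k_1}\rangle = 0$; the formulas for $a_{23}$ and $b_{12}$ retain only the $\delta(k_1 - 2k_2)$ resonance piece; the cubic coefficients reduce to the stated sums. The main obstacle is purely one of bookkeeping: in the cubic formulas, the terms $\langle Q_{\phi_i h_{jk\ell}}\beta_{k_i}, \beta_{k_j}\rangle$ generate several Fourier modes of $h$ simultaneously (through $\beta_{k_1}\beta_{k_2} = \frac{1}{\sqrt{2}}(\beta_{|k_1-k_2|} + \beta_{k_1+k_2})$), and one must carefully keep track of which resonance conditions $\delta(k_1 - 2k_2)$ or $\delta(k_2 - 2k_1)$ switch on at which stage, especially in the mixed terms appearing in $a_{123}$ and $b_{112}$. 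Once this accounting is done uniformly, the final formulas \eqref{eq03-5++} follow by direct substitution, completing the proof.
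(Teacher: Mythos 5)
Your proposal is correct and follows essentially the same route as the paper: the paper's "proof" of Proposition \ref{pro:4.5} consists precisely of tabulating the triple inner products $\langle\beta_{k_i}\beta_{k_j},\beta_{k_l}\rangle$, etc.\ via the product-to-sum identity $\beta_m\beta_n=\tfrac{1}{\sqrt{2}}(\beta_{|m-n|}+\beta_{m+n})$, solving \eqref{eq16+2}--\eqref{eq16+3-} mode by mode for the excited Fourier components $h_q^p$ listed in \eqref{eq03-5}, and substituting into the general formulas \eqref{eq03-5+++}--\eqref{eq03-5+++++} of Theorem \ref{g3}. Your identification of the resonance indicators $\delta(k_1-2k_2)$, $\delta(k_2-2k_1)$ as arising from $\langle\beta_{k_2}^2,\beta_{k_1}\rangle$ and $\langle\beta_{k_1}^2,\beta_{k_2}\rangle$ matches the paper exactly.
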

	\section{Example}
	In this section we  apply our results above to the Turing-Hopf bifurcation of a diffusive Schnakenberg chemical reaction system with gene expression time delay in the following form (see \cite{Chen2013JNS,YGLM,SER}):
	
\begin{equation}\label{3.1b}
\begin{cases}
u_t(x,t)=\varepsilon d u_{xx}(x,t)+a-u(x,t)+u^2(x,t-\tau)v(x,t-\tau), &x\in(0,1),\;t>0,\\
v_t(x,t)=d v_{xx}(x,t)+b-u^2(x,t-\tau)v(x,t-\tau), &x\in(0,1),\;t>0,\\
u_x(0,t)=u_x(1,t)=v_x(0,t)=v_x(1,t)=0, \;\; &t\geq0,\\
u(x,t)=\phi(x,t)\geq0, v(x,t)=\varphi(x,t)\geq0,&(x,t)\in[0,1]\times[-\tau,0],
\end{cases}
\end{equation}
System \eqref{3.1b} has a unique positive constant steady state
solution $E_*=(u_*,v_*)$, where
\begin{equation}\label{fixp}
u_*=a+b,\;~~~~v_*=\frac{b}{(a+b)^2},
\end{equation}

Recalling that $\mu_k=k^2\pi^2$ are the eigenvalues of the $-\Delta$ in the one dimensional spatial domain $(0,1)$, $k\in\mathbb{N}_0$. Then, a straightforward analysis shows that the eigenvalues of the linearized operator are given by the roots of
\begin{equation}\label{eigen}
D_k(\lambda):=\lambda^2+p_k\lambda+r_k+(s_k\lambda+q_k)e^{-\lambda\tau}=0,\;\;k\in\mathbb{N}_0,
\end{equation}
where,
\begin{equation}\label{cf}
\begin{split}
p_k=& (\varepsilon+1)dk^2\pi^2+1,\;~~~r_k= \varepsilon d^2k^4\pi^4+dk^2\pi^2,\\
s_k=& u_*^2-2u_*v_*,\;~~~~~~~~~~~q_k= (\varepsilon u_*^2-2u_*v_*)dk^2\pi^2+u_*^2.
\end{split}
\end{equation}
 By analyzing the characteristic equations \eqref{eigen} with $a=1,\ b=2,\ d=4$ (see \cite [Theorem 2.12 and 2.15] {JWC2017} for details on general results),  we have
\begin{theorem}\label{thm:5.1}
For system \eqref{3.1b} with $a=1$,  $b=2$, $d=4$, there is a constant positive steady state $(u_*,v_*)=(3,2/9)$, and there exists  $\tau_*\approx 0.2014$, $\varepsilon_*\approx 0.0022$, $ \omega_*\approx 7.6907$ such that
\begin{enumerate}
	\item when $\tau=\tau_*$, $\varepsilon=\varepsilon_*$,  $D_0(\lambda)$ has a pair of purely imaginary roots $\pm \mathrm{i}\omega_*$, $D_1(\lambda)$ has a simple zero root, with all other roots  of $D_{k}(\lambda)$ having negative real parts $k\in\mathbb{N}_0$.
	\item the system \eqref {3.1b} undergoes $(1,0)-$mode Turing-Hopf bifurcation near the constant steady state $(u_*,v_*)$  at $\tau=\tau_*,\ \varepsilon=\varepsilon_*$.
	\item the constant steady state $(u_*,v_*)$ is locally  asymptotically stable for the system \eqref{3.1b} with $\tau\in [\,0,\tau_*)$ and $\varepsilon>\varepsilon_*$, and unstable for $0<\varepsilon<\varepsilon_*$ or $\tau > \tau_*$.
\end{enumerate}
\end{theorem}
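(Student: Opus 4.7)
The plan is to verify the three items directly by analyzing the family of transcendental characteristic equations $D_k(\lambda)=0$ in \eqref{eigen} with the specific parameter values $a=1,\ b=2,\ d=4$. First I would confirm the constant steady state: substituting into $a-u+u^2v=0$ and $b-u^2v=0$ immediately gives $u_*=a+b=3$ and $v_*=b/(a+b)^2=2/9$, which in turn fixes the coefficients $p_k,r_k,s_k,q_k$ in \eqref{cf} as explicit functions of $\varepsilon$ and $k$ alone.

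For item (1), the existence of a simple zero root of $D_1(\lambda)$ is the Turing condition $D_1(0)=r_1+q_1=0$; this is a linear equation in $\varepsilon$ once $k=1$ is inserted, and I would solve it to obtain $\varepsilon_*\approx 0.0022$, then check $D_1'(0)\neq 0$ to confirm simplicity. The Hopf condition $D_0(\mathrm{i}\omega)=0$ at the Hopf mode $k=0$ splits, after separating real and imaginary parts, into a pair
\begin{equation*}
-\omega^2+r_0+q_0\cos(\omega\tau)+s_0\omega\sin(\omega\tau)=0,\qquad p_0\omega-q_0\sin(\omega\tau)+s_0\omega\cos(\omega\tau)=0.
\end{equation*}
Eliminating the trigonometric terms yields a polynomial $\omega^4+(p_0^2-s_0^2-2r_0)\omega^2+(r_0^2-q_0^2)=0$, whose unique positive root $\omega_*\approx 7.6907$ determines $\tau_*\approx 0.2014$ through the second equation above (choosing the smallest positive $\tau$). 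Simplicity of $\pm\mathrm{i}\omega_*$ follows from $D_0'(\mathrm{i}\omega_*)\neq 0$, which I would check by direct substitution.

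Next, for item (1) I must show no other root of any $D_k(\lambda)$ lies on the closed right half-plane at $(\tau_*,\varepsilon_*)$. For $k=0$ and $k=1$ this is a check that the two chosen roots are the only imaginary/zero roots of those specific equations. For $k\geq 2$, I would use the standard route for delay equations: verify that when $\tau=0$ all $D_k(\lambda)$ are Routh–Hurwitz stable (a quick check on $p_k>0$ and $r_k+q_k>0$ for $k\geq 2$ using the explicit coefficients), then rule out crossings of the imaginary axis on $[0,\tau_*]$ by showing that the polynomial in $\omega^2$ analogous to the one above has no positive root for each $k\geq 2$. This case-by-case verification for higher modes is where the argument is most delicate and is the main obstacle; it typically reduces to a monotonicity/positivity argument in $k$ (using $r_k,q_k\to+\infty$ as $k\to\infty$), and I would cite the general analysis in \cite{JWC2017} which already carries out such an argument in a nearby parameter regime. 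Items (2) and (3) then follow: (2) is the statement that $(k_1,k_2)=(1,0)$ satisfies \textbf{(H4)} — the transversality conditions $\partial_\varepsilon\gamma(0)\neq 0$ and $\partial_\tau\nu(0)\neq 0$ can be obtained by implicit differentiation of $D_1(\gamma;\varepsilon)=0$ at $\gamma=0$ and of $D_0(\nu+\mathrm{i}\omega;\tau)=0$ at $\nu=0$, $\omega=\omega_*$, both of which reduce to non-vanishing rational expressions in the already-computed coefficients. Item (3) is a corollary of the spectral picture: on the complement of the bifurcation set the signs of the distinguished eigenvalues $\gamma(\tau,\varepsilon)$ and $\mathrm{Re}\,\nu(\tau,\varepsilon)$ are determined by the signs of the transversality coefficients, and linearized stability of $E_*$ then follows from the standard semigroup argument for PFDEs.
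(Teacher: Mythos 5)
Your proposal is correct and follows essentially the same route as the paper, which does not carry out the argument itself but simply cites the characteristic-equation analysis of \cite{JWC2017} (Theorems 2.12 and 2.15); your outline reconstructs exactly that analysis — Turing condition $r_1+q_1=0$ for $\varepsilon_*$, the $\omega^4+(p_0^2-s_0^2-2r_0)\omega^2+(r_0^2-q_0^2)=0$ elimination for $\omega_*$ and $\tau_*$, Routh--Hurwitz at $\tau=0$ plus absence of imaginary-axis crossings for $k\ge 2$, and transversality for \textbf{(H4)} — and the numerical values check out. The only point to tighten is that for $k=1$ the zero root persists for all $\tau$ at $\varepsilon=\varepsilon_*$, so the ``stable at $\tau=0$ then no crossing'' argument must be phrased for the remaining spectrum of $D_1$ separately from the $k\ne 1$ modes, which your sketch only implicitly covers.
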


Hence we have $k_1=1$ and $k_2=0$ at $\tau=\tau_*$ defined in Theorem \ref{thm:5.1}, which corresponds to \textbf{Case (3)} in Section 4.
We normalize the time delay $\tau$ in system \eqref{3.1b} by time-rescaling $t\rightarrow t/\tau$, and translate $(u_*,v_*)$ into the origin. We also introduce two bifurcation parameters
$\alpha=(\alpha_1,\alpha_2)$ by setting
\begin{equation}
\tau=\tau_*+\alpha_1,\;\; \varepsilon=\varepsilon_*+\alpha_2.
\end{equation}
Then, system \eqref{3.1b} is transformed into an abstract equation in $C([-1,0],X)$:
	\begin{equation}\label{lin}
	\frac{d}{dt}U(t)=L_0 U_t+D_0\Delta U(t)+\frac{1}{2}L_1(\alpha)U_t+\frac{1}{2}D_1(\alpha)\Delta U(t)+\frac{1}{2!}Q(U_t,U_t)+\frac{1}{3!}C(U_t,U_t,U_t)+\ldots,
	\end{equation}
	where
\begin{align*}
	D_0&=d\tau_* \left(
	\begin{array}{cc}
	\varepsilon_*&0\\
	0&1
	\end{array}\right),\;\;
	D_1(\alpha)=2d\left(
	\begin{array}{cc}
	\alpha_1\varepsilon_*+\alpha_2\tau_*&0\\
	0&\alpha_1
	\end{array}\right),\\
 L_0 X&=\tau_*\left(
	\begin{array}{c}
	-x_1(0)+2u_*v_*x_1(-1)+u_*^2x_2(-1) \\
	-(2u_*v_*x_1(-1)+u_*^2x_2(-1))
	\end{array}\right),\\
    L_1(\alpha)X&=2\alpha_1\left(
	\begin{array}{c}
	-x_1(0)+2u_*v_*x_1(-1)+u_*^2x_2(-1) \\
	-(2u_*v_*x_1(-1)+u_*^2x_2(-1))
	\end{array}\right),
\end{align*}
and
\begin{align*}
Q_{XY}&=2\tau_*[v_*x_1(-1)y_1(-1)+u_*(x_1(-1)y_2(-1)+x_2(-1)y_1(-1))]\left(
	\begin{array}{c}
	1\\
	-1
	\end{array}\right),\\
C_{XYZ}&=2\tau_*[x_1(-1)y_1(-1)z_2(-1)+x_1(-1)y_2(-1)z_1(-1)+x_2(-1)y_1(-1)z_1(-1)]\left(
	\begin{array}{c}
	1\\
	-1
	\end{array}\right),
\end{align*}
	with $
	X=\left(
	\begin{array}{c}
	x_1\\
	x_2
	\end{array}\right), ~Y=\left(
	\begin{array}{c}
	y_1\\
	y_2
	\end{array}\right),~Z=\left(
	\begin{array}{c}
	z_1\\
	z_2
	\end{array}\right).$
	
From routine calculation, we obtain the eigenfunctions (as defined in \eqref{eq5+3}):
\begin{equation}\label{eq88}
\begin{split}
\phi_1(0) &=\left(
	\begin{array}{c}
	1 \\
	-0.0274
	\end{array}\right),~~
	\phi_2(0) =\left(
	\begin{array}{c}
	1 \\
	-1 + 0.1298\mathrm{i}
	\end{array}\right),
	\\ \psi_1(0 ) &=\frac{1}{1.1734}\left(
	1 ,
	0.1849
	\right),~~
	\psi_2(0) =\frac{1}{-8.1518 - 6.9779\mathrm{i}}\left(
	1 ,
	6.7502-0.8761\mathrm{i}
	\right).
\end{split}
 \end{equation}

By \eqref{eq03-3}, we obtain that
\begin{equation}\label{5.8}
\begin{split}
h_{200}^0(0)&=\left(\begin{array}{c}-0.0062\\0.0004
\end{array}\right),\;
h_{200}^0(-1)=\left(\begin{array}{c}-0.0055\\-0.0018
\end{array}\right),\;
h_{200}^2(0)= h_{200}^2(-1)=\left(\begin{array}{c}0.4506\\-0.0038
\end{array}
\right)
\\
h_{011}^0(0)&=\left(\begin{array}{c}1.2336\\-0.0877\end{array}\right),
h_{011}^0(-1)=\left(\begin{array}{c}1.0906\\0.3504\end{array}\right),
h_{011}^2(0)=\left(\begin{array}{c}0\\0\end{array}\right),
h_{011}^2(-1)=\left(\begin{array}{c}0\\0\end{array}\right),
\end{split}
\end{equation}
\begin{equation}\label{5.9}
\begin{split}
h_{020}^0(0)&=\left(\begin{array}{c}0.0761+0.0358\mathrm{i}\\-0.0748 +0.0093\mathrm{i}\end{array}\right),\;
h_{020}^0(-1)=\left(\begin{array}{c}0.2954-0.1131\mathrm{i}\\-0.2848 + 0.1679\mathrm{i}\end{array}\right),\\
h_{110}^1(0)&=\left(\begin{array}{c}0.1171 +0.1850\mathrm{i}\\-0.0029- 0.1255\mathrm{i}\end{array}\right),\;
h_{110}^1(-1)=\left(\begin{array}{c}-0.3783- 0.5733\mathrm{i}\\-0.1100 + 0.0128\mathrm{i}\end{array}\right),\\
h_{101}^1&=\overline{h_{110}^1}, \;\;\; h_{002}^0=\overline{h_{020}^0}.
\end{split}
\end{equation}

Substituting the above calculated values into the expression \eqref {eq03-3++}, we obtain the coefficients of normal form \eqref{third} as follows
\begin{equation}\label{z123}
\begin{aligned}
&a_1(\alpha)=-0.0009\alpha_1-6.7762\alpha_2,\;\;
b_2(\alpha)=(3.5818 + 2.2515 \mathrm{i})\alpha_1,\\
&a_{11}=a_{23}=b_{12}=0,\\
&a_{111}= -9.4377\times 10^{-4},~~~~~~~b_{112}=0.0403 + 0.1213 \mathrm{i}, \\
&a_{123}=-0.4782,~~~~~~~~~~~~~~~~~b_{223}= -0.2553 -0.7712\mathrm{i}.
\end{aligned}
\end{equation}
Thus, in the corresponding  planar system \eqref{eq473-2}, we have that
\begin{equation}
\begin{split}
 \varepsilon_1(\alpha)&=3.5818 \alpha_1, \;\; \varepsilon_2(\alpha)=-0.0009\alpha_1-6.7762\alpha_2,\\
 b_0&=-42.7011,\; c_0=1.8735,\; d_0=1, \; \mathrm{sign}(\mathrm{Re}(b_{223}))=-1.
 \end{split}
\end{equation}
Therefore the Case $\mathrm{III}$ in Table \ref{tab1} occurs, and we find that the  bifurcation critical lines in Figure \ref{fig1} are, respectively,
\begin{equation*}
\begin{split}
L_1:&\tau=\tau_*,~\varepsilon>\varepsilon_*,\;\;
L_2:\varepsilon=\varepsilon_*-0.00013(\tau-\tau_*),~\tau>\tau_*,\\
L_3:&\varepsilon=\varepsilon_*-0.9916(\tau-\tau_*),~\tau>\tau_*,\;\;
L_4:\tau=\tau_*,~\varepsilon<\varepsilon_*,\\
L_5:&\varepsilon=\varepsilon_*+ 0.0111(\tau-\tau_*),~\tau<\tau_*,\;\;
L_6:\varepsilon=\varepsilon_*-0.00013(\tau-\tau_*),~\tau<\tau_*.
\end{split}
\end{equation*}

 Taking notice of $\mathrm{sign}(\mathrm{Re}(b_{223}))=-1$ in the coordinate transformation \eqref{tran-2}, and from phase portraits  in Figure \ref{fig1} and the analysis in \cite[Section 4]{AJ}, we have the following result.
\begin{theorem}\label{thm:5.2}
Let $a=1$, $b=2$ and $d=4$. At the constant positive steady state $(u_*,v_*)=(3,2/9)$, near the $(1,0)$-mode Turing-Hopf bifurcation point $(\tau_*,\varepsilon_*)\approx (0.2014,0.0022)$, with frequency $\omega_*=7.6907$, the system \eqref{3.1b} has the following dynamical behavior when the parameter pair $(\tau,\varepsilon)$ is sufficiently close to  $(\tau_*,\varepsilon_*)$: (see Figure \ref{fig1})
\begin{enumerate}
\item[(1)] When $\varepsilon>\varepsilon_*-0.0013(\tau-\tau_*)$
and $\tau<\tau_*$ (that is $(\tau, \varepsilon)\in D_1$), the constant steady state $(u_*,v_*)$ is locally asymptotically stable;
and a $0-$mode Hopf bifurcation occurs at $(u_*,v_*)$  when $(\tau,\ \varepsilon)$ crosses $L_1$ transversally.

\item[(2)] When
$\varepsilon>\varepsilon_*-0.0013(\tau-\tau_*)$
and $\tau>\tau_*$ (that is $(\tau, \varepsilon)\in D_2$), the constant steady state $(u_*,v_*)$ is unstable and there exists a locally asymptotically stable spatially homogeneous periodic orbit which  bifurcates from  $(u_*,v_*)$; and
a $1-$mode Turing bifurcation occurs at $(u_*,v_*)$  when $(\tau,\ \varepsilon)$ crosses $L_2$ transversally.

\item[(3)] When
$\varepsilon_*-0.0013(\tau-\tau_*)>\varepsilon>\varepsilon_*-0.9916(\tau-\tau_*)$
and $\tau>\tau_*$ (that is $(\tau, \varepsilon)\in D_3$), the constant steady state $(u_*,v_*)$ is unstable, there are two unstable spatially non-homogeneous steady states  which bifurcate from $(u_*,v_*)$,  and the spatially homogeneous periodic orbit is  locally asymptotically stable; and a $1-$mode Turing bifurcation occurs at the spatially homogeneous periodic orbit when $(\tau,\ \varepsilon)$ crosses $L_3$ transversally.

 \item[(4)]  When
$\varepsilon<\varepsilon_*-0.9916(\tau-\tau_*)$
and $\tau>\tau_*$ (that is $(\tau, \varepsilon)\in D_4$), the constant steady state $(u_*,v_*)$ and the two spatially non-homogeneous steady state solutions are all unstable, the spatially homogeneous periodic orbit is also unstable, and there are two locally asymptotically stable spatially non-homogeneous periodic orbits  which  bifurcate from the spatially homogeneous periodic orbit, whose linear main parts are approximately
\begin{equation}\label{5.12}
E_*+\rho\phi_{2}(0)e^{\mathrm{i}\tau_*\omega_* t}+\bar{\rho}\bar{\phi}_{2}(0)e^{\mathrm{-i}\tau_*\omega_* t}\pm h\phi_{1}(0)\cos(\pi x),
\end{equation}
where $\rho$ and $h$ are some constants; and a $0-$mode Hopf bifurcation occurs at $(u_*,v_*)$ when $(\tau,\ \varepsilon)$ crosses $L_4$ transversally.

 \item[(5)]  When
$\varepsilon<\varepsilon_*+0.0111(\tau-\tau_*)$
and $\tau<\tau_*$ (that is $(\tau, \varepsilon)\in D_5$),  the constant steady state $(u_*,v_*)$ and the two spatially non-homogeneous steady state solutions are all unstable, there is no spatially homogeneous periodic orbit (disappearing through the Hopf bifurcation on $L_4$), and two  spatially non-homogeneous periodic orbits are locally asymptotically stable; and a $0-$mode Hopf bifurcation occurs at each of two spatially non-homogeneous steady state solutions when $(\tau,\ \varepsilon)$ crosses $L_5$ transversally.

 \item[(6)] When
$\varepsilon_*-0.0013(\tau-\tau_*)>\varepsilon>\varepsilon_*+0.0111(\tau-\tau_*)$
and $\tau<\tau_*$ (that is $(\tau, \varepsilon)\in D_6$), the constant steady state $(u_*,v_*)$ is unstable,  the two  spatially non-homogeneous steady state solutions are locally asymptotically stable, and there is no  spatially non-homogeneous periodic orbits (disappearing through the Hopf bifurcations on $L_5$); and a $1-$mode Turing bifurcation occurs at $(u_*,v_*)$ when $(\tau,\ \varepsilon)$ crosses $L_6$ transversally.
 \end{enumerate}
 \end{theorem}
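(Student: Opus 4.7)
The plan is to apply Theorem~\ref{g3} to reduce (\ref{3.1b}) at $(\tau_*,\varepsilon_*)$ to the three-dimensional normal form (\ref{third}) on the center manifold, then exploit the Hopf--pitchfork classification of Section~3.2 together with the phase portraits in Figure~\ref{fig1} to read off the dynamics in each of the six sectors. Theorem~\ref{thm:5.1} already verifies hypotheses \textbf{(H1)}--\textbf{(H4)} with $k_1=1$, $k_2=0$ and critical frequency $\tau_*\omega_*$, so the abstract reduction applies without extra work.

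First I would time-rescale $t\mapsto t/\tau$ and translate $(u_*,v_*)$ to the origin to cast (\ref{3.1b}) into the form (\ref{eq301}) on $C([-1,0],X)$, producing the explicit operators $L_0,L_1(\alpha),D_0,D_1(\alpha)$ and symmetric multilinear forms $Q,C$ displayed just before (\ref{eq88}). Because $k_1=1\neq 0=k_2$, Case~(3) of Section~4 is the relevant one, so Proposition~\ref{pro:4.3} supplies the required coefficients; in particular $a_{11}=a_{23}=b_{12}=0$ follow automatically from the orthogonality identities at the head of that case. The principal routine burden is computing the center-manifold data $h_{200}^0,h_{200}^2,h_{011}^0,h_{020}^0,h_{110}^1$ from (\ref{eq03-3}), each requiring a $2\times 2$ linear solve built from $\eta_0$, $\eta_1$ or $\eta_2$; this produces the values in (\ref{5.8})--(\ref{5.9}) and then, by substitution into (\ref{eq03-3++}), the cubic coefficients (\ref{z123}).

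With these in hand I would verify the Hopf--pitchfork non-degeneracy conditions ($a_{111}\neq 0$, $a_{123}\neq 0$, $\mathrm{Re}(b_{112})\neq 0$, $\mathrm{Re}(b_{223})\neq 0$, and $a_{111}\mathrm{Re}(b_{223})-a_{123}\mathrm{Re}(b_{112})\neq 0$), apply the rescaling (\ref{tran-2}) to obtain the planar system (\ref{eq473-2}) with $b_0\approx -42.70$, $c_0\approx 1.87$, $d_0=1$ and $d_0-b_0c_0>0$, thereby identifying Case~$\mathrm{III}$ of Table~\ref{tab1}. The six critical lines $L_1,\ldots,L_6$ in the $(\tau,\varepsilon)$-plane are then obtained by pulling back the standard bifurcation loci of Figure~\ref{fig1}(a) through the affine change furnished by $\varepsilon_1(\alpha)$ and $\varepsilon_2(\alpha)$ in (\ref{eq473-2}), evaluated at the numerical values of $a_1(\alpha)$ and $b_2(\alpha)$ from (\ref{z123}).

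The main subtlety---and what I expect to be the chief obstacle---is translating equilibria and periodic orbits of (\ref{eq473-2}) correctly back to solutions of (\ref{3.1b}), because $\mathrm{sign}(\mathrm{Re}(b_{223}))=-1$ reverses time in (\ref{tran-2}) and therefore flips every stability label in Figure~\ref{fig1}(b). Using the dictionary of \cite[Section~4]{AJ}, $E_1$ corresponds to the constant steady state $(u_*,v_*)$, $E_2$ to the spatially homogeneous periodic orbit bifurcating from the $0$-mode Hopf, $E_3^{\pm}$ to the two spatially non-homogeneous steady states $\propto \pm\cos(\pi x)$ from the $1$-mode Turing branch, and $E_4^{\pm}$ to the two spatially non-homogeneous periodic orbits whose leading-order profile is (\ref{5.12}). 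Reading the (time-reversed) phase portrait in each of the six sectors $D_1,\ldots,D_6$ then yields assertions (1)--(6) directly, while the transverse-crossing Hopf and Turing statements across $L_1,\ldots,L_6$ follow from simplicity and transversality of the relevant eigenvalue crossings already guaranteed by \textbf{(H4)} and the explicit values $\partial\gamma/\partial\alpha_2(0),\,\partial\nu/\partial\alpha_1(0)$ embedded in $a_1(\alpha),\,b_2(\alpha)$.
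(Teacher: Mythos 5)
Your proposal is correct and follows essentially the same route as the paper: reduction of \eqref{3.1b} to the normal form via Theorem \ref{g3} and Proposition \ref{pro:4.3} (Case (3), $k_1=1$, $k_2=0$), computation of the $h_q^k$ and the cubic coefficients \eqref{z123}, identification of Case $\mathrm{III}$ of Table \ref{tab1} for the planar amplitude system \eqref{eq473-2}, and translation back to \eqref{3.1b} via \cite[Section 4]{AJ} while accounting for the time reversal caused by $\mathrm{sign}(\mathrm{Re}(b_{223}))=-1$ in \eqref{tran-2}. Your identification of $E_1,E_2,E_3^{\pm},E_4^{\pm}$ with the constant steady state, the homogeneous periodic orbit, the nonhomogeneous steady states, and the nonhomogeneous periodic orbits is exactly the dictionary the paper uses.
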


We summarize the numbers of spatialtemporal patterned solutions (steady states or periodic orbits) in each parameter region $D_i$ ($1\le i\le 6$) in Table \ref{tab4}. The Morse index of a steady state solution is defined to be the number of positive eigenvalues of associated linearized equation, and the Morse index of a periodic orbit is defined to be the number of Floquet multipliers which are greater than $1$. A steady state or a periodic orbit is locally asymptotically stable if its Morse index is $0$. Hence the stable pattern for $D_1$ is the constant steady state $(u_*,v_*)$; the stable pattern for $D_2$ and $D_3$ is the spatially homogeneous periodic orbit; a pair of spatially non-homogeneous periodic orbits are the stable patterns for $D_4$ and $D_5$; and a pair of spatially non-homogeneous steady state solutions are the stable patterns for $D_6$.

\begin{table}
\begin{center}
\begin{tabular}{|c|cccccc|}
\hline
&$D_1$ &$D_2$ & $D_3$&  $D_4$ & $D_5$ & $D_6$ \\
\hline
homogeneous steady state  & 1(0) & 1(2) & 1(3) & 1(3)& 1(1) & 1(1) \\
non-homogeneous steady state & 0    &  0   & 2(2) & 2(2) &2(2) & 2(0) \\
homogeneous periodic orbit & 0 & 1(0) & 1(0) & 1(1) & 0 &0 \\
non-homogeneous periodic orbit & 0 & 0 & 0 & 2(0) & 2(0) & 0\\
\hline
\end{tabular}
\end{center}
\caption{The number of patterned solutions of  \eqref{3.1b} in each parameter regions $D_i$ ($1\le i\le 6$). Here $j(k)$ means the number of specific patterned solutions is $j$, and the Morse index of each such patterned solution is $k$.}\label{tab4}
\end{table}

\section{Proof of Theorem \ref{g3}} \label{proof}

In this section we give the proof of the main result Theorem \ref{g3}.
From \eqref{eq6+-} and \eqref{eq6+}, we denote that, for $v\in\mathcal{C}$,
\begin{equation}
\begin{split}
F_2(v,\alpha)&=L_1(\alpha)v+D_1(\alpha)\Delta v(0)+Q(v,v),\\
F_3(v,0)&=C(v,v,v).
\end{split}
\end{equation}
By doing Taylor   expansion for the nonlinear terms in
\eqref{eq7} at $(z,y,\alpha)=(0,0,0)$, we have
\begin{equation}\label{eq8}
\begin{split}
\dot{z}  &= Bz +\dfrac{1}{2!}f^1_2(z, y,\alpha)+\dfrac{1}{3!}f^1_3(z, y,\alpha)+\cdots,  \\
\frac{\mathrm{d}}{\mathrm{d}t}  y&= A_1y+\dfrac{1}{2!}f^2_2(z,
y,\alpha)+\dfrac{1}{3!}f^2_3(z, y,\alpha)+\cdots,
\end{split}
\end{equation}
where $B=\text{diag}(0,i\omega_0,-i\omega_0)$,
$f^i_j(z,y,\alpha)\ (i=1,2)$ are the homogeneous polynomials of
degree $j$ in variables $(z,y,\alpha),~z=(z_1,z_2,\bar{z}_2)\in\mathbb{C}^3,~y\in\mathcal{Q}^1,~\alpha\in V$. For the purposes of this article, we are interested in the three terms:
\begin{equation}\label{f23}
f^{1}_2(z, y,\alpha)
\triangleq
\left(\begin{array}{cc}f^{11}_2(z, y,\alpha)\\f^{12}_2(z, y,\alpha)\\ \overline{f^{12}_2}(z, y,\alpha)
\end{array} \right),\ \
f^1_3(z, 0,0)\triangleq
\left(\begin{array}{cc}f^{11}_3(z, 0,0)\\f^{12}_3(z, 0,0)\\ \overline{f^{12}_3}(z, 0,0)
\end{array} \right),
\end{equation}
\begin{equation}\label{eq6+3}
f^2_2(z,0,0)=(X_0-\Phi\Psi(0))F_2(\phi_1z_1\beta_{k_1}+(\phi_2z_2+\bar{\phi}_2\bar{z}_2)\beta_{k_2}, 0).
\end{equation}
where
\begin{equation}\label{eq6+1}
f^{1i}_2(z, y,\alpha)=\psi_i (0)\langle F_2(\phi_1z_1\beta_{k_1}+(\phi_2z_2+\bar{\phi}_2\bar{z}_2)\beta_{k_2}+y, \alpha ),\beta_{k_i}\rangle,\;\; i=1,2,
\end{equation}
\begin{equation}\label{eq6+2}
f^{1i}_3(z, 0,0)=\psi_i (0)\langle F_3(\phi_1z_1\beta_{k_1}+(\phi_2z_2+\bar{\phi}_2\bar{z}_2)\beta_{k_2}, 0 ),\beta_{k_i}\rangle,\;\; i=1,2.
\end{equation}
Noticing that $L_1(\alpha),D_1(\alpha)$ are linear,  $Q,C$ are symmetric multilinear, and together with $\Delta\beta_{k_i}=-\mu_{k_i}\beta_{k_i}$, for $i=1,2$,
we obtain that
\begin{equation}\label{eq6+1-1}
\begin{array}{rlc}
f^{11}_2(z, y,\alpha)=&\psi_1 (0)[L_1(\alpha)\phi_1z_1-\mu_{k_1}D_1(\alpha)\phi_1(0)z_1+Q_{\phi_1\phi_1}z_1^2\langle\beta_{k_1}^2,\beta_{k_1}\rangle+2(Q_{\phi_1\phi_2}z_1z_2+\\

&Q_{\phi_1\bar{\phi}_2}z_1\bar{z}_2)\langle\beta_{k_1}\beta_{k_2},\beta_{k_1}\rangle+(Q_{\phi_2\phi_2}z_2^2+2Q_{\phi_2\bar{\phi}_2}z_2\bar{z}_2+Q_{\bar{\phi}_2\bar{\phi}_2}\bar{z}_2^2 )\langle\beta_{k_2}^2,\beta_{k_1}\rangle+\\
&\langle L_1(\alpha)y,\beta_{k_1}\rangle+\langle 2Q(\phi_1z_1\beta_{k_1}+(\phi_2z_2+\bar{\phi}_2\bar{z}_2)\beta_{k_2},y)+Q(y,y),\beta_{k_1}\rangle\\
&+\langle D_1(\alpha)\Delta y(0),\beta_{k_1}\rangle ],
\end{array}
\end{equation}
\begin{equation}\label{eq6+1-1-1}
\begin{array}{rlc}
f^{12}_2(z, y,\alpha)=&\psi_2 (0)[L_1(\alpha)\phi_2z_2+L_1(\alpha)\bar{\phi}_2\bar{z}_2-\mu_{k_2}D_1(\alpha)(\phi_2(0)z_2+\bar{\phi}_2(0)\bar{z}_2)+\\

& Q_{\phi_1\phi_1}z_1^2\langle\beta_{k_1}^2,\beta_{k_2}\rangle+2(Q_{\phi_1\phi_2}z_1z_2+Q_{\phi_1\bar{\phi}_2}z_1\bar{z}_2)\langle\beta_{k_1}\beta_{k_2},\beta_{k_2}\rangle+(Q_{\phi_2\phi_2}z_2^2+\\

&2Q_{\phi_2\bar{\phi}_2}z_2\bar{z}_2+Q_{\bar{\phi}_2\bar{\phi}_2}\bar{z}_2^2 )\langle\beta_{k_2}^2,\beta_{k_2}\rangle+\langle L_1(\alpha)y,\beta_{k_2}\rangle+\langle 2Q(\phi_1z_1\beta_{k_1}+\\

&(\phi_2z_2+\bar{\phi}_2\bar{z}_2)\beta_{k_2},y)+Q(y,y),\beta_{k_2}\rangle+\langle D_1(\alpha)\Delta y(0),\beta_{k_2}\rangle ],
\end{array}
\end{equation}
\begin{equation}\label{eq6+2-1}
\begin{array}{rlc}
f^{1i}_3(z, 0,0)=&\psi_i (0)[C_{\phi_1\phi_1\phi_1}z_1^3\langle\beta_{k_1}^3,\beta_{k_i}\rangle+(C_{\phi_2\phi_2\phi_2}z_2^3+C_{\bar{\phi}_2\bar{\phi}_2\bar{\phi}_2}\bar{z}_2^3+3C_{\phi_2\phi_2\bar{\phi}_2}z_2^2\bar{z}_2+\\

&3C_{\phi_2\bar{\phi}_2\bar{\phi}_2}z_2\bar{z}_2^2)\langle\beta_{k_2}^3,\beta_{k_i}\rangle+3(C_{\phi_1\phi_1\phi_2}z_1^2z_2+C_{\phi_1\phi_1\bar{\phi}_2}z_1^2\bar{z}_2)\langle\beta_{k_1}^2\beta_{k_2},\beta_{k_i}\rangle+\\

&+3(C_{\phi_1\phi_2\phi_2}z_1z_2^2+C_{\phi_1\bar{\phi}_2\bar{\phi}_2}z_1\bar{z}_2^2+2C_{\phi_1\phi_2\bar{\phi}_2}z_1z_2\bar{z}_2)\langle\beta_{k_1}\beta_{k_2}^2,\beta_{k_i}\rangle],\ i=1,2,

\end{array}
\end{equation}
\begin{equation}\label{eq6+3-1}
\begin{array}{rlc}
f^{2}_2(z, 0,0)(\theta)=&\delta(\theta)[Q_{\phi_1\phi_1}z_1^2\beta_{k_1}^2+2(Q_{\phi_1\phi_2}z_1z_2+Q_{\phi_1\bar{\phi}_2}z_1\bar{z}_2)\beta_{k_1}\beta_{k_2}+\\

&+(Q_{\phi_2\phi_2}z_2^2+2Q_{\phi_2\bar{\phi}_2}z_2\bar{z}_2+
Q_{\bar{\phi}_2\bar{\phi}_2}\bar{z}_2^2 )\beta_{k_2}^2]-\{\phi_1(\theta)\psi_1 (0)[Q_{\phi_1\phi_1}z_1^2\langle\beta_{k_1}^2,\beta_{k_1}\rangle+\\

&+2(Q_{\phi_1\phi_2}z_1z_2+Q_{\phi_1\bar{\phi}_2}z_1\bar{z}_2)\langle\beta_{k_1}\beta_{k_2},\beta_{k_1}\rangle+(Q_{\phi_2\phi_2}z_2^2+2Q_{\phi_2\bar{\phi}_2}z_2\bar{z}_2+\\

&+Q_{\bar{\phi}_2\bar{\phi}_2}\bar{z}_2^2 )\langle\beta_{k_2}^2,\beta_{k_1}\rangle]\beta_{k_1}+(\phi_2(\theta)\psi_2 (0)+\bar{\phi}_2(\theta)\bar{\psi}_2 (0))[Q_{\phi_1\phi_1}z_1^2\langle\beta_{k_1}^2,\beta_{k_2}\rangle+

\\&+2(Q_{\phi_1\phi_2}z_1z_2+Q_{\phi_1\bar{\phi}_2}z_1\bar{z}_2)\langle\beta_{k_1}\beta_{k_2},\beta_{k_2}\rangle+(Q_{\phi_2\phi_2}z_2^2+2Q_{\phi_2\bar{\phi}_2}z_2\bar{z}_2+\\

&+Q_{\bar{\phi}_2\bar{\phi}_2}\bar{z}_2^2 )\langle\beta_{k_2}^2,\beta_{k_2}\rangle]\beta_{k_2}
\},~~~~~\text{for}~\theta\in[-r,0],
\end{array}
\end{equation}
where $\delta(\theta)=0$, for $\theta \in [-r,0)$,  $\delta(0)=1$.

Now we first obtain the normal form of \eqref{eq301} up to the quadratic terms.
\begin{lemma} \label{g2}
Assume that {\bf (H1)}-{\bf (H4)} are satisfied. Ignore the effect of the higher order terms $(\geq 2)$ of the perturbation parameter,  then the normal form   of \eqref{eq301} up to the quadratic terms on the center manifold at $\alpha=0$  has the form 
\begin{equation}\label{eq440}
	\dot{z}=Bz+\frac{1}{2}g_2^1(z,0,\alpha)+h.o.t..
	\end{equation}
	Here 
    \begin{equation}\label{eq1}
	\begin{array}{rlc}
	g^{1}_2(z,0,\alpha)=&2a_{1}(\alpha)z_1+\psi_1 (0)[Q_{\phi_1\phi_1}z_1^2\langle\beta_{k_1}^2,\beta_{k_1}\rangle+2Q_{\phi_2\bar{\phi}_2}z_2\bar{z}_2\langle\beta_{k_2}^2,\beta_{k_1}\rangle]e_1\\
	
	&+2b_{2}(\alpha)z_2+\psi_2 (0)[2Q_{\phi_1\phi_2}z_1z_2\langle\beta_{k_1}\beta_{k_2},\beta_{k_2}\rangle]e_2\\
	
	&+\overline{b_{2}(\alpha)}z_2+\bar{\psi}_2 (0)[2Q_{\phi_1\bar{\phi}_2}z_1\bar{z}_2\langle\beta_{k_1}\beta_{k_2},\beta_{k_2}\rangle]e_3,
	
	\end{array}
	\end{equation}
	with 	
	\begin{equation}\label{a1b2}
	\begin{array}{rlc}
	a_{1}(\alpha)=&\frac{1}{2}\psi_1 (0)(L_1(\alpha)\phi_1-\mu_{k_1}D_1(\alpha)\phi_1(0)),\\
	b_{2}(\alpha)=&\frac{1}{2}\psi_2 (0)(L_1(\alpha)\phi_2-\mu_{k_2}D_1(\alpha)\phi_2(0)),
	\end{array}
	\end{equation}
	and $h.o.t.$ stands for higher order terms.
\end{lemma}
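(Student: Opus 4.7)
The plan is to apply Faria's normal form algorithm \cite{Faria2000,Faria2002} directly to the decomposed system \eqref{eq7}. At second order in $(z,\alpha)$ one seeks a near-identity change of variables $z\mapsto z+\tfrac12 U_2^1(z,\alpha)$, $y\mapsto y+\tfrac12 U_2^2(z,\alpha)$ that transforms the $z$-equation into $\dot z=Bz+\tfrac12 g_2^1(z,y,\alpha)+\mathrm{h.o.t.}$, where $g_2^1(z,0,\alpha)$ is required to lie in a prescribed complement of the range of the homological operator $M_2^1$. For the normal form restricted to the local center manifold, $y=y(z,\alpha)$ is of order $\ge 2$ in $(z,\alpha)$, so every $y$-dependent piece in \eqref{eq6+1-1}--\eqref{eq6+1-1-1} contributes only at order $\ge 3$ and can be discarded in the computation of $g_2^1(z,0,\alpha)$.

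Because $B=\mathrm{diag}(0,i\omega_0,-i\omega_0)$, the homological operator acts diagonally on the monomial basis of the space of $\mathbb{C}^3$-valued homogeneous polynomials of degree two in $(z,\alpha)$,
\begin{equation*}
M_2^1\bigl(z^q\alpha^\ell e_k\bigr)=\bigl(q\cdot\lambda-\lambda_k\bigr)\,z^q\alpha^\ell e_k,\qquad \lambda=(0,i\omega_0,-i\omega_0),
\end{equation*}
so a canonical complement of $\mathrm{Im}(M_2^1)$ is spanned by the resonant monomials $z^q\alpha^\ell e_k$ with $|q|+|\ell|=2$ and $q\cdot\lambda=\lambda_k$. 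Since $F(0,\alpha)\equiv 0$, the function $f_2^1(0,0,\alpha)$ vanishes identically and no pure $\alpha^2$ monomial occurs; the assumption that contributions of order $\ge 2$ in $\alpha$ are neglected then plays no role at this stage. Enumerating resonances gives $\alpha_1 z_1$, $\alpha_2 z_1$, $z_1^2$, $z_2\bar z_2$ in the $e_1$-slot; $\alpha_1 z_2$, $\alpha_2 z_2$, $z_1 z_2$ in the $e_2$-slot; and their complex conjugates in $e_3$.

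It then remains to read off the coefficient of each resonant monomial from \eqref{eq6+1-1}--\eqref{eq6+1-1-1} with $y=0$. The $\alpha$-linear piece of $f_2^{1i}$ is exactly $\psi_i(0)\bigl[L_1(\alpha)\phi_i-\mu_{k_i}D_1(\alpha)\phi_i(0)\bigr]$ times $z_i$ (for $i=1$) or $z_2,\bar z_2$ (for $i=2$), producing $a_1(\alpha)$ and $b_2(\alpha)$ in \eqref{a1b2} after dividing by $2$; the $Q$-contributions pick out the pairings $\langle\beta_{k_1}^2,\beta_{k_1}\rangle$, $\langle\beta_{k_2}^2,\beta_{k_1}\rangle$ and $\langle\beta_{k_1}\beta_{k_2},\beta_{k_2}\rangle$ shown in \eqref{eq1}, while the non-resonant pairings (such as the coefficients of $z_1 z_2$ in $e_1$, of $z_1^2$ in $e_2$, or of $z_2^2$ and $\bar z_2^2$ in $e_2$) are eliminated by the choice of $U_2^1$ and do not enter $g_2^1$. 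The third component of \eqref{eq1} follows from the second by complex conjugation, since $f_2^{13}=\overline{f_2^{12}}$. The main piece of bookkeeping, rather than a genuine obstacle, is to match each surviving multilinear expression $\psi_i(0)Q_{\phi_j\phi_k}$ with the correct spatial inner product $\langle\beta_{k_a}\beta_{k_b},\beta_{k_i}\rangle$, which is forced by the projection $\pi$ in \eqref{eq5+} together with $\Delta\beta_{k}=-\mu_k\beta_k$; this done, \eqref{eq1} follows directly.
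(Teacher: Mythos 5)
Your proposal is correct and follows essentially the same route as the paper: both define the homological operator $M_2^1$ on the space of degree-two homogeneous polynomials in $(z_1,z_2,\bar z_2,\alpha_1,\alpha_2)$, take the complement of its image spanned by the resonant monomials $z_1^2e_1$, $z_2\bar z_2e_1$, $z_1\alpha_ie_1$, $z_1z_2e_2$, $z_2\alpha_ie_2$ and their conjugates, and obtain $g_2^1(z,0,\alpha)$ by projecting $f_2^1(z,0,\alpha)$ onto that complement. The only difference is one of detail: you justify the choice of complement by the explicit diagonal action $M_2^1(z^q\alpha^\ell e_k)=(q\cdot\lambda-\lambda_k)z^q\alpha^\ell e_k$ and note why the $y$-dependent and pure-$\alpha^2$ terms drop out, whereas the paper simply asserts the spanning set and cites Faria's projection formula.
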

\begin{proof}
Let $M_2^1$ denote the operator
\begin{equation}\label{pppppp}
M_2^1: V_2^5(\mathbb{C}^3)\to
V_2^5(\mathbb{C}^3),~~\mathrm{and}~~(M_2^1p)(z,\alpha )=D_zp(z,\alpha
)Bz- Bp(z,\alpha ),
\end{equation}
where $V_2^5(\mathbb{C}^3)$ is the linear space of the second
order homogeneous polynomials in five variables
$(z_1,z_2,\bar{z}_2,\alpha_1,\alpha_2)$ with coefficients in
$\mathbb{C}^3$,$z=(z_1,z_2,\bar{z}_2),~\alpha=(\alpha_1,\alpha_2)$  and $B=diag(0,\mathrm{i}\omega_0, -\mathrm{i}\omega_0)$. One may choose the decomposition
$$V_2^5(\mathbb{C}^3)=\textrm{Im}(M_2^1)\oplus \textrm{Im}(M_2^1)^c$$
with complementary space $\textrm{Im}(M_2^1)^c$ spanned by the elements
\begin{equation}\label{eq6+6}
z_1^2e_1,~z_2\bar{z}_2e_1,~z_1\alpha_i e_1,~z_1z_2e_2,~z_2\alpha_i e_2,~z_1\bar{z}_2e_3,~\bar{z}_2\alpha_i e_3,\;\;i=1,2,
\end{equation}
where $e_1,e_2,e_3$ denote the natural basis of $\mathbb{R}^3$.
By the projection mapping which was presented in \cite{Faria2000},
\begin{equation}\label{eq4401}
g_2^1(z,0,\alpha)=Proj_{(\textrm{Im}(M_2^1))^c}f_2^1(z,0,\alpha),
\end{equation}
we get \eqref{eq1}, \eqref{a1b2} and that completes the proof.
\end{proof}

Let $V_2^{3}(\mathbb{C}^3\times \mathrm{Ker}\pi)$ be the space of
homogeneous polynomials of degree $2$ in the variables
$z=(z_1,z_2,z\bar{}_2)$ with coefficients in $\mathbb{C}^3\times
\mathrm{Ker} \pi.$
Let the operator $M_2^1$ defined in \eqref{pppppp} be restricted in $V_2^3(\mathbb{C}^3)$, as
\begin{equation}\label{eq9a}
M_2^1: V_2^3(\mathbb{C}^3)\longmapsto
V_2^3(\mathbb{C}^3),~~\mathrm{and}~~ (M_2^1p)(z)=D_zp(z
)Bz- Bp(z),
\end{equation}
and define the operator $M_2^2$ by
\begin{equation}\label{eq9}
M_2^2:~V_2^{3}(\mathcal{Q}^1)\subset V_2^3(\mathrm{Ker}\pi)~\longmapsto~V_2^{3}(\mathrm{Ker}\pi),~~\mathrm{and}~~
(M_2^2h)(z)=D_z h(z) B z - A_{1}(h(z)),
\end{equation}
then we have the following decompositions:
\begin{equation} \label{eq10}
\begin{split}
V_2^{3}(\mathbb{C}^{3})&= \textrm{Im}(M_2^1)\oplus \textrm{Im}(M_2^1)^c, \;\;
V_2^{3}(\mathbb{C}^{3})= \mathrm{Ker}(M_2^1)\oplus \mathrm{Ker}(M_2^1)^c, \\
V_2^{3}(\mathrm{Ker} \pi)&=\mathrm{Im}(M_2^2)\oplus\mathrm{Im}(M_2^2)^c, \;\;
V_2^{3}(\mathcal{Q}^1) =\mathrm{Ker}(M_2^2)\oplus \mathrm{Ker}(M_2^2)^c.
\end{split}
\end{equation}
The projection associated with the preceding decomposition of
$V^{3}_2(\mathbb{C}^3)\times V^{3}_2(\mathrm{Ker}\pi)$ over
$\mathrm{Im}(M^1_2)\times \mathrm{Im}(M^2_2)$ is denoted by
$P_{I,2}=(P_{I,2}^1,P_{I,2}^2)$.

Following \cite{Faria2000}, we  set
\begin{equation}\label{eq14}
U_2(z)=\left(
	\begin{array}{c}
	U_2^1\\
	U_2^2
	\end{array}\right)
=M_2^{-1}P_{I,2}f_2(z,0,0),
\end{equation}
and by a
transformation of variables
\begin{equation}\label{eq17}
(z,y)=(\hat{z},\hat{y})+\frac{1}{2!}U_2(\hat{z}),
\end{equation}
the first  equation of \eqref{eq8} becomes, after dropping the hats,
\begin{equation}\label{eq18}
\dot{z}=Bz+\frac{1}{2!}g^1_2(z,0,\mu)+\frac{1}{3!}\overline{f}^1_{3}(z,0,0)+\cdots,
\end{equation}
where
\begin{equation}\label{eq19}
\overline{f}^1_3(z,0,0)=f^1_3(z,0,0)+\frac{3}{2}[(Df^1_2(z,y,0))_{y=0}U_2(z)-DU^1_2(z)g^1_2(z,0,0)].
\end{equation}

To complete the proof of Theorem \ref{g3}, we only need to calculate the third order term $g_3^1(z,0,0)$ in the normal form \eqref{eq441}. It is divided into three steps.
\\
\noindent{\bf Step 1. Computation of $U_2^1$.}
\begin{lemma} Assume that {\bf (H1)}-{\bf (H4)} are satisfied. Then the  formula of  $U_2^1$ in \eqref{eq14} is
\begin{equation}\label{eq1+1}
	\begin{split}
	&i\omega_0U^{1}_2(z)\\
	=&\psi_1 (0)[2(Q_{\phi_1\phi_2}z_1z_2- Q_{\phi_1\bar{\phi}_2}z_1\bar{z}_2)\langle\beta_{k_1}\beta_{k_2},\beta_{k_1}\rangle
	+\frac{1}{2}(Q_{\phi_2\phi_2}z_2^2-Q_{\bar{\phi}_2\bar{\phi}_2}\bar{z}_2^2 )\langle\beta_{k_2}^2,\beta_{k_1}\rangle]e_1\\
	&+\psi_2 (0)[-Q_{\phi_1\phi_1}z_1^2\langle\beta_{k_1}^2,\beta_{k_2}\rangle-Q_{\phi_1\bar{\phi}_2}z_1\bar{z}_2\langle\beta_{k_1}\beta_{k_2},\beta_{k_2}\rangle
    +(Q_{\phi_2\phi_2}z_2^2-2Q_{\phi_2\bar{\phi}_2}z_2\bar{z}_2
	-\frac{1}{3}Q_{\bar{\phi}_2\bar{\phi}_2}\bar{z}_2^2 )\langle\beta_{k_2}^2,\beta_{k_2}\rangle]e_2\\
	&-\bar{\psi}_2 (0)[-Q_{\phi_1\phi_1}z_1^2\langle\beta_{k_1}^2,\beta_{k_2}\rangle-Q_{\phi_1\phi_2}z_1z_2\langle\beta_{k_1}\beta_{k_2},\beta_{k_2}\rangle
    +(Q_{\bar{\phi}_2\bar{\phi}_2}\bar{z}_2^2-2Q_{\phi_2\bar{\phi}_2}z_2\bar{z}_2
	-\frac{1}{3}Q_{\phi_2\phi_2}z_2^2 )\langle\beta_{k_2}^2,\beta_{k_2}\rangle]e_3.
	\end{split}
	\end{equation}	
\end{lemma}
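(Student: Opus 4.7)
The plan is to compute $U_2^1$ directly from the definition $U_2 = M_2^{-1} P_{I,2} f_2(z,0,0)$, working component by component. The key observation is that the operator $M_2^1$ in \eqref{eq9a} acts diagonally on the monomial basis: for any multi-index $q=(q_1,q_2,q_3)$ with $|q|=2$ and any $j\in\{1,2,3\}$,
\begin{equation*}
M_2^1\!\left(z^q e_j\right)=\bigl[(q_2-q_3)\mathrm{i}\omega_0-\lambda_j\bigr]\,z^q e_j,\qquad (\lambda_1,\lambda_2,\lambda_3)=(0,\mathrm{i}\omega_0,-\mathrm{i}\omega_0).
\end{equation*}
Consequently, the kernel (equivalently, $\operatorname{Im}(M_2^1)^c$) is spanned by exactly the monomials listed in \eqref{eq6+6}, and on every other monomial $M_2^1$ is invertible with an explicit eigenvalue-type inverse.

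First, I would write down the $e_1$, $e_2$, $e_3$ components of $f_2(z,0,0)$ from \eqref{eq6+1-1} and \eqref{eq6+1-1-1} by setting $\alpha=0$ and $y=0$ (the $e_3$ component is $\overline{f_2^{12}}$, and since $Q$ is a real symmetric multilinear form, conjugation simply interchanges $\phi_2\leftrightarrow\bar\phi_2$ and $z_2\leftrightarrow\bar z_2$ while turning $\psi_2(0)$ into $\bar\psi_2(0)$). Next, to form $P_{I,2}^1 f_2^1(z,0,0)$ I would discard the monomials lying in the complement listed in \eqref{eq6+6}, namely the $z_1^2 e_1$ and $z_2\bar z_2 e_1$ terms in the first component, the $z_1 z_2 e_2$ term in the second, and the $z_1\bar z_2 e_3$ term in the third. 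The surviving monomials are precisely those for which the diagonal factor $(q_2-q_3)\mathrm{i}\omega_0-\lambda_j$ is nonzero.

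The inversion step then reduces to dividing each remaining coefficient by its eigenvalue. For example, on the $e_1$ line the coefficients of $z_1 z_2$, $z_1\bar z_2$, $z_2^2$, $\bar z_2^2$ get divided by $\mathrm{i}\omega_0$, $-\mathrm{i}\omega_0$, $2\mathrm{i}\omega_0$, $-2\mathrm{i}\omega_0$ respectively; on the $e_2$ line the coefficients of $z_1^2$, $z_1\bar z_2$, $z_2^2$, $z_2\bar z_2$, $\bar z_2^2$ are divided by $-\mathrm{i}\omega_0$, $-2\mathrm{i}\omega_0$, $\mathrm{i}\omega_0$, $-\mathrm{i}\omega_0$, $-3\mathrm{i}\omega_0$; the $e_3$ line is the conjugate of the $e_2$ line. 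Multiplying the whole resulting expression by $\mathrm{i}\omega_0$ to clear denominators yields exactly the three lines of \eqref{eq1+1}.

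The only mildly delicate point, which is really the main obstacle, is bookkeeping: making sure the signs from the eigenvalue differences line up with the factors $-1$, $\tfrac12$, $-\tfrac13$, $-2$ in the stated formula, and verifying the $e_3$ block is the correct conjugate of the $e_2$ block (rather than literally replacing everything by conjugates). A convenient sanity check is to confirm that $-\overline{\mathrm{i}\omega_0 U_2^1\!\mid_{e_2}}=\mathrm{i}\omega_0 U_2^1\!\mid_{e_3}$ as claimed by \eqref{eq1+1}, which follows automatically from $\overline{M_2^1(z^q e_2)}=M_2^1(\bar z^q e_3)$ together with the reality of $Q$. No further manipulation is needed, and there is no contribution from $D_1(\alpha)$ or $L_1(\alpha)$ since we are evaluating at $\alpha=0$.
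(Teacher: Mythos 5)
Your proposal is correct and follows essentially the same route as the paper: the paper's proof likewise lists the monomial basis of $\mathrm{Ker}(M_2^1)^c$, records the images (eigenvalues) under $M_2^1$, and then inverts term by term on the projected part of $f_2^1(z,0,0)$; your general formula $M_2^1(z^qe_j)=[(q_2-q_3)\mathrm{i}\omega_0-\lambda_j]z^qe_j$ reproduces exactly the fourteen eigenvalues the paper tabulates, and your divisors and sign bookkeeping check out against \eqref{eq1+1}. The only cosmetic difference is that you state the diagonal action abstractly rather than enumerating the basis images, which is a matter of presentation, not substance.
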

\begin{proof}
	Since $U^1_2 \in \mathrm{Ker}(M_2^1)^c$, and $\mathrm{Ker}(M_2^1)^c$
	is spanned by
	\begin{equation}\label{eq6+5}
    \begin{split}
	&z_2^2e_1,~\bar{z}_2^2e_1,~z_1z_2e_1,~z_1\bar{z}_2e_1,~
	z_1^2e_2,~z_2^2e_2,~\bar{z}_2^2e_2,\\
    &z_1\bar{z}_2e_2,~z_2\bar{z}_2e_2,~
	z_1^2e_3,~z_2^2e_3,~\bar{z}_2^2e_3,~z_1z_2e_3,~z_2\bar{z}_2e_3,
    \end{split}
	\end{equation}
	The above elements are mapped by $M_2^1$ to, respectively,
	$$
	\begin{array}{rrrrrrrr}
	 2i\omega_0z_2^2e_1,&-2i\omega_0\bar{z}_2^2e_1,&i\omega_0z_1z_2e_1,&-i\omega_0z_1\bar{z}_2e_1,&
	-i\omega_0z_1^2e_2,&i\omega_0z_2^2e_2,&-3i\omega_0\bar{z}_2^2e_2,&\\
	-2i\omega_0z_1\bar{z}_2e_2,&
	-i\omega_0z_2\bar{z}_2e_2,&
	 i\omega_0z_1^2e_3,&3i\omega_0z_2^2e_3,&-i\omega_0\bar{z}_2^2e_3,&2i\omega_0z_1z_2e_3,&i\omega_0z_2\bar{z}_2e_3.&
	\end{array}
	$$
	Then, by \eqref{eq14} and \eqref{eq6+1-1}, the expression \eqref{eq1+1} of $U_2^1$ is obtained and  the proof is completed.
\end{proof}

\noindent{\bf Step 2. Computation of $U^2_2$.}

We know that
\begin{equation}\label{eq15+1}
\frac{1}{2!}U_2^2\triangleq h(z)=(h^{(1)}(z),~h^{(2)}(z),\cdots,~h^{(m)}(z))^
{\mathrm{T}}\in V_2^3(\mathcal{Q}^1)\end{equation}
is the unique solution
of the equation
\begin{equation}\label{eq15}
(M_2^2h)(z)=\frac{1}{2!} f_2^2(z,0,0).
\end{equation}
Thus, by \eqref{eq9} and the definition of $A_1$, we have
\begin{equation}\label{eq16}
D_zh(z)Bz-\dot{h}(z)+X_0[\dot{h}(z)(0)-L_0h(z)-D_0\Delta h(z)(0)] =
\frac{1}{2!} f_2^2(z,0,0).
\end{equation}
where $\dot{h}$ denotes the derivative of $h(z)(\theta)$ respective to
$\theta$.
Expressing $h(z)$ in the general monomial form, we have
\begin{equation}\label{eq15+2}
\begin{split}
h(z)(\theta)=&h_{200}(\theta)z_1^2\!+h_{020}(\theta)z_2^2
+h_{002}(\theta)\bar{z}_2^2+h_{110}(\theta)z_1z_2\\
&+h_{101}(\theta)z_1\bar{z}_2+h_{011}(\theta)z_2\bar{z}_2,\;\;
\theta \in [-r,0].
\end{split}
\end{equation}
Hence \eqref{eq16} is equivalent to, for $\theta\in [-r,0]$,
\begin{equation}\label{eq16+}\begin{split}
&-\dot{h}_{200}(\theta)z_1^2-{\dot h}_{011}(\theta)z_2\bar{z}_2+[2i\omega_0h_{020}(\theta)-\dot{h}_{020}(\theta)]z_2^2+[-2i\omega_0h_{002}(\theta)-{\dot h}_{002}(\theta)]\bar{z}_2^2\\
&+[i\omega_0h_{110}(\theta)-{\dot h}_{110}(\theta)]z_1z_2+[-i\omega_0h_{101}(\theta)-{\dot h}_{101}(\theta)]z_1\bar{z}_2
= \frac{1}{2!} f_2^2(z,0,0)(\theta).
\end{split}
\end{equation}
For $\theta \in [-r,0)$,  comparing the coefficients of $z^q$ for $|q|=2$, $q\in \mathbb{N}_0^3$ in \eqref{eq6+3-1} and \eqref{eq16+}, we obtain that
\begin{equation}\label{eq16+1}
\begin{array}{rl}
\dot{h}_{200}(\theta) =&\frac{1}{2}[\phi_1(\theta)\psi_1 (0)Q_{\phi_1\phi_1}\langle\beta_{k_1}^2,\beta_{k_1}\rangle\beta_{k_1}+\\&
+(\phi_2(\theta)\psi_2 (0)+\bar{\phi}_2(\theta)\bar{\psi}_2 (0))Q_{\phi_1\phi_1}\langle\beta_{k_1}^2,\beta_{k_2}\rangle\beta_{k_2}],\\

\dot{h}_{011}(\theta) =&\phi_1(\theta)\psi_1 (0)Q_{\phi_2\bar{\phi}_2}\langle\beta_{k_2}^2,\beta_{k_1}\rangle\beta_{k_1}+\\&
+(\phi_2(\theta)\psi_2 (0)+\bar{\phi}_2(\theta)\bar{\psi}_2 (0))Q_{\phi_2\bar{\phi}_2}\langle\beta_{k_2}^2,\beta_{k_2}\rangle\beta_{k_2},\\

-2\mathrm{i}
\omega_0h_{020}(\theta)+\dot{h}_{020}(\theta) =&\frac{1}{2}[\phi_1(\theta)\psi_1 (0)Q_{\phi_2\phi_2}\langle\beta_{k_2}^2,\beta_{k_1}\rangle\beta_{k_1}+\\&
+(\phi_2(\theta)\psi_2 (0)+\bar{\phi}_2(\theta)\bar{\psi}_2 (0))Q_{\phi_2\phi_2}\langle\beta_{k_2}^2,\beta_{k_2}\rangle\beta_{k_2}],\\

-\mathrm{i}
\omega_0h_{110}(\theta)+\dot{h}_{110}(\theta) =&\phi_1(\theta)\psi_1 (0)Q_{\phi_1\phi_2}\langle\beta_{k_1}\beta_{k_2},\beta_{k_1}\rangle\beta_{k_1}+\\&
+(\phi_2(\theta)\psi_2 (0)+\bar{\phi}_2(\theta)\bar{\psi}_2 (0))Q_{\phi_1\phi_2}\langle\beta_{k_1}\beta_{k_2},\beta_{k_2}\rangle\beta_{k_2}.
\end{array}
\end{equation}
Solving these equations, we obtain that
\begin{equation}\label{eq16+2}
\begin{array}{rl}
h_{200}(\theta) =&h_{200}(0)+
\frac{1}{2}\{\theta\phi_1(0)\psi_1 (0)Q_{\phi_1\phi_1}\langle\beta_{k_1}^2,\beta_{k_1}\rangle\beta_{k_1}+\\&
+\frac{1}{i\omega_0}[(e^{i\omega_0\theta}-1)\phi_2(0)\psi_2 (0)-(e^{-i\omega_0\theta}-1)\bar{\phi}_2(0)\bar{\psi}_2 (0)]Q_{\phi_1\phi_1}\langle\beta_{k_1}^2,\beta_{k_2}\rangle\beta_{k_2}\},\\

h_{011}(\theta) =&h_{011}(0)+\theta\phi_1(0)\psi_1 (0)Q_{\phi_2\bar{\phi}_2}\langle\beta_{k_2}^2,\beta_{k_1}\rangle\beta_{k_1}+\\&
+\frac{1}{i\omega_0}[(e^{i\omega_0\theta}-1)\phi_2(0)\psi_2 (0)-(e^{-i\omega_0\theta}-1)\bar{\phi}_2(0)\bar{\psi}_2 (0)]Q_{\phi_2\bar{\phi}_2}\langle\beta_{k_2}^2,\beta_{k_2}\rangle\beta_{k_2},\\

h_{020}(\theta) =&h_{020}(0)e^{2i\omega_0\theta}+\frac{1}{2i\omega_0}\{ \frac{1}{2}(e^{2i\omega_0\theta}-1)\phi_1(0)\psi_1 (0)Q_{\phi_2\phi_2}\langle\beta_{k_2}^2,\beta_{k_1}\rangle\beta_{k_1}+\\&
+[(e^{2i\omega_0\theta}-e^{i\omega_0\theta})\phi_2(0)\psi_2 (0)+\frac{1}{3}(e^{2i\omega_0\theta}-e^{-i\omega_0\theta})\bar{\phi}_2(0)\bar{\psi}_2 (0)]Q_{\phi_2\phi_2}\langle\beta_{k_2}^2,\beta_{k_2}\rangle\beta_{k_2}\},\\

h_{110}(\theta) =&h_{110}(0)e^{i\omega_0\theta}+\frac{1}{i\omega_0} (e^{i\omega_0\theta}-1)\phi_1(0)\psi_1 (0)Q_{\phi_1\phi_2}\langle\beta_{k_1}\beta_{k_2},\beta_{k_1}\rangle\beta_{k_1}+\\&
+[\theta e^{i\omega_0\theta}\phi_2(0)\psi_2 (0)+\frac{1}{2i\omega_0}(e^{i\omega_0\theta}-e^{-i\omega_0\theta})\bar{\phi}_2(0)\bar{\psi}_2 (0)]Q_{\phi_1\phi_2}\langle\beta_{k_1}\beta_{k_2},\beta_{k_2}\rangle\beta_{k_2},\\
h_{002}(\theta) =&h_{020}(\theta),~~~~h_{101}(\theta) =h_{110}(\theta),~~~\theta \in [-r,0].
\end{array}
\end{equation}
And at $\theta=0$, by \eqref{eq16+2} we have
\begin{equation}\label{eq16+7}\begin{array}{rl}
&-(L_0(h_{200})+D_0\Delta h_{200}(0))z_1^2+[2i\omega_0h_{020}(0)-(L_0(h_{020})+D_0\Delta h_{020}(0))]z_2^2+\\&
[-2i\omega_0h_{002}(0)-(L_0(h_{002})+D_0\Delta h_{002}(0))]\bar{z}_2^2
+[i\omega_0h_{110}(0)-(L_0(h_{110})+D_0\Delta h_{110}(0))]z_1z_2\\&+[-i\omega_0h_{101}(0)-(L_0(h_{101})+D_0\Delta h_{101}(0))]z_1\bar{z}_2-(L_0(h_{011})+D_0\Delta h_{011}(0))z_2\bar{z}_2\\
=& \frac{1}{2!} f_2^2(z,0,0)(0),
\end{array}
\end{equation}
Again expanding the above sum and comparing the coefficients, we obtain that
\begin{equation}\label{eq16+3-}
\begin{array}{rl}
L_0(h_{200})+D_0\Delta h_{200}(0)=&\frac{1}{2}[-Q_{\phi_1\phi_1}\beta_{k_1}^2+\phi_1(0)\psi_1 (0)Q_{\phi_1\phi_1}\langle\beta_{k_1}^2,\beta_{k_1}\rangle\beta_{k_1}+\\&
(\phi_2(0)\psi_2 (0)+\bar{\phi}_2(0)\bar{\psi}_2 (0))Q_{\phi_1\phi_1}\langle\beta_{k_1}^2,\beta_{k_2}\rangle\beta_{k_2}],\\

L_0(h_{011})+D_0\Delta h_{011}(0) =&-Q_{\phi_2\bar{\phi}_2}\beta_{k_2}^2+\phi_1(0)\psi_1 (0)Q_{\phi_2\bar{\phi}_2}\langle\beta_{k_2}^2,\beta_{k_1}\rangle\beta_{k_1}+\\&
(\phi_2(0)\psi_2 (0)+\bar{\phi}_2(0)\bar{\psi}_2 (0))Q_{\phi_2\bar{\phi}_2}\langle\beta_{k_2}^2,\beta_{k_2}\rangle\beta_{k_2},\\

-2\mathrm{i}
\omega_0h_{020}(0)+L_0(h_{020})+D_0\Delta h_{020}(0) =&\frac{1}{2}[-Q_{\phi_2\phi_2}\beta_{k_2}^2+\phi_1(0)\psi_1 (0)Q_{\phi_2\phi_2}\langle\beta_{k_2}^2,\beta_{k_1}\rangle\beta_{k_1}+\\&
(\phi_2(0)\psi_2 (0)+\bar{\phi}_2(0)\bar{\psi}_2 (0))Q_{\phi_2\phi_2}\langle\beta_{k_2}^2,\beta_{k_2}\rangle\beta_{k_2}],\\

-\mathrm{i}
\omega_0h_{110}(0)+L_0(h_{110})+D_0\Delta h_{110}(0) =&-Q_{\phi_1\phi_2}\beta_{k_1}\beta_{k_2}+\phi_1(0)\psi_1 (0)Q_{\phi_1\phi_2}\langle\beta_{k_1}\beta_{k_2},\beta_{k_1}\rangle\beta_{k_1}+\\&
(\phi_2(0)\psi_2 (0)+\bar{\phi}_2(0)\bar{\psi}_2 (0))Q_{\phi_1\phi_2}\langle\beta_{k_1}\beta_{k_2},\beta_{k_2}\rangle\beta_{k_2}.
\end{array}
\end{equation}
Therefore $U_2^2$ are determined by \eqref{eq16+2} and \eqref{eq16+3-}.
For later computation of the third order normal form, note that for any $q\in \N_0^3$, $|q|=2$, we have
\begin{equation}\label{eq16+4}
h_q(\theta)=\langle h_q(\theta),\beta_{k_1}\rangle\beta_{k_1}+\langle h_q(\theta),\beta_{k_2}\rangle\beta_{k_2}+\sum_{k\geq 0,k\neq k_1,k_2}\langle h_q(\theta),\beta_{k}\rangle\beta_{k}.
\end{equation}
Then  $\langle h_q(0),\beta_{k}\rangle$ ($k\in \mathbb{N}_0$) can be obtained from \eqref{eq16+3-}, and $\langle h_q(\theta),\beta_{k}\rangle$ for $\theta\in [-r,0]$ is determined by \eqref{eq16+2}.
In fact, we do not need to find $\langle h_q(0),\beta_{k}\rangle$ for all $q\in\mathbb{N}_0^3$, $|q|=2$ and $k\in \mathbb{N}_0$, but only need to find the ones  appearing in $g_3^1(z,0,0)$.

\noindent{\bf Step 3. Computation of $g_3^1$.}

Now we have all the components for computing the third order normal form. Let $M_3$ be the operator defined in $V_3^3(\mathbb{C}^3\times Ker
\pi),$ with
$$
M_3^1: V_3^3(\mathbb{C}^3)\to
V_3^3(\mathbb{C}^3)~~\mathrm{and}~~ (M_3^1p)(z )=D_zp(z )Bz- Bp(z ),
$$
where $V_3^3(\mathbb{C}^3)$ denotes the linear space of  homogeneous polynomials of degree $3$ in the variables $z=(z_1,z_2,\bar{z}_2)$ with coefficients in $\mathbb{C}^3$. Then  one may choose the
decomposition
$$V_3^3(\mathbb{C}^3)=\text{Im}(M_3^1)\oplus \text{Im}(M_3^1)^c$$
with the complementary space $(\text{Im}(M_3^1))^c$ spanned by the elements
\begin{equation}\label{eq6+4}
z_1^3e_1,~z_1z_2\bar{z}_2e_1,~z_1^2z_2e_2,~z_2^2\bar{z}_2e_2,~z_1^2\bar{z}_2e_3,~z_2\bar{z}_2^2 e_3,
\end{equation}
where $e_1,e_2,e_3$ denote the natural basis of $\mathbb{R}^3$.
Now we have the normal form up to the third order
\begin{equation}\label{eq44}
\dot{z}=Bz+\dfrac{1}{2!}g^{1}_2(z,0,\alpha)+\dfrac{1}{3!}g^{1}_3(z,0,0)+h.o.t.,
\end{equation}
where \ba \label{eq444}
\dfrac{1}{3!}g^1_3(z,0,0)=\dfrac{1}{3!}Proj_{(\textrm{Im}(M_3^1))^c}\overline{f}^1_3(z,0,0).
\ea
From \eqref{eq19} denoting
\begin{equation}\label{eq44+}
\begin{array}{rlc}
g_{31}(z)=&\dfrac{1}{6} Proj_{(\textrm{Im}(M_3^1))^c}f^1_3(z,0,0),\\
g_{32}(z)=&-\dfrac{1}{4} Proj_{(\textrm{Im}(M_3^1))^c}D_zU^1_2(z)g^1_2(z,0,0),\\
g_{33}(z)=&\dfrac{1}{4} Proj_{(\textrm{Im}(M_3^1))^c}(D_zf^1_2(z,y,0))_{y=0}U_2^1(z),\\
g_{34}(z)=&\dfrac{1}{4} Proj_{(\textrm{Im}(M_3^1))^c}(D_yf^1_2(z,y,0))_{y=0}U_2^2(z),
\end{array}
\end{equation}
then \ba \label{eq44+1}
\dfrac{1}{3!}g^1_3(z,0,0)=g_{31}(z)+g_{32}(z)+g_{33}(z)+g_{34}(z).
\ea
From \eqref{eq6+2-1} and \eqref{eq44+}, we  obtain
	\begin{equation}\label{g31}
	\begin{array}{rlc}
	g_{31}(z)=&~\frac{1}{6}\psi_1 (0)[C_{\phi_1\phi_1\phi_1}z_1^3\langle\beta_{k_1}^3,\beta_{k_1}\rangle +6 C_{\phi_1\phi_2\bar{\phi}_2}z_1z_2\bar{z}_2\langle\beta_{k_1}\beta_{k_2}^2,\beta_{k_1}\rangle ]e_1\\	
	
	&+\frac{1}{2}\psi_2 (0)[C_{\phi_2\phi_2\bar{\phi}_2}z_2^2\bar{z}_2\langle\beta_{k_2}^3 ,\beta_{k_2}\rangle+C_{\phi_1\phi_1\phi_2}z_1^2z_2\langle\beta_{k_1}^2\beta_{k_2},\beta_{k_2}\rangle ]e_2\\
	
	&+\frac{1}{2}\bar{\psi}_2 (0)[C_{\phi_2\bar{\phi}_2\bar{\phi}_2}z_2\bar{z}_2^2\langle\beta_{k_2}^3 ,\beta_{k_2}\rangle+C_{\phi_1\phi_1\bar{\phi}_2}z_1^2\bar{z}_2\langle\beta_{k_1}^2\beta_{k_2},\beta_{k_2}\rangle ]e_3.
\end{array}
\end{equation}

For $g_{32}$, since $U^1_2(z)\in \text{Ker}(M_2^1)^c$ and $g^1_2(z,0,0)\in \text{Im}(M_2^1)^c$, by \eqref{eq6+5} and \eqref{eq6+6} we set
	$$
	\begin{array}{rlc}
	U^{1}_2(z)=&
	(a_{020}^{(1)}z_2^2+a_{002}^{(1)}\bar{z}_2^2 +a_{110}^{(1)}z_1z_2 +a_{101}^{(1)}z_1\bar{z}_2)e_1+\\
	
	&+(a_{200}^{(2)}z_1^2+a_{020}^{(2)}z_2^2+a_{002}^{(2)}\bar{z}_2^2 +a_{101}^{(2)}z_1\bar{z}_2 +a_{011}^{(2)}z_2\bar{z}_2)e_2+\\
	&+(a_{200}^{(3)}z_1^2+a_{020}^{(3)}z_2^2+a_{002}^{(3)}\bar{z}_2^2 +a_{110}^{(3)}z_1z_2 +a_{011}^{(3)}z_2\bar{z}_2 )e_3,
	\end{array}
	$$
	$$
	 g^{1}_2(z,0,0)=(b_{200}^{(1)}z_1^2+b_{011}^{(1)}z_2\bar{z}_2)e_1+b_{110}^{(2)}z_1z_2e_2+b_{101}^{(3)}z_1\bar{z}_2e_3,
	$$
	then
	$$\begin{array}{rlc}
	D_zU^1_2(z)g^1_2(z,0,0)=&[(a_{110}^{(1)}z_2 +a_{101}^{(1)}\bar{z}_2)(b_{200}^{(1)}z_1^2+b_{011}^{(1)}z_2\bar{z}_2)+(2a_{020}^{(1)}z_2+a_{110}^{(1)}z_1)b_{110}^{(2)}z_1z_2\\&
	 +(2a_{002}^{(1)}\bar{z}_2+a_{101}^{(1)}z_1)b_{101}^{(3)}z_1\bar{z}_2]e_1+[(2a_{200}^{(2)}z_1+a_{101}^{(2)}\bar{z}_2)(b_{200}^{(1)}z_1^2+b_{011}^{(1)}z_2\bar{z}_2)\\&
	+(2a_{020}^{(2)}z_2+a_{011}^{(2)}\bar{z}_2)b_{110}^{(2)}z_1z_2
	 +(2a_{020}^{(2)}\bar{z}_2+a_{101}^{(2)}z_1+a_{011}^{(2)}z_2)b_{101}^{(3)}z_1\bar{z}_2]e_2\\&
	 +[(2a_{200}^{(3)}z_1+a_{110}^{(3)}z_2)(b_{200}^{(1)}z_1^2+b_{011}^{(1)}z_2\bar{z}_2)
	+(2a_{020}^{(3)}z_2+a_{110}^{(3)}z_1\\&
	 +a_{011}^{(3)}\bar{z}_2)b_{110}^{(2)}z_1z_2+(2a_{020}^{(3)}\bar{z}_2+a_{011}^{(3)}z_2)b_{101}^{(3)}z_1\bar{z}_2]e_3\\
	\in & \textrm{Im}(M_3^1).
	\end{array}$$
	This implies that
\begin{equation}\label{g32}
	g_{32}(z)=-\dfrac{1}{4} Proj_{(I_m(M_3^1))^c}DU^1_2(z)g^1_2(z,0,0)=0.
\end{equation}
By using \eqref{eq44+}, \eqref{eq6+1}, \eqref{eq1+1} and \eqref{eq6+4}, we obtain
	\begin{equation}\label{g33}
	 g_{33}(z)=g_{33}^{(1)}(z)e_1+g_{33}^{(2)}(z)e_2+\overline{g_{33}^{(2)}(z)}e_3,\end{equation}
	with
	\begin{equation}\label{eq6+4-1}
	\begin{array}{rlc}
	g_{33}^{(1)}(z)=&\frac{1}{2i\omega_0}\psi_1 (0)[-Q_{\phi_1\phi_2}\psi_2 (0)+Q_{\phi_1\bar{\phi}_2}\bar{\psi}_2 (0)]Q_{\phi_1\phi_1}\langle\beta_{k_1}\beta_{k_2},\beta_{k_1}\rangle\langle\beta_{k_1}^2,\beta_{k_2}\rangle z_1^3+\\&
	
	+\frac{1}{i\omega_0}\psi_1 (0)\{[-Q_{\phi_1\phi_2}\psi_2 (0)+Q_{\phi_1\bar{\phi}_2}\bar{\psi}_2 (0)]Q_{\phi_2\bar{\phi}_2}\langle\beta_{k_1}\beta_{k_2},\beta_{k_1}\rangle\langle\beta_{k_2}^2,\beta_{k_2}\rangle +\\&
	
	+\frac{1}{2}[-Q_{\phi_2\phi_2}\psi_2 (0)Q_{\phi_1\bar{\phi}_2}+Q_{\bar{\phi}_2\bar{\phi}_2}\bar{\psi}_2 (0)Q_{\phi_1\phi_2}]\langle\beta_{k_1}\beta_{k_2},\beta_{k_2}\rangle\langle\beta_{k_2}^2,\beta_{k_1}\rangle]\}z_1z_2\bar{z}_2,
	\end{array}
	\end{equation}
	\begin{equation}\label{eq6+4-2}
	\begin{array}{rlc}
	g_{33}^{(2)}(z)=&\frac{1}{2i\omega_0}\psi_2 (0)\{[2Q_{\phi_1\phi_1}\psi_1 (0)\langle\beta_{k_1}\beta_{k_2},\beta_{k_1}\rangle\langle\beta_{k_1}^2,\beta_{k_2}\rangle +Q_{\phi_1\bar{\phi}_2}\bar{\psi}_2 (0)\langle\beta_{k_1}\beta_{k_2},\beta_{k_2}\rangle^2]Q_{\phi_1\phi_2} +\\&
	
	[-Q_{\phi_2\phi_2}\psi_2 (0)+Q_{\phi_2\bar{\phi}_2}\bar{\psi}_2 (0)]Q_{\phi_1\phi_1}\langle\beta_{k_1}^2,\beta_{k_2}\rangle\langle\beta_{k_2}^2,\beta_{k_2}\rangle \}z_1^2z_2+\\&
	
	\frac{1}{4i\omega_0}\psi_2 (0)\{Q_{\phi_1\bar{\phi}_2}\psi_1 (0)Q_{\phi_2\phi_2}\langle\beta_{k_1}\beta_{k_2},\beta_{k_2}\rangle\langle\beta_{k_2}^2,\beta_{k_1}\rangle +
	\frac{2}{3}Q_{\bar{\phi}_2\bar{\phi}_2}\bar{\psi}_2 (0)Q_{\phi_2\phi_2}\langle\beta_{k_2}^2,\beta_{k_2}\rangle^2+\\&
	
	+[-2Q_{\phi_2\phi_2}\psi_2 (0)+4Q_{\phi_2\bar{\phi}_2}\bar{\psi}_2 (0)]Q_{\phi_2\bar{\phi}_2}\langle\beta_{k_2}^2,\beta_{k_2}\rangle^2\}z_2^2\bar{z}_2.
	\end{array}
	\end{equation}
Finally from \eqref{eq15+2} and the symmetric multilinearity of $Q$, we obtain
\begin{equation}\label{eq15+3}
		Q_{\phi h}Q_{\phi h_{200}}z_1^2+Q_{\phi h_{020}}z_2^2
		+Q_{\phi h_{002}}\bar{z}_2^2
		+Q_{\phi h_{110}}z_1z_2+Q_{\phi h_{101}}z_1\bar{z}_2
		+Q_{\phi h_{011}}z_2\bar{z}_2,
\end{equation}
with $\phi\in\{\phi_1,\phi_2,\bar{\phi}_2\}$. From  \eqref{eq44+},  \eqref{eq6+1-1}, \eqref{eq15+1} and \eqref{eq6+4}, we obtain
	\begin{equation}\label{g34}
	\begin{array}{rlc}
	g_{34}(z)=&\frac{1}{2}\psi_1 (0)[\langle Q_{\phi_1 h_{200}}\beta_{k_1},\beta_{k_1}\rangle z_1^3+(\langle Q_{\phi_1 h_{011}}\beta_{k_1},\beta_{k_1}\rangle +
	\langle Q_{\phi_2 h_{101}}\beta_{k_2},\beta_{k_1}\rangle+\\&
	\langle Q_{\bar{\phi}_2 h_{110}}\beta_{k_2},\beta_{k_1}\rangle)z_1z_2\bar{z}_2]e_1+\frac{1}{2}\psi_2 (0)[(\langle Q_{\phi_1 h_{110}}\beta_{k_1},\beta_{k_2}\rangle+
	\langle Q_{\phi_2 h_{200}}\beta_{k_2},\beta_{k_2}\rangle)z_1^2z_2+\\&(\langle Q_{\phi_2 h_{011}}\beta_{k_2},\beta_{k_2}\rangle+\langle Q_{\bar{\phi}_2 h_{020}}\beta_{k_2},\beta_{k_2}\rangle)z_2^2\bar{z}_2]e_2+\frac{1}{2}\bar{\psi}_2 (0)[(\langle Q_{\phi_1 h_{101}}\beta_{k_1},\beta_{k_2}\rangle+\\&
	\langle Q_{\bar{\phi}_2 h_{200}}\beta_{k_2},\beta_{k_2}\rangle)z_1^2\bar{z}_2+(\langle Q_{\bar{\phi}_2 h_{011}}\beta_{k_2},\beta_{k_2}\rangle+\langle Q_{\phi_2 h_{002}}\beta_{k_2},\beta_{k_2}\rangle)z_2\bar{z}_2^2]e_3.
	\end{array}
	\end{equation}

Now we can complete the proof of Theorem \ref{g3}.
\begin{proof}[Proof of Theorem \ref{g3}]
The conclusion of Theorem \ref{g3} follows from Lemma \ref{g2}, \eqref{g31}, \eqref{g32}, \eqref{g33} and \eqref{g34}.
\end{proof}

\section{Conclusion}

In this paper the normal forms up to the third order for a Hopf-steady state bifurcation of a general system of partial functional differential equations (PFDEs) is derived based on the center manifold and normal form theory of PFDEs. This is a codimension-two degenerate bifurcation with the characteristic equation having a pair of simple purely imaginary roots and a simple zero root, and the corresponding eigenfunctions may be spatially inhomogeneous. The PFDEs are reduced to a three-dimensional system of ordinary differential equations and precise dynamics near bifurcation point can be revealed by two unfolding parameters which can be expressed by those original perturbation parameters. Usually, the third order normal form is sufficient for analyzing bifurcation phenomena in most of the applications.

The normal forms for the Hopf-steady state bifurcation in a general PFDE has been investigated within the framework of Faria \cite{Faria2000,Faria2002}. In \cite{Faria2000} an important conclusion is that the normal forms up to  a certain finite order for both the PFDEs and its associated FDEs are the same under the assumption ($H5$).
And when ($H5$) is not satisfied, the associated FDEs may not provide complete information, and  further general results on the normal forms are not given in \cite{Faria2000}. In fact, the assumption ($H5$) is not satisfied when a Hopf-steady state bifurcation occurs. Our results on computing the normal forms on center manifolds, that is \eqref{third}-\eqref{eq03-5+++++}, \eqref{eq16+2} and \eqref{eq16+3-}, do not require ($H5$), which makes the approach applicable to a wider class of systems.

For more concrete expressions, we provide explicit formulas of the coefficients in the third order  normal forms in the Hopf-steady state bifurcation for delayed reaction-diffusion equations with Neumann boundary condition, that is \eqref{eq03-1}-\eqref{eq03-5}, and this includes the important case of  Turing-Hopf bifurcation (with $k_1\neq 0$). The formulas are user-friendly as they are expressed directly by the Fr\`echet derivatives of the functions up to third orders and the characteristic functions of the original systems, and they are shown in concise  matrix form which is convenient for computer implementation. These results can also be applied to reaction diffusion equations without delay, for example, see \cite{CJ2018}, and delayed functional differential equations without diffusion, see \cite{JiangW2010}.

Our general results are applied to the diffusive Schnakenberg system of biochemical reactions with gene expression time delay to demonstrate  how  our formulas can be applied in practical examples. In particular we provide specific conditions on the parameters for the existence and stability of spatially nonhomogeneous steady state solutions and time-periodic solutions near the Turing-Hopf bifurcation point. Our specific examples are for one-dimensional spatial domain with Neumann boundary conditions, but our  general framework is broad enough for high-dimensional spatial domains and Dirichlet boundary conditions. More specific computations for these cases will be done in the future.



\bibliographystyle{plain}
\bibliography{mybibfile}

\begin{thebibliography}{10}

\bibitem{AJ}
Q.~An and W.H. Jiang.
\newblock Spatiotemporal attractors generated by the {T}uring-{H}opf
  bifurcation in a time-delayed reaction-diffusion systems.
\newblock {\em preprint}, 2017.

\bibitem{BJ1989}
P.W. Bates and C.K.R.T. Jones.
\newblock Invariant manifolds for semilinear partial differential equations.
\newblock In {\em Dynamics reported, {V}ol.\ 2}, volume~2 of {\em Dynam.
  Report. Ser. Dynam. Systems Appl.}, pages 1--38. Wiley, Chichester, 1989.

\bibitem{BLZ1998}
P.W. Bates, K.N. Lu, and C.C. Zeng.
\newblock Existence and persistence of invariant manifolds for semiflows in
  {B}anach space.
\newblock {\em Mem. Amer. Math. Soc.}, 135(645):viii+129, 1998.

\bibitem{Baurmann2007}
M.~Baurmann, T.~Gross, and U.~Feudel.
\newblock Instabilities in spatially extended predator-prey systems:
  spatio-temporal patterns in the neighborhood of {T}uring-{H}opf bifurcations.
\newblock {\em J. Theoret. Biol.}, 245(2):220--229, 2007.

\bibitem{Brjuno}
A.D. Brjuno.
\newblock Analytic form of differential equations. {I}, {II}.
\newblock {\em Trudy Moskov. Mat. Ob\v s\v c.}, 25:119--262; ibid. 26 (1972),
  199--239, 1971.

\bibitem{HuangW1996}
S.~Busenberg and W.Z. Huang.
\newblock Stability and {H}opf bifurcation for a population delay model with
  diffusion effects.
\newblock {\em J. Differential Equations}, 124(1):80--107, 1996.

\bibitem{CampYuan2008}
S.A. Campbell and Y.~Yuan.
\newblock Zero singularities of codimension two and three in delay differential
  equations.
\newblock {\em Nonlinearity}, 21(11):2671--2691, 2008.

\bibitem{CJ2018}
X.~Cao and W.H. Jiang.
\newblock Turing-hopf bifurcation and spatiotemporal patterns in a diffusive
  predator-prey system with crowley-martin functional response.
\newblock {\em It has been accepted for publication in Nonlinear Anal. Real
  World Appl.}, 2018.

\bibitem{Carr1981}
J.~Carr.
\newblock {\em Applications of centre manifold theory}, volume~35 of {\em
  Applied Mathematical Sciences}.
\newblock Springer-Verlag, New York-Berlin, 1981.

\bibitem{ChenS2012}
S.S. Chen and J.P. Shi.
\newblock Stability and {H}opf bifurcation in a diffusive logistic population
  model with nonlocal delay effect.
\newblock {\em J. Differential Equations}, 253(12):3440--3470, 2012.

\bibitem{Chen2013JNS}
S.S. Chen, J.P. Shi, and J.J. Wei.
\newblock Time delay-induced instabilities and {H}opf bifurcations in general
  reaction-diffusion systems.
\newblock {\em J. Nonlinear Sci.}, 23(1):1--38, 2013.

\bibitem{Chow1982}
S.N. Chow and J.K. Hale.
\newblock {\em Methods of Bifurcation Theory}, volume 251 of {\em Grundlehren
  der Mathematischen Wissenschaften [Fundamental Principles of Mathematical
  Science]}.
\newblock Springer-Verlag, New York-Berlin, 1982.

\bibitem{Chow1994}
S.N. Chow, C.Z. Li, and D.~Wang.
\newblock {\em Normal forms and bifurcation of planar vector fields}.
\newblock Cambridge University Press, Cambridge, 1994.

\bibitem{ChowLu1988}
S.N. Chow and K.N. Lu.
\newblock Invariant manifolds for flows in {B}anach spaces.
\newblock {\em J. Differential Equations}, 74(2):285--317, 1988.

\bibitem{Faria2000}
T.~Faria.
\newblock Normal forms and {Hopf} bifurcation for partial differential
  equations with delays.
\newblock {\em Trans. Amer. Math. Soc.}, 352(5):2217--2238, 2000.

\bibitem{Faria2001}
T.~Faria.
\newblock Normal forms for semilinear functional differential equations in
  {B}anach spaces and applications. {II}.
\newblock {\em Discrete Contin. Dynam. Systems}, 7(1):155--176, 2001.

\bibitem{Faria2002}
T.~Faria, W.Z. Huang, and J.H. Wu.
\newblock Smoothness of center manifolds for maps and formal adjoints for
  semilinear {FDEs} in general {B}anach spaces.
\newblock {\em SIAM J. Math. Anal.}, 34(1):173--203, 2002.

\bibitem{Faria2}
T.~Faria and L.T. Magalhaes.
\newblock Normal forms for retarded functional differential equations and
  applications to {B}ogdanov-{T}akens singularity.
\newblock {\em J. Differential Equations}, 122(2):201--224, 1995.

\bibitem{Faria1}
T.~Faria and L.T. Magalhaes.
\newblock Normal forms for retarded functional differential equations with
  parameters and applications to {Hopf} bifurcation.
\newblock {\em J. Differential Equations}, 122(2):181--200, 1995.

\bibitem{Golubitsky1985}
M.~Golubitsky and D.G. Schaeffer.
\newblock {\em Singularities and groups in bifurcation theory. {V}ol. {I}},
  volume~51 of {\em Applied Mathematical Sciences}.
\newblock Springer-Verlag, New York, 1985.

\bibitem{Guck1983}
J.~Guckenheimer and P.~Holmes.
\newblock {\em Nonlinear oscillations, dynamical systems, and bifurcations of
  vector fields}, volume~42 of {\em Applied Mathematical Sciences}.
\newblock Springer-Verlag, New York, 1983.

\bibitem{Guo2016}
S.~J. Guo and L.~Ma.
\newblock Stability and bifurcation in a delayed reaction-diffusion equation
  with {D}irichlet boundary condition.
\newblock {\em J. Nonlinear Sci.}, 26(2):545--580, 2016.

\bibitem{Guo2015}
S.J. Guo.
\newblock Stability and bifurcation in a reaction-diffusion model with nonlocal
  delay effect.
\newblock {\em J. Differential Equations}, 259(4):1409--1448, 2015.

\bibitem{Hadeler2007}
K.P. Hadeler and S.G. Ruan.
\newblock Interaction of diffusion and delay.
\newblock {\em Discrete Contin. Dyn. Syst. Ser. B}, 8(1):95--105, 2007.

\bibitem{Hale1977}
J.K. Hale and S.M. Verduyn~Lunel.
\newblock {\em Introduction to functional-differential equations}, volume~99 of
  {\em Applied Mathematical Sciences}.
\newblock Springer-Verlag, New York, 1993.

\bibitem{Hassard1981}
B.D. Hassard, N.D. Kazarinoff, and Y.H. Wan.
\newblock {\em Theory and applications of {H}opf bifurcation}, volume~41 of
  {\em London Mathematical Society Lecture Note Series}.
\newblock Cambridge University Press, Cambridge-New York, 1981.

\bibitem{Henry}
D.~Henry.
\newblock {\em Geometric theory of semilinear parabolic equations}, volume 840
  of {\em Lecture Notes in Mathematics}.
\newblock Springer-Verlag, Berlin-New York, 1981.

\bibitem{JiangW2010}
W.H. Jiang and H.B. Wang.
\newblock Hopf-transcritical bifurcation in retarded functional differential
  equations.
\newblock {\em Nonlinear Anal.}, 73(11):3626--3640, 2010.

\bibitem{JWC2017}
W.H. Jiang, H.B. Wang, and X.~Cao.
\newblock Turing instability and {T}uring-{H}opf bifurcation in delayed
  reaction-diffusion {S}chnakenberg systems.
\newblock {\em preprint}, 2017.

\bibitem{JiangYuan2007}
W.H. Jiang and Y.~Yuan.
\newblock {Bogdanov-Takens} singularity in {Van der Pol's} oscillator with
  delayed feedback.
\newblock {\em Phys. D}, 227(2):149--161, 2007.

\bibitem{KondoAsai1995}
S.~Kondo and R.~Asai.
\newblock A reaction-diffusion wave on the skin of the marine angelfish
  pomacanthus.
\newblock {\em Nature}, 376(6543):765--768, 1995.

\bibitem{kondo2010reaction}
S.~Kondo and T.~Miura.
\newblock Reaction-diffusion model as a framework for understanding biological
  pattern formation.
\newblock {\em Science}, 329(5999):1616--1620, 2010.

\bibitem{lengyel1991modeling}
I.~Lengyel and I.~R. Epstein.
\newblock Modeling of {T}uring structures in the chlorite-iodide-malonic
  acid-starch reaction system.
\newblock {\em Science}, 251(4994):650--652, 1991.

\bibitem{Lin1992}
X.~Lin, J.W.H. So, and J.H. Wu.
\newblock Centre manifolds for partial differential equations with delays.
\newblock {\em Proc. Roy. Soc. Edinburgh}, 122(3-4):237--254, 1992.

\bibitem{Liu2011}
Z.H. Liu, P.~Magal, and S.G. Ruan.
\newblock Hopf bifurcation for non-densely defined {C}auchy problems.
\newblock {\em Z. Angew. Math. Phys.}, 62(2):191--222, 2011.

\bibitem{Liu2014}
Z.H. Liu, P.~Magal, and S.G. Ruan.
\newblock Normal forms for semilinear equations with non-dense domain with
  applications to age structured models.
\newblock {\em J. Differential Equations}, 257(4):921--1011, 2014.

\bibitem{Liu2016}
Z.H. Liu, P.~Magal, and D.M. Xiao.
\newblock Bogdanov-{T}akens bifurcation in a predator-prey model.
\newblock {\em Z. Angew. Math. Phys.}, 67(6):Art. 137, 29, 2016.

\bibitem{Ruan2009}
P.~Magal and S.G. Ruan.
\newblock Center manifolds for semilinear equations with non-dense domain and
  applications to {H}opf bifurcation in age structured models.
\newblock {\em Mem. Amer. Math. Soc.}, 202(951):vi+71, 2009.

\bibitem{Maini1997}
P.K. Maini, K.J. Painter, and H.N.P. Chau.
\newblock Spatial pattern formation in chemical and biological systems.
\newblock {\em J. Chem. Soc. Faraday Trans.}, 93(20):3601--3610, 1997.

\bibitem{Marsden1976}
J.~E. Marsden and M.~McCracken.
\newblock {\em The {H}opf bifurcation and its applications}.
\newblock Springer-Verlag, New York, 1976.
\newblock With contributions by P. Chernoff, G. Childs, S. Chow, J. R. Dorroh,
  J. Guckenheimer, L. Howard, N. Kopell, O. Lanford, J. Mallet-Paret, G. Oster,
  O. Ruiz, S. Schecter, D. Schmidt and S. Smale, Applied Mathematical Sciences,
  Vol. 19.

\bibitem{DeWit1997}
M.~Meixner, A.~De~Wit, S.~Bose, and E.~Sch\"oll.
\newblock Generic spatiotemporal dynamics near codimension-two {T}uring-{H}opf
  bifurcations.
\newblock {\em Phys. Rev. E (3)}, 55(6, part A):6690--6697, 1997.

\bibitem{muller2012differential}
Patrick M{\"u}ller et~al.
\newblock Differential diffusivity of {N}odal and {L}efty underlies a
  reaction-diffusion patterning system.
\newblock {\em Science}, 336(6082):721--724, 2012.

\bibitem{NiuBJiang2013}
B.~Niu and W.H. Jiang.
\newblock Multiple bifurcation analysis in a {NDDE} arising from van der
  {P}ol's equation with extended delay feedback.
\newblock {\em Nonlinear Anal. Real World Appl.}, 14(1):699--717, 2013.

\bibitem{YiFQ2013}
R.~Peng, F.Q. Yi, and X.Q. Zhao.
\newblock Spatiotemporal patterns in a reaction-diffusion model with the
  {Degn-Harrison} reaction scheme.
\newblock {\em J. Differential Equations}, 254(6):2465--2498, 2013.

\bibitem{Ricard2009}
M.~R. Ricard and S.~Mischler.
\newblock Turing instabilities at {H}opf bifurcation.
\newblock {\em J. Nonlinear Sci.}, 19(5):467--496, 2009.

\bibitem{Rovinsky1992}
A.~Rovinsky and M.~Menzinger.
\newblock Interaction of {T}uring and {H}opf bifurcations in chemical systems.
\newblock {\em Phys. Rev. A (3)}, 46(10):6315--6322, 1992.

\bibitem{SER}
S.~Seirin~Lee, E.~A. Gaffney, and R.~E. Baker.
\newblock The dynamics of {T}uring patterns for morphogen-regulated growing
  domains with cellular response delays.
\newblock {\em Bull. Math. Biol.}, 73(11):2527--2551, 2011.

\bibitem{sheth2012hox}
R.~Sheth et~al.
\newblock Hox genes regulate digit patterning by controlling the wavelength of
  a {T}uring-type mechanism.
\newblock {\em Science}, 338(6113):1476--1480, 2012.

\bibitem{Ruan2015}
H.B. Shi and S.G. Ruan.
\newblock Spatial, temporal and spatiotemporal patterns of diffusive
  predator-prey models with mutual interference.
\newblock {\em IMA J. Appl. Math.}, 80(5):1534--1568, 2015.

\bibitem{Shi2009}
J.P. Shi.
\newblock Bifurcation in infinite dimensional spaces and applications in
  spatiotemporal biological and chemical models.
\newblock {\em Front. Math. China}, 4(3):407--424, 2009.

\bibitem{SongY2016}
Y.L. Song, T.H. Zhang, and Y.H. Peng.
\newblock {Turing-Hopf} bifurcation in the reaction-diffusion equations and its
  applications.
\newblock {\em Commun. Nonlinear Sci. Numer. Simul.}, 33:229--258, 2016.

\bibitem{SongYZou2014}
Y.L. Song and X.F. Zou.
\newblock Spatiotemporal dynamics in a diffusive ratio-dependent predator-prey
  model near a {Hopf--Turing} bifurcation point.
\newblock {\em Comput. Math. Appl.}, 67(10):1978--1997, 2014.

\bibitem{Su2009}
Y.~Su, J.J. Wei, and J.P. Shi.
\newblock Hopf bifurcations in a reaction-diffusion population model with delay
  effect.
\newblock {\em J. Differential Equations}, 247(4):1156--1184, 2009.

\bibitem{Su2010}
Y.~Su, J.J. Wei, and J.P. Shi.
\newblock Bifurcation analysis in a delayed diffusive {Nicholson's} blowflies
  equation.
\newblock {\em Nonlinear Anal. Real World Appl.}, 11(3):1692--1703, 2010.

\bibitem{Turing1952}
A.M. Turing.
\newblock The chemical basis of morphogenesis.
\newblock {\em Philos. Trans. Roy. Soc. London Ser. B}, 237(641):37--72, 1952.

\bibitem{Wu2004}
N.~Van~Minh and J.H. Wu.
\newblock Invariant manifolds of partial functional differential equations.
\newblock {\em J. Differential Equations}, 198(2):381--421, 2004.

\bibitem{Vanderbauwhede1989}
A.~Vanderbauwhede.
\newblock Centre manifolds, normal forms and elementary bifurcations.
\newblock In {\em Dynamics reported, {V}ol.\ 2}, volume~2 of {\em Dynam.
  Report. Ser. Dynam. Systems Appl.}, pages 89--169. Wiley, Chichester, 1989.

\bibitem{WangC2008}
C.C. Wang and J.J. Wei.
\newblock Normal forms for {NFDEs} with parameters and application to the
  lossless transmission line.
\newblock {\em Nonlinear Dynamics}, 52(52):199--206, 2008.

\bibitem{WangJ2010}
H.B. Wang and W.H. Jiang.
\newblock Hopf-pitchfork bifurcation in van der {Pol's} oscillator with
  nonlinear delayed feedback.
\newblock {\em J. Math. Anal. Appl.}, 368(1):9--18, 2010.

\bibitem{Wigg1990}
S.~Wiggins.
\newblock {\em Introduction to applied nonlinear dynamical systems and chaos},
  volume~2 of {\em Texts in Applied Mathematics}.
\newblock Springer-Verlag, New York, second edition, 2003.

\bibitem{Holmes1997}
R.W. Wittenberg and P.~Holmes.
\newblock The limited effectiveness of normal forms: A critical review and
  extension of local bifurcation studies of the {B}russelator {PDE}.
\newblock {\em Phys. D}, 100(1-2):1--40, 1997.

\bibitem{Maini2017}
T.E. Woolley, R.E. Baker, and P.K. Maini.
\newblock Turing's theory of morphogenesis: where we started, where we are and
  where we want to go.
\newblock In {\em The incomputable}, Theory Appl. Comput., pages 219--235.
  Springer, Cham, 2017.

\bibitem{Wu1996}
J.H. Wu.
\newblock {\em Theory and applications of partial functional-differential
  equations}, volume 119 of {\em Applied Mathematical Sciences}.
\newblock Springer-Verlag, New York, 1996.

\bibitem{Wu1998}
J.H. Wu.
\newblock Symmetric functional-differential equations and neural networks with
  memory.
\newblock {\em Trans. Amer. Math. Soc.}, 350(12):4799--4838, 1998.

\bibitem{XiaoR2001}
D.M. Xiao and S.G. Ruan.
\newblock Multiple bifurcations in a delayed predator-prey system with
  nonmonotonic functional response.
\newblock {\em J. Differential Equations}, 176(2):494--510, 2001.

\bibitem{YuanXP2010}
X.P. Yan and W.T. Li.
\newblock Stability of bifurcating periodic solutions in a delayed
  reaction-diffusion population model.
\newblock {\em Nonlinearity}, 23(6):1413--1431, 2010.

\bibitem{YGLM}
F.Q. Yi, E.A. Gaffney, and S.~Seirin-Lee.
\newblock The bifurcation analysis of {T}uring pattern formation induced by
  delay and diffusion in the {S}chnakenberg system.
\newblock {\em Discrete Contin. Dyn. Syst. Ser. B}, 22(2):647--668, 2017.

\bibitem{Yi2010}
F.Q. Yi, J.X. Liu, and J.J. Wei.
\newblock Spatiotemporal pattern formation and multiple bifurcations in a
  diffusive bimolecular model.
\newblock {\em Nonlinear Anal. Real World Appl.}, 11(5):3770--3781, 2010.

\bibitem{Yi2009}
F.Q. Yi, J.J. Wei, and J.P. Shi.
\newblock Bifurcation and spatiotemporal patterns in a homogeneous diffusive
  predator-prey system.
\newblock {\em J. Differential Equations}, 246(5):1944--1977, 2009.

\bibitem{YuanRJiang2015}
R.~Yuan, W.H. Jiang, and Y.~Wang.
\newblock Saddle-node-{H}opf bifurcation in a modified {L}eslie-{G}ower
  predator-prey model with time-delay and prey harvesting.
\newblock {\em J. Math. Anal. Appl.}, 422(2):1072--1090, 2015.

\bibitem{Zuo2011}
W.J. Zuo and J.J. Wei.
\newblock Stability and {H}opf bifurcation in a diffusive predator-prey system
  with delay effect.
\newblock {\em Nonlinear Anal. Real World Appl.}, 12(4):1998--2011, 2011.

\end{thebibliography}

\end{document}